\newcommand{\R}{ \ensuremath{\mathbb{R}}}  
\newcommand{\C}{ \ensuremath{\mathbb{C}}}  
\DeclarePairedDelimiterX\set[1]{\lbrace}{\rbrace}{  #1 }  
\DeclarePairedDelimiterX\norm[1]{\lVert}{\rVert}{#1}  			
\DeclarePairedDelimiterX\inner[2]{\langle}{\rangle}{#1 \,,\, #2}  	
\DeclareMathOperator{\diag}{diag}								
\DeclarePairedDelimiterX\abs[1]{\lvert}{\rvert}{#1} 
\newcommand{\0}{\mathbf{0}}
\newcommand{\Id}{\mathbb I}
\newtheorem{theorem}{Theorem}[section]
\newtheorem{proposition}[theorem]{Proposition}
\newtheorem{corollary}[theorem]{Corollary}
\newtheorem{remark}[theorem]{\textit{Remark}}
\newtheorem{example}[theorem]{\textit{Example}}
\newcommand{\A}{\mathbb{A}}
\newcommand{\eb}{\mathbf{e}}
\newcommand{\X}{\mathbf{X}}
\newcommand{\Y}{\mathbf{Y}}
\newcommand{\f}{\mathbf{f}}
\newcommand{\g}{\mathbf{g}}
\newcommand{\h}{\mathbf{h}}
\newcommand{\x}{\mathbf{x}}
\newcommand{\y}{\mathbf{y}}
\newcommand{\ww}{\mathfrak{w}}
\newcommand{\WW}{\mathbf{W}}
\newcommand{\Vanderm}{\mathbb{V}}
\newcommand{\ii}{\mathfrak{i}}
\newcommand{\roff}{{\boldsymbol\varepsilon}}
\newcommand{\bfalpha}{{\boldsymbol\alpha}}
\newcommand{\bfbeta}{{\boldsymbol\beta}}
\newcommand{\bfrho}{{\boldsymbol\rho}}
\newcommand{\bfLambda}{{\boldsymbol\Lambda}}
\newcommand{\bfnu}{{\boldsymbol\nu}}
\newcommand\restr[2]{{
		\left.\kern-\nulldelimiterspace 
		#1 
		\vphantom{\big|} 
		\right|_{#2} 
	}}
\begin{document}
	
	
	\title[LS reconstruction]{On least squares problems with certain Vandermonde--Khatri--Rao structure with applications to DMD}
	
	
	\author[Drma\v{c}, Mezi\'{c} and Mohr]{Zlatko Drma\v{c}\affil{1}\comma\corrauth and Igor Mezi\'{c}\affil{2} and Ryan Mohr\affil{3}}
	\address{\affilnum{1}\ Faculty of Science, Department of Mathematics, University of Zagreb, 10000 Zagreb, Croatia.\\
	\affilnum{2}\ Department of Mechanical Engineering and Mathematics, University of California, Santa Barbara, CA
	93106, USA \\
	\affilnum{3}\ AIMdyn, Inc., Santa Barbara, CA 93101, USA}
	%
	%
	\emails{{\tt drmac@math.hr} (Z. Drma\v{c}), {\tt mezic@ucsb.edu} (I. Mezi\'{c}), {\tt mohrr@aimdyn.com} (R. Mohr)}
	%
	
\begin{abstract}
This paper proposes a new computational method for solving structured least squares problems that arise in the process of identification of coherent structures in fluid flows. It is deployed in combination with dynamic mode decomposition (DMD) which provides a non-orthogonal set of modes --- corresponding to particular temporal frequencies --- a subset of which is used to represent time snapshots of the underlying dynamics. The coefficients of the representation are determined from a solution of a structured linear least squares problem with the matrix that involves the Khatri--Rao product of a triangular and a Vandermonde matrix. Such a structure allows a very efficient normal equation based least squares solution, which is used in state of the art CFD tools such as the sparsity promoting DMD (DMDSP). A new numerical analysis of the normal equations approach provides insights about its applicability and its limitations. Relevant condition numbers that determine numerical robustness are identified and discussed. Further, the paper offers a corrected semi-normal solution and QR factorization based algorithms. It is shown how to use the Vandermonde--Khatri--Rao structure to efficiently compute the QR factorization of the least squares coefficient matrix, thus providing a new computational tool for the ill-conditioned cases where the normal equations may fail to compute a sufficiently accurate solution. Altogether, the presented material provides a firm numerical linear algebra framework for a class of structured least squares problems arising in a variety of applications.
 \\
\end{abstract}
%
%
	\keywords{coherent structure, dynamic mode decomposition, Khatri--Rao product, Koopman operator, Krylov subspaces, proper orthogonal decomposition, Rayleigh-Ritz approximation, structured least squares, Vandermonde matrix}
%
	\ams{15A12, 15A23, 65F35, 65L05, 65M20, 65M22, 93A15, 93A30, 93B18, 93B40, 93B60, 93C05, 93C10, 93C15, 93C20, 93C57}
%
	\maketitle
	


\section{Introduction}
\noindent Dynamic mode decomposition (DMD) \cite{Schmid2011} has become a tool of trade in computational fluid dynamics (CFD), both in the  high fidelity numerical simulations and in the pure data driven scenarios;
for a review see \cite{Modal-analysis-fluid-flow-overview-2017} and references therein.
Its data driven framework and
deep theoretical roots in the Koopman operator theory  \cite{rowley2009spectral}, \cite{Williams2015}, make DMD a versatile tool in a plethora of applications beyond CFD, e.g. for studying dynamics of infectious diseases \cite{Proctor-DMD-infectious-2015} or for revealing spontaneous subtle emotions on human faces \cite{2016arXiv160104805C} in the field of affective computing. 

A distinctive feature of the DMD is that it is purely data driven. It does not assume knowledge of the solution of the governing, generally nonlinear, equations obeyed by the dynamics under study; it does assume that the supplied data snapshots (vectors of observables) $\f_i\in\mathbb{C}^n$,  are generated by a linear operator $\A$ so that $\f_{i+1}=\A \f_i$, $i=1,\ldots, m$, with some initial $\f_1$, and a constant time lag $\delta t$. One can think of $\A$ as e.g. a black-boxed numerical simulation  software, or e.g. as a discretized physics taken by a high--speed camera such as in studying  flame dynamics and combustion instabilities \cite{Combustion-Inst-Flame-Images-2016}.
%
The spectral data for $\A$ are then inferred by a combination of the proper orthogonal decomposition (POD) and the Rayleigh--Ritz projection. Under certain conditions, DMD can serve as an approximation to the Koopman operator underlying the process evolution \cite{rowley2009spectral,Arbabi:2017fm,Korda:2017jl}. For a good approximation in the sense of Koopman operator, the key is that the set of observables is well-selected and rich enough -- this is a separate issue, not considered in this paper.

The computed Ritz pairs $(\lambda_j, z_j)$ are  used for spatial--temporal decomposition of the snapshots. More precisely, 
it can be shown that, generically, the snapshots can be decomposed using the Ritz vectors $z_j$ (normalized in the $\ell_2$ norm to $\|z_j\|_2=1$) as the spatial spanning set, and with the temporal coefficients provided by the Ritz values $\lambda_j = |\lambda_j| e^{\ii\omega_j \delta t}$:
 \begin{equation}\label{eq:f_i-reconstruct}
 \f_i =  \sum_{j=1}^m z_j \mathfrak{a}_j \lambda_j^{i-1} \equiv 
 \sum_{j=1}^m z_j \mathfrak{a}_j |\lambda_j|^{i-1} e^{\ii\omega_j (i-1)\delta t},\;\;i=1,\ldots , m .
 \end{equation}
In order to identify the most important coherent structures in the dynamic process, the representations \eqref{eq:f_i-reconstruct} are attempted with a smaller number of modes -- the task is to determine $\ell <m$, indices $\varsigma_1 < \cdots <\varsigma_{\ell}$ and the coefficients $\alpha_j$ to achieve high fidelity of the snapshot representations
\begin{equation}\label{eq:f_i-reconstruct-ell}
\f_i \approx  \sum_{j=1}^\ell z_{\varsigma_j} \alpha_j \lambda_{\varsigma_j}^{i-1} ,\;\;i=1,\ldots , m .
\end{equation}
A solution of this sparse reconstruction problem is proposed in \cite{jovschnicPOF14} as a combination of compressed sensing and convex optimization -- the resulting sparsity promoting DMD (DMDSP) has become an important addition to the DMD technology. One of the key numerical steps in DMDSP is solution of a structured least squares problem for the coefficients $\alpha_j$ in (\ref{eq:f_i-reconstruct-ell}) via the normal equations. 

From the numerical point of view, using normal equations for least squares solution is an ill-advised approach because the classical spectral condition number of the problem is squared. We examine this issue,  provide a floating point error analysis and analyze condition numbers that determine sensitivity of the problem. This allows us to explain good results obtained by DMDSP, and to determine the limits for applicability of the normal equations approach. 
To ensure robustness of the least squares representation (\ref{eq:f_i-reconstruct-ell}), we develop a QR factorization based algorithm. Together with detailed descriptions and numerical analysis of the new proposed algorithms, we provide guidelines for numerical software development.  

The material is organized as follows. In \S \ref{S=Preliminaries}, we briefly review the DMD and in \S \ref{zd:SS=Reconstr-selected} we state the problem of snapshot reconstruction using the selected modes; the sparsity promoting DMD (DMDSP) is reviewed in \S \ref{SS=DMDSP}. In \S \ref{S=Structure-sensitivity} we first formulate the reconstruction problem in more generality by allowing weighted reconstruction error -- each snapshot carries a weight that determines its importance in the overall error. Then, in \S \ref{SS=Normal_Eq_solution}, the explicit formulas for the normal equations solutions are given for this weighted error. In \S \ref{SS=DMDSP-improved} we propose a modification of the DMDSP -- its polishing phase can be considerably improved by replacing the variation of the Lagrangian technique with deployment of certain canonical projections. Numerical properties of the normal equations approach are studied in detail in \S \ref{S=Normal-Sensitivity}.  We identify the relevant condition number and argue that the underlying Khatri--Rao product structure of the normal equations usually ensures numerical accuracy despite potentially high condition numbers of both the Vandermonde matrix of Ritz values and of the Ritz vectors. To the best of our knowledge, this aspect of the DMDSP has not been considered before. 

However, normal equations have its limitations, and we illustrate this by contrived small dimensional examples. This motivates the development in \S \ref{SS=QR-LS-solver}, where we consider solution methods via the QR factorization -- the corrected seminormal solution in \S \ref{SS=Corrected-Semi-Normal} and the pure QR based procedure. The key technical difficulty is the computation of the QR factorization of the least squares matrix; this is resolved in \S \ref{SS=QRF(S)} where we propose an efficient recursive scheme that exploits the Khatri--Rao product structure involving a Vandermonde matrix. Fine details such as using only real arithmetic in the case of real snapshots are discussed in \S \ref{SSS=Real-data-intro}.
Finally, in \S \ref{S=Matlab-sources} we provide Matlab source codes for some of the presented algorithms. 
\section{Preliminaries}\label{S=Preliminaries}
\noindent For the reader's convenience, and to introduce the notation, we briefly review the DMD, the snapshot reconstruction task (\ref{eq:f_i-reconstruct}, \ref{eq:f_i-reconstruct-ell}), and the DMDSP.
For a more extensive study of DMD and its variations and applications, see e.g. 
\cite{schmid2010}, \cite{Chen:2012jh}, 
\cite{2015arXiv150203854H}, \cite{Dawson2016}, 
\cite{Hemati:2014jm}.

\subsection{DMD and its variations}
\noindent The DMD \cite{schmid2010} is outlined in Algorithm \ref{zd:ALG:DMD}.
\begin{algorithm}[H]
	\caption{{$[Z_k, \Lambda_k]=\mathrm{DMD}(\X_m,\Y_m)$}}
	\label{zd:ALG:DMD}
	\begin{algorithmic}[1]
		\REQUIRE \  \\		
		\begin{itemize} 
			\item $\X_m=(\x_1,\ldots,\x_m), \Y_m=(\y_1,\ldots,\y_m)\in \mathbb{C}^{n\times m}$ that define a sequence of snapshots pairs $(\x_i,\y_i\equiv \A \x_i)$. (Tacit assumption is that $n$ is large and that $m \ll n$.)
		\end{itemize}
		\STATE $[U,\Sigma, \Phi]=svd(\X_m)$ ; \COMMENT{\emph{The thin SVD: $\X_m = U \Sigma \Phi^*$, $U\in\mathbb{C}^{n\times m}$, $\Sigma=\mathrm{diag}(\sigma_i)_{i=1}^m$}}
		\STATE Determine numerical rank $k$.
		\STATE Set $U_k=U(:,1:k)$, $\Phi_k=\Phi(:,1:k)$, $\Sigma_k=\Sigma(1:k,1:k)$ 	
		\STATE ${S}_k = (({U}_k^* \Y_m) \Phi_k)\Sigma_k^{-1}$; \COMMENT{\emph{Schmid's formula for the Rayleigh quotient $U_k^* \A U_k$}}
		\STATE $[B_k, \Lambda_k] = \mathrm{eig}(S_k)$ \COMMENT{$\Lambda_k=\mathrm{diag}(\lambda_j)_{j=1}^k$; $S_k B_k(:,j)=\lambda_j B_k(:,j)$; $\|B_k(:,j)\|_2=1$}\\
		\COMMENT{\emph{We always assume the generic case that $S_k$ is diagonalizable, i.e. that in Line 5. the function \texttt{eig()} computes the full column rank matrix $B_k$ of the eigenvectors of $S_k$. In general, $B_k$ may be ill-conditioned.}}
		\STATE $Z_k = U_k B_k$ \COMMENT{\emph{Ritz vectors}}
		\ENSURE $Z_k$, $\Lambda_k$
	\end{algorithmic}
\end{algorithm}

\subsubsection{DDMD RRR}\label{SS=DDMD-RRR}
Recently, in \cite{DDMD-SISC-2018}, we proposed  an enhancement of the Algorithm \ref{zd:ALG:DMD} -- Refined Rayleigh Ritz Data Driven Modal Decomposition. It follows the DMD scheme, but it further allows data driven refinement of the Ritz vectors and it provides data driven computable residuals. For the reader's convenience, the method is outlined in Algorithm \ref{zd:ALG:DMD:RRR}; for detailed description and analysis, and a Matlab implementation we refer to \cite{DDMD-SISC-2018}.

\begin{algorithm}[h]
	\caption{$[Z_k, \Lambda_k, r_k, \rho_k]=\mathrm{DDMD\_RRR}(\X_m,\Y_m; \epsilon)$ \{\emph{Refined Rayleigh-Ritz DDMD}\}}
	\label{zd:ALG:DMD:RRR}
	\begin{algorithmic}[1]
		\REQUIRE \  \\		
		\begin{itemize} 
			\item $\X_m=(\x_1,\ldots,\x_m), \Y_m=(\y_1,\ldots,\y_m)\in \mathbb{C}^{n\times m}$ that define a sequence of snapshots pairs $(\x_i,\y_i\equiv \A(\x_i))$. (Tacit assumption is that $n$ is large and that $m \ll n$.)
			\item Tolerance level $\epsilon$ for numerical rank determination.		
		\end{itemize}
		\STATE $D_x=\mathrm{diag}(\|\X_m(:,i)\|_2)_{i=1}^m$; $\X_m^{(1)}= \X_m D_x^{\dagger}$; $\Y_m^{(1)} =\Y_m D_x^{\dagger}$\COMMENT{\emph{Optional scaling.}}
		\STATE $[U,\Sigma, V]=svd(\X_m^{(1)})$ ; \COMMENT{\emph{The thin SVD: $\X_m^{(1)} = U \Sigma V^*$, $U\in\mathbb{C}^{n\times m}$, $\Sigma=\mathrm{diag}(\sigma_i)_{i=1}^m$}}
		\STATE Determine numerical rank $k$, with the threshold $\epsilon$: $k = \max\{ i \; : \; \sigma_i \geq \sigma_1 \epsilon \}$.
		\STATE Set $U_k=U(:,1:k)$, $V_k=V(:,1:k)$, $\Sigma_k=\Sigma(1:k,1:k)$ 	
		\STATE ${B}_k = \Y_m^{(1)} (V_k\Sigma_k^{-1})$; \COMMENT{\emph{Schmid's data driven formula for $\A U_k$}}
		\STATE $[Q, R]=qr(\begin{pmatrix} U_k, & B_k\end{pmatrix})$; \COMMENT{\emph{The thin QR factorization: $\begin{pmatrix} U_k, & B_k\end{pmatrix} = QR$; $Q$ not computed}}
		\STATE $S_k = \mathrm{diag}(\overline{R_{ii}})_{i=1}^k R(1:k,k+1:2k)$ \COMMENT{\emph{$S_k = U_k^* \A U_k$ is the Rayleigh quotient}}
		\STATE $\Lambda_k = \mathrm{eig}(S_k)$ \COMMENT{$\Lambda_k=\mathrm{diag}(\lambda_i)_{i=1}^k$; \emph{Ritz values, i.e. eigenvalues of $S_k$}}
		\FOR{$i=1,\ldots, k$}
		\STATE $R_{\lambda_i} = \left( \begin{smallmatrix} R(1:k,k+1:2k)-\lambda_i R(1:k,1:k)\cr R(k+1:2k,k+1:2k) \end{smallmatrix}\right)$
		\STATE $[\sigma_{\lambda_i},w_{\lambda_i} ]=svd_{\min}(R_{\lambda_i})$;
				 \COMMENT{\emph{Minimal singular value of $R_{\lambda_i}$ and the corresponding right singular vector}}
		\STATE $W_k(:,i)=w_{\lambda_i}$ ; 
		$r_k(i) = \sigma_{\lambda_i}$ \COMMENT{\emph{Optimal residual, $\sigma_{\lambda_i}=\|R_{\lambda_i} w_{\lambda_i}\|_2$}}
		\STATE $\rho_k(i)=w_{\lambda_i}^* S_k w_{\lambda_i}$ \COMMENT{\emph{Rayleigh quotient, $\rho_k(i)= (U_k w_{\lambda_i})^* \A (U_k w_{\lambda_i})$}}
		\ENDFOR
		\STATE $Z_k = U_k W_k$ \COMMENT{\emph{Refined Ritz vectors}}
		\ENSURE $Z_k$, $\Lambda_k$, $r_k$, $\rho_k$
	\end{algorithmic}
\end{algorithm}

\subsection{Sparse reconstruction using selected Ritz pairs}\label{zd:SS=Reconstr-selected}
\noindent
In matrix form, (\ref{eq:f_i-reconstruct}) is compactly written as
\begin{equation}\label{eq:reconstruction-formula}
\X_m = \begin{pmatrix} z_1 & z_2 & \ldots & z_m \end{pmatrix} \begin{pmatrix} 
{\mathfrak{a}}_1 &  &  & \cr
& {\mathfrak{a}}_2 &  &   \cr 
&  & \ddots &        \cr 
&        &  & {\mathfrak{a}}_m\end{pmatrix}
\begin{pmatrix} 
1 & \lambda_1 & \ldots & \lambda_1^{m-1} \cr
1 & \lambda_2 & \ldots & \lambda_2^{m-1} \cr
\vdots & \vdots & \ldots & \vdots \cr
1 & \lambda_m & \ldots & \lambda_m^{m-1} \cr
\end{pmatrix} ,
\end{equation}
where the coefficients $\mathfrak{a}_j$ are computed as
\begin{equation}\label{zd:eq:DMD-aplitudes}
({\mathfrak{a}}_j)_{j=1}^m = Z_m^\dagger\X_m(:,1).
\end{equation}
In the framework of the Schmid's DMD, the amplitudes are usually determined by the formula (\ref{zd:eq:DMD-aplitudes}). Since $Z_m = U_m B_m$ (see line 6. in Algorithm \ref{zd:ALG:DMD}) and $U_m^* U_m=\Id_m$, instead of applying the pseudoinverse of the explicitly computed $Z_m$, one would use the more efficient formula
\begin{equation}
Z_m^\dagger \X_m(:,1) = B_m^{-1}(U_m^* \X_m(:,1)).
\end{equation}
For a formal proof of the existence of the representation (\ref{eq:reconstruction-formula}) and relation to the formulation  via the Krylov decomposition and the Frobenius companion matrix, we refer to \cite{AIMdyn-Vand-Cauchy-2018}.


The main goal of the representations (\ref{eq:f_i-reconstruct}), (\ref{eq:reconstruction-formula}) is revealing the  underlying structure in the snapshots.
It is best achieved with an approximate representation using as few as possible Ritz pairs (i.e. approximate eigenpairs). Ideally, the reconstruction is successful with small number of Ritz vectors that have small residuals (i.e. corresponding to selected eigenvectors of the underlying operator that is accessible only through the sequence of snapshots $\f_i$). 

If we desire to take only $\ell<m$ most relevant Ritz pairs, then the question is how to determine $\ell$ and what indices $\varsigma_1, \ldots, \varsigma_\ell$ should be selected to achieve good approximation
\begin{equation}\label{eq:reconstruction-formula-ell}
\X_m \approx \begin{pmatrix} z_{\varsigma_1} & z_{\varsigma_2} & \ldots & z_{\varsigma_{\ell}} \end{pmatrix} \begin{pmatrix} 
{\mathfrak{a}}_{\varsigma_1} &  &  & \cr
& {\mathfrak{a}}_{\varsigma_2} &  &   \cr 
&  & \ddots &        \cr 
&        &  & {\mathfrak{a}}_{\varsigma_\ell}\end{pmatrix}
\begin{pmatrix} 
1 & \lambda_{\varsigma_1} & \ldots & \lambda_{\varsigma_1}^{m-1} \cr
1 & \lambda_{\varsigma_2} & \ldots & \lambda_{\varsigma_2}^{m-1} \cr
\vdots & \vdots & \ldots & \vdots \cr
1 & \lambda_{\varsigma_\ell} & \ldots & \lambda_{\varsigma_\ell}^{m-1} \cr
\end{pmatrix} \equiv Z_{\varsigma} D_{{\mathfrak{a}}} \Vanderm_{\varsigma} .
\end{equation}

This seems difficult task for practical computation. In general, here we assume the availability of a \emph{reconstruction wizard} that selects $z_{\varsigma_1},\ldots, z_{\varsigma_{\ell}}$ so that in (\ref{eq:reconstruction-formula-ell}) the reconstruction error $\| \X_m - Z_{\varsigma} D_{{\mathfrak{a}}} \Vanderm_{\varsigma} \|_F^2$ is as small as possible. An example of such a wizard is an optimizer with (relaxed) sparsity constraints, e.g the ADMM (Alternating Directions Method of Multipliers) which is used in the sparsity promoting DMD \cite{jovschnicPOF14}, that we briefly review in \S \ref{SS=DMDSP}. Another strategy, proposed in \cite[\S 3]{KOU2017109}, is to choose modes that are dominant with respect to their influence over all snapshots. 

For any strategy, once the modes are  selected, instead of using the coefficients $\mathfrak{a}_{\varsigma_i}$ from the full reconstruction, one can achieve higher fidelity of (\ref{eq:reconstruction-formula-ell}) by recomputing them by solving a least squares problem with fixed $z_{\varsigma_1},\ldots, z_{\varsigma_{\ell}}$. Here, optionally, we can use data driven  refinements of the selected  Ritz pairs $(z_{\varsigma_j},\lambda_{\varsigma_j})$, see \cite{DDMD-SISC-2018}.

\subsubsection{DMDSP: sparsity promoting DMD}\label{SS=DMDSP}

One way to set up a computational procedure is to formulate it as least squares approximation with sparsity constraints
\begin{equation}\label{eq:LS-sparse-constraint}
\| \X_m - Z_k D_{{\mathfrak{a}}} \Vanderm_{k,m} \|_F^2 + \gamma \|{\mathfrak{a}}\|_0 \longrightarrow \min_{\widetilde{\mathfrak{a}}} ,
\end{equation}
where $\|{\mathfrak{a}}\|_0$ denotes the number of nonzero entries in the vector ${\mathfrak{a}}=(\mathfrak{a}_i)_{i=1}^k$ of the new coefficients; the parameter $\gamma\geq 0$  penalizes non--sparsity of the solution ${\mathfrak{a}}$. The measure of sparsity $\|{\mathfrak{a}}\|_0$ is in practical computations relaxed by using the $\ell_1$ norm, $\|{\mathfrak{a}} \|_1$, which turns (\ref{eq:LS-sparse-constraint}) into a convex optimization problem
\begin{equation}\label{eq:LS-sparse-constraint-relaxed}
\| \X_m - Z_k D_{{\mathfrak{a}}} \Vanderm_{k,m} \|_F^2 + \gamma \|{\mathfrak{a}}\|_1 \longrightarrow \min_{{\mathfrak{a}}}.
\end{equation}
In \cite{jovschnicPOF14}, for a given value of $\gamma$, the problem is solved in two steps: 
\begin{enumerate}
	\item (\ref{eq:LS-sparse-constraint-relaxed}) is solved using the ADMM, and the optimal ${\mathfrak{a}}$ is sparsified by setting to zero its entries that are in modulus below a given threshold. Let $j_1,\ldots, j_s$ be the indices of thus annihilated entries -- they define sparsity pattern. Define $E$ to be the matrix whose columns are the columns of the identity with the indices  $j_1,\ldots, j_s$. Then the obtained sparsity pattern of ${\mathfrak{a}}$ can be characterized by $E^T {\mathfrak{a}}=\0$. 
\item Once (\ref{eq:LS-sparse-constraint-relaxed}) has identified the sparsity structure, the coefficients $\mathfrak{a}_i$ are computed by solving the least squares problem with sparsity constraint:
\begin{equation}\label{eq:DMDSP:problem}
\| \X_m - Z_k D_{{\mathfrak{a}}} \Vanderm_{k,m} \|_F^2 \longrightarrow \min_{{\displaystyle {\mathfrak{a}}, E^T{\mathfrak{a}}=\0}}
\end{equation}
In DMDSP terminology, this is the \emph{polishing}  part of the computation.
\end{enumerate}
Since an optimal value of $\gamma$ may be problem dependent, the above two-step procedure is repeated over a grid of (dozens or hundreds) values of $\gamma$.

An advantage of DMDSP is that it is a black-box procedure; the wizard is simply a convex optimization solver. However, it requires suitable range for the parameter $\gamma$, which, to our best understanding, is determined experimentally for each particular problem.\footnote{See e.g. the software \cite{dmdsp-matlab} for test examples in \cite{jovschnicPOF14}.} Further, ADMM uses the augmented Lagrangian that requires an additional penalty parameter $\rho$, which means that the user must supply two parameters (see \cite[\S A.]{jovschnicPOF14}).  

The optimization problem (\ref{eq:DMDSP:problem}) is solved by variation of the Lagrangian (see \cite[Appendix C]{jovschnicPOF14}). This can be done more efficiently and we discuss it in \S \ref{SS=DMDSP-improved}. Further, in the case of real data, the approximant needs to be real as well; this is naturally achieved if the selected modes appear in complex conjugate pairs. It is not clear whether the optimizer in DMDSP is so tuned to respect this structure. The issue of computations with real data is discussed in detail in \S \ref{SSS=Real-data-intro}.

We complete this review with an important observation.
\begin{remark}\label{REM:DMDSP-Compr-Sens}
{\em
The minimizations (\ref{eq:LS-sparse-constraint}), (\ref{eq:LS-sparse-constraint-relaxed}), with the sparsity constraint, look analogous to the compressed sensing problems; in fact \cite{jovschnicPOF14} motivated the development of DMDSP as a combination of compressed sensing and convex optimization techniques. It should be noted, however, that the snapshot reconstruction problem (\ref{eq:reconstruction-formula-ell}) is heavily overdetermined and generically with unique solution, while in the compressed sensing framework one has an underestimated least squares problem and the sparsity constraint is imposed over a manifold of solutions. These are two fundamentally different situations. We further comment this issue in \S \ref{SSS=Pivoting-important}.
}	
\end{remark}

\section{Snapshot reconstruction: structure, sensitivity and condition numbers}\label{S=Structure-sensitivity}
\noindent Suppose we have selected $\ell$ modes, re-indexed so that they are the leading ones, $1,\ldots,\ell$, and that we seek an approximate snapshot reconstruction
\begin{equation}\label{eq:reconstruct-ell}
\f_i \approx \sum_{j=1}^\ell z_j \alpha_j \lambda_j^{i-1},\;\;i=1,\ldots, m.
\end{equation}
 The pairs $(\lambda_j,z_j)$ are approximate eigenpairs of $\A$, e.g. the Ritz pairs. In terms of Algorithm \ref{zd:ALG:DMD} or Algorithm \ref{zd:ALG:DMD:RRR}, $Z_{\ell}\equiv \begin{pmatrix} z_1 & \ldots & z_{\ell}\end{pmatrix}=U_k \widetilde{B}_\ell$ with some $k\times \ell$ matrix $\widetilde{B}_\ell$, and $\ell\leq k\leq \min(m,n)$. 
 
 
 In general, selection of modes will be determined by a wizard, aimed at achieving certain goals; an example is the DMDSP, reviewed in \S \ref{SS=DMDSP}. 
 But once the $(\lambda_j,z_j)$'s have been selected, we may ask whether the coefficients $\alpha_j={\mathfrak{a}}_j$ can be recomputed so that the reconstruction (\ref{eq:reconstruct-ell}) is improved over all snapshots. This is e.g. the polishing phase (\ref{eq:DMDSP:problem}) in DMDSP.
%

\subsection{Setting the scene}
\noindent The task is to determine, for given $(\lambda_j,z_j)$'s and nonnegative weights $\ww_i$,  the $\alpha_j$'s to achieve
\begin{equation}\label{eq:rec-error-min}
\sum_{i=1}^m \ww_i^2 \| \f_i - \sum_{j=1}^\ell z_j \alpha_j \lambda_j^{i-1}\|_2^2 \longrightarrow \min .
\end{equation}
Set $\WW=\diag(\ww_i)_{i=1}^m$.  The weights $\ww_i>0$ are used to emphasize snapshots whose reconstruction is more important; with suitable choices of $\WW$ we can target discrete time subintervals of particular interest, or e.g. introduce forgetting factors that give less importance to older information. We believe that this added functionality will  prove useful in applications and the non-autonomous setting. 
%

Here we use the $\|\cdot\|_2$ norm as the most common choice; if one wants to distinguish the importance of different observables (that might be of different physical nature, expressed in different units, on different scales and with  different levels of uncertainty), then $\|\cdot \|_2$ can be replaced with an energy norm $\| \sqrt{\Omega} \cdot\|_2$, where $\Omega$ is positive definite. (If $\Omega$ is not diagonal, then $\sqrt{\Omega}$ stands for the upper triangular Cholesky factor.) This important modification adds one new level of technical details, and it is omitted in this paper. For a detailed analysis of DMD in the $\Omega$--induced inner product we refer the reader to \cite{DDMD-SISC-2018}.

To introduce all necessary notation, define $\bfLambda=\mathrm{diag}(\lambda_j)_{j=1}^{\ell}$,
$$
\vec{\bfalpha} = \begin{pmatrix} \alpha_1 \cr \alpha_2 \cr \cdot\cr \alpha_{\ell}\end{pmatrix},\;\;
\Delta_{\bfalpha} = \begin{pmatrix} \alpha_1 & 0 & \cdot & 0 \cr 
0 & \alpha_2 & \cdot & \cdot \cr
\cdot & \cdot & \cdot & 0 \cr
0 & \cdot & 0 & \alpha_{\ell}\end{pmatrix},\;\;
\Lambda_i = \begin{pmatrix} \lambda_1^{i-1}\cr \lambda_2^{i-1} \cr \cdot \cr \lambda_{\ell}^{i-1} \end{pmatrix},\;\;
\Delta_{\Lambda_i} = \begin{pmatrix} \lambda_1^{i-1} & 0 & \cdot & 0 \cr 
0 & \lambda_2^{i-1} & \cdot & \cdot \cr
\cdot & \cdot & \cdot & 0 \cr
0 & \cdot & 0 & \lambda_{\ell}^{i-1}\end{pmatrix}\equiv\bfLambda^{i-1} ,
$$
and write the objective (\ref{eq:rec-error-min}) as the function of $\vec{\bfalpha}$,
\begin{equation}\label{eq:Omega(alpha)}
\Omega^2(\vec{\bfalpha}) \equiv \| [ \X_m - Z_{\ell} \Delta_{\bfalpha} \begin{pmatrix} \Lambda_1 & \Lambda_2 & \ldots & \Lambda_{m}\end{pmatrix}]\WW \|_F^2 \longrightarrow\min,
\end{equation}
where
\begin{equation}\label{eq:Vanderm_ell_m}
\begin{pmatrix} \Lambda_1 & \Lambda_2 & \ldots & \Lambda_{m}\end{pmatrix} = 
\begin{pmatrix} 
1 & \lambda_1 & \ldots & \lambda_1^{m-1} \cr
1 & \lambda_2 & \ldots & \lambda_2^{m-1} \cr
\vdots & \vdots & \ldots & \vdots \cr
1 & \lambda_\ell & \ldots & \lambda_\ell^{m-1} \cr
\end{pmatrix} \equiv \Vanderm_{\ell,m}\in\C^{\ell\times m} .
\end{equation}
Schematically, we have, assuming $n > m$, 
\begin{displaymath}
\underbrace{\begin{pmatrix}
	x & x & x & x & x \cr
	x & x & x & x & x \cr
	x & x & x & x & x \cr
	x & x & x & x & x \cr
	x & x & x & x & x \cr
	x & x & x & x & x \cr
	x & x & x & x & x \cr
	x & x & x & x & x \cr
	\end{pmatrix}}_{\X_m} \approx
\underbrace{\begin{pmatrix}
	\bullet & \bullet \cr
	\bullet & \bullet \cr
	\bullet & \bullet \cr
	\bullet & \bullet \cr
	\bullet & \bullet \cr
	\bullet & \bullet \cr
	\bullet & \bullet \cr
	\bullet & \bullet \cr
	\end{pmatrix}}_{Z_{\ell}}
\overbrace{\begin{pmatrix} \star & 0 \cr 0 & \star\end{pmatrix}}^{\Delta_{\bfalpha}}
\underbrace{\begin{pmatrix}
	+ & + & + & + & +  \cr
	+ & + & + & + & +  
	\end{pmatrix}}_{\begin{pmatrix} \Lambda_1 & \Lambda_2 & \ldots & \Lambda_{m}\end{pmatrix}} .
\end{displaymath}
%
Since $\X_m=(\f_1,\ldots, \f_m)$, we can rewrite the above expression as follows:
\begin{eqnarray}
	\Omega^2(\vec{\bfalpha}) &=& 	\| \left[\begin{pmatrix} \f_1 & \f_2 & \ldots & \f_m\end{pmatrix} - Z_{\ell} \Delta_{\bfalpha} \begin{pmatrix} \Lambda_1 & \Lambda_2 & \ldots & \Lambda_{m}\end{pmatrix}\right] \WW\|_F^2 \nonumber \\ &=&  \| (\WW \otimes \Id_n)\left[\begin{pmatrix}  \f_1 \cr \vdots \cr  \f_m\end{pmatrix} - \begin{pmatrix}  Z_{\ell}\Delta_{\bfalpha}\Lambda_1 \cr \vdots \cr  Z_{\ell}\Delta_{\bfalpha}\Lambda_m\end{pmatrix}\right]\|_2^2 
	= \| \begin{pmatrix} \ww_1 \f_1 \cr \vdots \cr \ww_m \f_m\end{pmatrix} -  \begin{pmatrix} \ww_1 Z_{\ell}\Delta_{\Lambda_1} \cr \vdots \cr \ww_m Z_{\ell}\Delta_{\Lambda_m}\end{pmatrix} \vec{\bfalpha}\|_2^2 . \label{eq:Omega:f-ZDa}
\end{eqnarray}
%
In principle, (\ref{eq:Omega:f-ZDa}) is a standard linear least squares problem that can be solved using an off-the-shelf solver for the generic "$\|Ax-b\|_2\rightarrow\min$" problem. This is certainly not optimal (here we assume $n\gg m > \ell$) because it ignores the particular structure of the  coefficient matrix, which is of dimensions $mn \times \ell$, not sparse, and the direct solver complexity is $O(mn\ell^2)$ flops. Further, to understand the numerical accuracy of the computed approximations, and to identify relevant condition numbers, we need to take into account the structure that involves the Ritz pars $(z_j,\lambda_j)$. These issues are important and in this section we provide the details.
 
The first step is to remove the ambient space dimension $n$ and to replace it with $\ell$. To that end, let $Z_{\ell}=Q\left(\begin{smallmatrix} R \cr 0\end{smallmatrix}\right)$ be the QR factorization. Note that $R$ is of full rank.

Then, using the unitary invariance of the Euclidean norm, the above can be written as
\begin{equation}\label{eq:Omega:g-RDa}
\Omega^2(\vec{\bfalpha}) = \| \begin{pmatrix} \ww_1 Q^* \f_1 \cr \vdots \cr \ww_m Q^*\f_m\end{pmatrix} - \begin{pmatrix} 
\ww_1 \left(\begin{smallmatrix} R \cr 0\end{smallmatrix}\right) \Delta_{\Lambda_1} \cr \vdots \cr \ww_m \left(\begin{smallmatrix} R \cr 0\end{smallmatrix}\right) \Delta_{\Lambda_m}\end{pmatrix} \vec{\bfalpha}\|_2^2 .
\end{equation}
Now partition each $Q^*\f_i$ as $Q^*\f_i = \left( \begin{smallmatrix} \g_i \cr \h_i\end{smallmatrix}\right)$ where $\g_i$ is $\ell\times 1$ and $\h_i$ s $(n-\ell)\times 1$.
The objective function becomes
\begin{eqnarray}\label{eq:Omega(alpha)}
\Omega^2(\vec{\bfalpha}) &=& \| (\WW\otimes \Id_{\ell}) \left[\begin{pmatrix} \g_1 \cr \vdots \cr \g_m\end{pmatrix} - \begin{pmatrix} 
R \Delta_{\Lambda_1} \cr \vdots \cr  R  \Delta_{\Lambda_m}\end{pmatrix} \vec{\bfalpha} \right]\|_2^2 + \sum_{i=1}^m \ww_i^2 \| \h_i\|_2^2 \\
&\equiv&  \|(\WW\otimes \Id_{\ell}) \left[\begin{pmatrix} \g_1 \cr \vdots \cr \g_m\end{pmatrix} -
(\Id_m\otimes R) \begin{pmatrix} 
\Delta_{\Lambda_1} \cr \vdots \cr  \Delta_{\Lambda_m}\end{pmatrix} \vec{\bfalpha}\right]\|_2^2 + \sum_{i=1}^m \ww_i^2 \| \h_i\|_2^2 \\
&\equiv&  \|\begin{pmatrix} \ww_1 \g_1 \cr \vdots \cr \ww_m \g_m\end{pmatrix} -
(\WW\otimes R) \begin{pmatrix} 
\Delta_{\Lambda_1} \cr \vdots \cr  \Delta_{\Lambda_m}\end{pmatrix} \vec{\bfalpha}\|_2^2 + \sum_{i=1}^m \ww_i^2 \| \h_i\|_2^2 ,
\end{eqnarray}
where $\otimes$ denotes the Kronecker product.
Note that $\h_i$ is the component of the corresponding $\f_i$ that is orthogonal to the range of $Z_{\ell}$, thus beyond the reach of optimization with respect to $\vec{\bfalpha}$. Hence, we actually need the economy-size QR factorization $Z_{\ell}=Q(:,1:\ell) R$; $\g_i=Q(:,1:\ell)^*\f_i$. 

\begin{remark}\label{REM:POD-reconstruct}
	{\em Since in a DMD framework  $Z_{\ell}=U_k \widetilde{B}_{\ell}$, $\widetilde{B}_{\ell}\in\C^{k\times\ell}$, the factorization of $Z_{\ell}$ is obtained from the QR factorization $\widetilde{B}_{\ell}=Q' R$, where by the essential uniqueness $Q(:,1:\ell)=U_k Q'$.
(Even if the column of $Z_\ell$ are the refined Ritz vectors, they have a representation of the form $Z_{\ell}=U_k \widetilde{B}_{\ell}$, only with a different matrix $\widetilde{B}_{\ell}$.) Hence, the QR factorization of $Z_{\ell}$ is not necessarily computed explicitly from the explicitly computed $Z_{\ell}$; the more economic way is using the QR factorization of $\widetilde{B}_{\ell}$. 

Further, if $\X_m = U_m \Sigma_m V_m^*$ is the thin SVD of $\X_m$ then
\begin{eqnarray*}
\Omega^2(\vec{\bfalpha}) &=& 	\| \left[ U_m\Sigma_m V_m^* - U_k \widetilde{B}_{\ell} \Delta_{\bfalpha} \begin{pmatrix} \Lambda_1 & \Lambda_2 & \ldots & \Lambda_{m}\end{pmatrix} \right] \WW \|_F^2 \;\;\;\mbox{(since $U_k=U_m U_m^* U_k$)}\\
&=& \| \left[ \Sigma_m V_m^* - \begin{pmatrix} \widetilde{B}_{\ell} \cr \0_{m-k,\ell}\end{pmatrix}\Delta_{\bfalpha} \begin{pmatrix} \Lambda_1 & \Lambda_2 & \ldots & \Lambda_{m}\end{pmatrix} \right] \WW \|_F^2 \\ &=& 
\| \left[ \Sigma_k V_k^* - \widetilde{B}_{\ell} \Delta_{\bfalpha} \begin{pmatrix} \Lambda_1 & \Lambda_2 & \ldots & \Lambda_{m}\end{pmatrix}\right]\WW \|_F^2 + \sum_{j=k+1}^m \sigma_j^2 \|V_m^*(j,:)\WW\|_2^2 ,
\end{eqnarray*}
and (as in the unweighted case considered in \cite{jovschnicPOF14}) the reconstruction is done  with the projections of the snapshots onto the span of the leading $k$ left singular vectors (principal components). In the corresponding basis, the snapshots are the columns of $\Sigma_k V_k^*$ ($\f_i\equiv (\Sigma_k V_k^*)(:,i)$), the Ritz vectors are in $\widetilde{B}_{\ell}$ ($Z_\ell\equiv \widetilde{B}_{\ell}$), and we may proceed with the QR factorization $\widetilde{B}_{\ell}=Q' R$, thus removing the large dimension $n$ in the very first step. This is included as a special case in the generic description (\ref{eq:Omega:f-ZDa} -- \ref{eq:Omega(alpha)}) which appears in other applications (outside the DMD framework; see e.g. \cite{lev-ari2005}) and it is thus preferred as a general form of the structured LS problem. 
}
\end{remark}

\begin{remark}
{\em
An efficient alternative approach to reducing the dimension is the QR--compressed scheme \cite[\S 5.2.1]{DDMD-SISC-2018}, which replaces the ambient space dimension $n$ with $m+1$ even before the DMD computation. 	
In this projected framework, the QR factorization of $Z_{\ell}$ then reduces the dimension from $m+1$ to $\ell$. This has been successfully used in \cite{AIMdyn-Vand-Cauchy-2018}, where $Z_{\ell}$ is computed directly (without the DMD) from the eigenvectors of the companion matrix (inverse of the Vandermonde matrix from (\ref{eq:reconstruction-formula})).
}	
\end{remark}
\subsection{Structure of the LS problem and normal equations solution}
\label{SS=Normal_Eq_solution}
\noindent The objective function $\Omega(\vec{\bfalpha})$ has a very particular structure that allows a rather elegant explicit formula \cite[\S II.B]{jovschnicPOF14} for the optimal $\vec{\bfalpha}$ via the normal equations in the unweighted case ($\WW=\Id_m$). Here,  we generalize the formula to the weighted case. Then, we discuss an additional structure. 

\begin{theorem}\label{TM:weighted_LS}
With the notation as above, the unique solution $\vec{\bfalpha}$ of the LS problem (\ref{eq:rec-error-min}) is 
	\begin{equation}\label{eq:alpha:normal:eq}
	\vec{\bfalpha} = [ (R^* R) \circ (\overline{\Vanderm_{\ell,m}\WW^2 \Vanderm_{\ell,m}^*}) ]^{-1} [ (\overline{\Vanderm_{\ell,m}\WW}\circ (R^* G\WW))\eb ] ,
	\end{equation}
	where $G=\begin{pmatrix} \g_1 & \ldots & \g_m\end{pmatrix}$, $\eb=\begin{pmatrix} 1 & \ldots & 1 \end{pmatrix}^T$. In terms of the original data,
	\begin{equation}\label{eq:alpha:normal:eq:orig-data}
	\vec{\bfalpha} = [ (Z_{\ell}^*Z_{\ell}) \circ (\overline{\Vanderm_{\ell,m}\WW^2 \Vanderm_{\ell,m}^*}) ]^{-1} [ (\overline{\Vanderm_{\ell,m}\WW}\circ (Z_{\ell}^* \X_m\WW))\eb ] .
	\end{equation}
\end{theorem}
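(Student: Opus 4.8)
The plan is to start from the dimension-reduced objective --- the last display before the theorem --- and recognize that its least squares coefficient matrix has Khatri--Rao structure, so that the normal equations can be evaluated in closed form. Writing $b=(\ww_1\g_1^T,\ldots,\ww_m\g_m^T)^T=\mathrm{vec}(G\WW)$ for the right-hand side and $M=(\WW\otimes R)\,S$ for the coefficient matrix, where $S$ is the tall matrix with the diagonal blocks $\Delta_{\Lambda_1},\ldots,\Delta_{\Lambda_m}$ stacked vertically, the problem is exactly $\|b-M\vec{\bfalpha}\|_2^2\to\min$ up to the additive constant $\sum_i\ww_i^2\|\h_i\|_2^2$. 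Once $M$ is shown to have full column rank, the unique minimizer is $\vec{\bfalpha}=(M^*M)^{-1}M^*b$, so the entire task reduces to computing $M^*M$ and $M^*b$ and matching them to the bracketed expressions in \eqref{eq:alpha:normal:eq}.

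First I would identify $S$ as a Khatri--Rao (column-wise Kronecker) product. The $j$-th column of $S$ is $(\lambda_j^0,\ldots,\lambda_j^{m-1})^T\otimes e_j$, where $e_j$ is the $j$-th column of $\Id_\ell$, so $S=\Vanderm_{\ell,m}^T\odot\Id_\ell$, with $\odot$ denoting the Khatri--Rao product. Applying the mixed-product rule $(A\otimes B)(C\odot D)=(AC)\odot(BD)$ with $A=\WW$, $B=R$, $C=\Vanderm_{\ell,m}^T$, $D=\Id_\ell$ then collapses the coefficient matrix to the compact form $M=(\WW\Vanderm_{\ell,m}^T)\odot R$.

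The crux is the Gram identity for Khatri--Rao products, $(A\odot B)^*(C\odot D)=(A^*C)\circ(B^*D)$, which follows entrywise from $\langle a\otimes b,\,c\otimes d\rangle=\langle a,c\rangle\langle b,d\rangle$. With $A=C=\WW\Vanderm_{\ell,m}^T$ and $B=D=R$ this gives $M^*M=\big((\WW\Vanderm_{\ell,m}^T)^*(\WW\Vanderm_{\ell,m}^T)\big)\circ(R^*R)=\overline{\Vanderm_{\ell,m}\WW^2\Vanderm_{\ell,m}^*}\circ(R^*R)$, using that $\WW$ is real diagonal so $(\WW\Vanderm_{\ell,m}^T)^*(\WW\Vanderm_{\ell,m}^T)=\overline{\Vanderm_{\ell,m}}\,\WW^2\Vanderm_{\ell,m}^T$. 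For the right-hand side I would use the column identity $(a\otimes b)^*\mathrm{vec}(X)=b^*X\overline{a}$, which yields $(M^*b)_j=R(:,j)^*\,G\,\WW^2\,\overline{\Vanderm_{\ell,m}(j,:)}^{\,T}=\sum_{i=1}^m\ww_i^2\,\overline{\lambda_j^{i-1}}\,R(:,j)^*\g_i$; reading this sum over $i$ as the Hadamard product of $\overline{\Vanderm_{\ell,m}\WW}$ with $R^*G\WW$ followed by right-multiplication by the all-ones vector $\eb$ reproduces exactly the bracketed vector in \eqref{eq:alpha:normal:eq}.

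Invertibility and uniqueness then come for free: $R^*R$ is positive definite since $R$ has full rank, and $\overline{\Vanderm_{\ell,m}\WW^2\Vanderm_{\ell,m}^*}$ is positive definite since $\Vanderm_{\ell,m}$ has full row rank (distinct $\lambda_j$ with $m\ge\ell$) and $\WW\succ 0$; by the Schur product theorem their Hadamard product $M^*M$ is positive definite, so $(M^*M)^{-1}M^*b$ is the unique minimizer. Finally, \eqref{eq:alpha:normal:eq:orig-data} follows immediately from $Z_\ell=Q\binom{R}{0}$, which gives $Z_\ell^*Z_\ell=R^*R$ and $Z_\ell^*\X_m=R^*\,Q(:,1:\ell)^*\X_m=R^*G$. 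The only genuinely delicate point is tracking the conjugates and transposes through the two Khatri--Rao identities so that the bars land on $\Vanderm_{\ell,m}$ and not on $R$; everything else is routine bookkeeping.
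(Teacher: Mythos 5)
Your proof is correct and follows essentially the same route as the paper: both write down the normal equations for the weighted, dimension-reduced problem and evaluate the Gram matrix $S_{\ww}^*S_{\ww}$ and the vector $S_{\ww}^*\vec{\g}_{\ww}$ in closed form as Hadamard products. The only cosmetic difference is that you package the computation via the Khatri--Rao identities $(\WW\otimes R)(\Vanderm_{\ell,m}^T\odot\Id_\ell)=(\WW\Vanderm_{\ell,m}^T)\odot R$ and $(A\odot B)^*(A\odot B)=(A^*A)\circ(B^*B)$ --- exactly the structure the paper itself records immediately afterwards in Proposition \ref{PROP:S=KR} --- whereas the paper's proof obtains the same Hadamard products by summing $\ww_k^2\,\Delta_{\Lambda_k}^*R^*R\,\Delta_{\Lambda_k}=\ww_k^2\,(R^*R)\circ(\overline{\Lambda_k}\Lambda_k^T)$ term by term.
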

\begin{proof}
Note that $\vec{\bfalpha}$ actually solves 
\begin{equation}\label{zd:eq:g-S*alpha}
\| (\WW\otimes \Id_{\ell}) \left[ \vec{\g} -
S \vec{\bfalpha} \right]\|_2\longrightarrow\min,\;\;\mbox{where}\;\;\vec{\g}= \begin{pmatrix} \g_1 \cr \vdots \cr \g_m\end{pmatrix},\;\; S = (\Id_m\otimes R) \begin{pmatrix} 
 \Delta_{\Lambda_1} \cr \vdots \cr  \Delta_{\Lambda_m}\end{pmatrix} \equiv 
\begin{pmatrix} 
R \Delta_{\Lambda_1} \cr \vdots \cr  R \Delta_{\Lambda_m}\end{pmatrix} .
\end{equation}
Hence, the optimal  $\vec{\bfalpha}$ is given by the Moore--Penrose generalized inverse,  $\vec{\bfalpha}=S_{\ww}^\dagger \vec{\g}_{\ww}$, where
$\vec{\g}_{\ww}= (\WW\otimes \Id_{\ell})\vec{\g}$, $S_{\ww}=(\WW\otimes \Id_{\ell})S$, $S_{\ww}^\dagger = (S_{\ww}^* S_{\ww})^{-1}S_{\ww}^*$, i.e.
\begin{equation}\label{eq:S-dagger}
\vec{\bfalpha}=S_{\ww}^\dagger\vec{\g}_{\ww} = (S_{\ww}^* S_{\ww})^{-1}S_{\ww}^* \vec{\g}_{\ww}= (\sum_{k=1}^m \ww_k^2 \Delta_{\Lambda_k}^* R^* R \Delta_{\Lambda_k})^{-1} \sum_{i=1}^m \ww_i \Delta_{\Lambda_i}^* (\ww_i R^{*}\g_i) .
\end{equation}
Further, using the Hadamard matrix product $\circ$ and the element-wise conjugation $\overline{\;\cdot\;}$, we can write  
\begin{eqnarray}\label{eq:S*S}
\sum_{k=1}^m \ww_k^2 \Delta_{\Lambda_k}^* R^* R \Delta_{\Lambda_k} &=& \sum_{k=1}^m \ww_k^2 (R^* R)\circ (\overline{\Lambda_k}\Lambda_k^T) = (R^* R) \circ \sum_{k=1}^m \ww_k^2
\overline{\Lambda_k}\Lambda_k^T \\ 
&=& (R^* R) \circ (\overline{\Vanderm}_{\ell,m}\WW^2 \Vanderm_{\ell,m}^T)
= (R^* R) \circ (\overline{\Vanderm_{\ell,m} \WW^2 \Vanderm_{\ell,m}^*}) ,
\end{eqnarray}
\begin{equation}\label{eq:G}
\sum_{i=1}^m \ww_i \Delta_{\Lambda_i}^* (\ww_i R^{*}\g_i) = (\overline{\Vanderm_{\ell,m}\WW}\circ (R^* G \WW))\eb .
\end{equation}
{Note that Theorem \ref{TM:weighted_LS} does not use the Vandermonde structure of $\Vanderm_{\ell,m}$; it only requires full row rank; similarly the triangular structure of $R$ is not important, and we only require $R$ to have full column rank. Hence, its result can be stated in a more general form.}
\end{proof}
The formula (\ref{eq:alpha:normal:eq}, \ref{eq:alpha:normal:eq:orig-data}), which appears to be new, contains the formula from \cite[\S II.B]{jovschnicPOF14} as a special unweighted case. 
Since the Hadamard product of two positive definite matrices remains positive definite, the solution of (\ref{eq:alpha:normal:eq}) is obtained using the Cholesky factorization of 
$(R^* R) \circ (\overline{\Vanderm_{\ell,m} \WW^2 \Vanderm_{\ell,m}^*})$ followed by forward and backward substitutions. The complexity of (\ref{eq:alpha:normal:eq}) in terms of arithmetic operations is dominated by $O(m\ell^2) + O(\ell^3)$. Typically, $m\gg \ell$.

In fact, the relations (\ref{zd:eq:g-S*alpha} -- \ref{eq:G}), which implied (\ref{eq:alpha:normal:eq}), hide an elegant structure that we discuss in \S \ref{SSS=Khatri-Rao-intro}.

\subsubsection{On Khatri--Rao structure of the snapshot reconstruction problem}\label{SSS=Khatri-Rao-intro}
Recall, for two column partitioned matrices $A=(a_1,a_2,\ldots, a_n)\in\C^{p\times n}$ and $B=(b_1,b_2,\ldots, b_n)\in\C^{q\times n}$, their Khatri--Rao product is defined as 
\begin{equation}
A \odot B = \begin{pmatrix} a_1\otimes b_1 & a_2\otimes b_2 & \ldots & a_n\otimes b_n \end{pmatrix} \in \C^{pq\times n} .
\end{equation}
The following proposition contains well known facts; it is included solely for the reader's convenience. For more detailed study of matrix products involved in this section we refer to \cite[Chapters 4 and 5]{hor-joh-91}. 
\begin{proposition}\label{PR:KR:properties}
	The Khatri--Rao product satisfies the following relations: 
	\begin{itemize}
		\item For any matrices $A$, $B$, $C$, $D$ of appropriate dimensions
		\begin{equation}
		(AB) \odot (CD) = (A\otimes C) (B\odot D) .
		\end{equation}
		\item For any three matrices $A$, $B$, $C$ of appropriate dimensions and with $B$ diagonal,
		\begin{equation}\label{eq:KR:ABC}
		\mathrm{vec}(ABC) = (C^T \odot A) \diag(B) .
		\end{equation}
		(Recall, if $B$ is not diagonal, $\mathrm{vec}(ABC)=(C^T\otimes A)\mathrm{vec}(B)$.)
		\item For any two matrices $A$ and $B$ with the same number of columns,
		\begin{eqnarray}
		(A\odot B)^T (A\odot B) &=& (B\odot A)^T (B\odot A) = (A^T A)  \circ (B^T B) ,\\
		(A\odot B)^* (A\odot B) &=& (B\odot A)^* (B\odot A) = (A^* A) \circ (B^* B).\;\;
		\end{eqnarray}
	\end{itemize}
\end{proposition}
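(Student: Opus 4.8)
The plan is to verify each of the three identities by reducing everything to the column structure of the Khatri--Rao product: by definition, the $j$-th column of $A\odot B$ is the Kronecker product of the $j$-th columns of $A$ and $B$. The only external tools I would invoke are the mixed-product property of the Kronecker product, $(A\otimes C)(u\otimes v)=(Au)\otimes(Cv)$, the scalar evaluation $(u\otimes v)^*(x\otimes y)=(u^*x)(v^*y)$, and the elementary vectorization identity $\mathrm{vec}(uv^T)=v\otimes u$. All three claims then follow by short column-by-column or entry-by-entry computations.

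For the first identity I would argue column by column. Writing $B=(b_1,\ldots,b_n)$ and $D=(d_1,\ldots,d_n)$, the $j$-th column of $(AB)\odot(CD)$ is $(Ab_j)\otimes(Cd_j)$. Applying the mixed-product property rewrites this as $(A\otimes C)(b_j\otimes d_j)$, and $b_j\otimes d_j$ is exactly the $j$-th column of $B\odot D$. Hence the two sides of $(AB)\odot(CD)=(A\otimes C)(B\odot D)$ agree column by column.

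For the second identity I would expand $ABC$ directly when $B=\diag(\beta_1,\ldots,\beta_n)$: then $ABC=\sum_k \beta_k\, a_k\, C_{k,:}$, where $a_k$ is the $k$-th column of $A$ and $C_{k,:}$ the $k$-th row of $C$. Each summand is a rank-one matrix, and vectorizing it via $\mathrm{vec}(uv^T)=v\otimes u$ turns $\mathrm{vec}(a_k C_{k,:})$ into $(\text{$k$-th column of }C^T)\otimes a_k$, which is precisely the $k$-th column of $C^T\odot A$. Summing these columns against the weights $\beta_k=\diag(B)$ yields $\mathrm{vec}(ABC)=(C^T\odot A)\diag(B)$; the general remark $\mathrm{vec}(ABC)=(C^T\otimes A)\mathrm{vec}(B)$ can be recorded as the non-diagonal counterpart.

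For the third identity I would compute the $(i,j)$ entry of $(A\odot B)^*(A\odot B)$ as $(a_i\otimes b_i)^*(a_j\otimes b_j)$ and use the scalar evaluation above to get $(a_i^*a_j)(b_i^*b_j)$. This is exactly $(A^*A)_{ij}(B^*B)_{ij}$, i.e. the $(i,j)$ entry of the Hadamard product $(A^*A)\circ(B^*B)$; the invariance under $A\leftrightarrow B$ is immediate from commutativity of scalar multiplication, and the transpose version follows verbatim on replacing $^*$ by $^T$. None of these steps poses a genuine obstacle, as the facts are standard; the only point requiring care is the index bookkeeping in the second identity, namely ensuring the $k$-th row of $C$ reappears as the $k$-th column of $C^T$ (hence of $C^T\odot A$) and confirming that the unconjugated variant $\mathrm{vec}(uv^T)=v\otimes u$ is the one in force.
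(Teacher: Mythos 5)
Your proof is correct. The paper itself does not prove this proposition --- it records these identities as well-known facts and refers the reader to Horn and Johnson (Chapters 4 and 5) --- so there is no argument to compare against; your column-by-column verification via the mixed-product property, the rank-one vectorization identity $\mathrm{vec}(uv^T)=v\otimes u$, and the entrywise evaluation $(u\otimes v)^*(x\otimes y)=(u^*x)(v^*y)$ is exactly the standard derivation those references give, and all three computations (including the index bookkeeping sending the $k$-th row of $C$ to the $k$-th column of $C^T\odot A$) check out.
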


Let $\Pi$ be permutation matrix whose columns are the columns of the $(m\ell\times m\ell)$ identity taken in the order of the following permutation $\varpi_{\ell,m}$:
\begin{equation}\label{eq:varpi-ell,m}
\varpi_{\ell,m}= \left( \begin{smallmatrix}
1 &    2   &   3     & \ldots &  m          & m+1 & m + 2  &  m + 3  & \ldots & 2m          & \ldots  & m(\ell-1)+1        & \ldots         &   m\ell  \cr
1 & \ell+1 & 2\ell+1 & \ldots & (m-1)\ell+1 &  2  & \ell+2 & 2\ell+2 & \ldots & (m-1)\ell+2 & \ldots  &   \ell  & \ldots   & \ell + (m-1)\ell
\end{smallmatrix}\right) .
\end{equation} 
(In Matlab, we can generate $\varpi_{\ell,m}$ as $\varpi_{\ell,m} = \texttt{reshape(reshape($1:\ell*m$,$\ell,m$)',$\ell*m$,1)}$, which intuitively describes $\varpi_{\ell,m}$ as transformation of indices between a column and a row major ordering of a two dimensional array.)
\begin{proposition}\label{PROP:S=KR} 
	The LS problem (\ref{zd:eq:g-S*alpha}) has the following Khatri--Rao structure:
	\begin{itemize}
\item[(i)] $ S = \Pi (R \odot \Vanderm_{\ell,m}^T)=\Vanderm_{\ell,m}^T\odot R$, or, equivalently, $ S(\varpi_{\ell,m},:) = R \odot \Vanderm_{\ell,m}^T$.
\item[(ii)] $(R^* R) \circ (\overline{\Vanderm_{\ell,m} \Vanderm_{\ell,m}^*}) = (R \odot \Vanderm_{\ell,m}^T)^* (R \odot \Vanderm_{\ell,m}^T)=(\Vanderm_{\ell,m}^T\odot R)^* (\Vanderm_{\ell,m}^T\odot R)=S^* S$.
\item[(iii)] For any $m\times m$ matrix $\WW$,  $(\WW \otimes \Id_m)S= (\WW \Vanderm_{\ell,m}^T)\odot R$.
	\end{itemize}
\end{proposition}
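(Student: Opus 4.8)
The plan is to reduce all three parts to the identities collected in Proposition \ref{PR:KR:properties}, once the matrix $S$ has been recognized as a Khatri--Rao product. The crucial preliminary observation is that the stacked block of diagonal powers is itself a Khatri--Rao product,
\[
\begin{pmatrix} \Delta_{\Lambda_1} \cr \vdots \cr \Delta_{\Lambda_m}\end{pmatrix}
= \Vanderm_{\ell,m}^T \odot \Id_{\ell},
\]
which I would verify column by column: the $q$-th column of $\Delta_{\Lambda_i}=\bfLambda^{i-1}$ is $\lambda_q^{i-1}\ee_q$, so stacking over $i=1,\ldots,m$ produces $\Vanderm_{\ell,m}^T(:,q)\otimes\ee_q$, which is exactly the $q$-th column of $\Vanderm_{\ell,m}^T\odot\Id_{\ell}$.

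For part (i) I would substitute this identity into $S=(\Id_m\otimes R)\left(\begin{smallmatrix}\Delta_{\Lambda_1}\cr\vdots\cr\Delta_{\Lambda_m}\end{smallmatrix}\right)$ and apply the mixed-product rule $(AB)\odot(CD)=(A\otimes C)(B\odot D)$ with $A=\Id_m$, $B=\Vanderm_{\ell,m}^T$, $C=R$, $D=\Id_{\ell}$; this yields $S=(\Id_m\Vanderm_{\ell,m}^T)\odot(R\,\Id_{\ell})=\Vanderm_{\ell,m}^T\odot R$ at once. The remaining identity $S=\Pi(R\odot\Vanderm_{\ell,m}^T)$ is the assertion that the two Khatri--Rao orderings differ only by a row permutation: comparing the $((i-1)\ell+p)$-th entry $\lambda_q^{i-1}R_{pq}$ of column $q$ of $\Vanderm_{\ell,m}^T\odot R$ with the $((p-1)m+i)$-th entry $R_{pq}\lambda_q^{i-1}$ of column $q$ of $R\odot\Vanderm_{\ell,m}^T$ shows that the row-index map $(p-1)m+i\mapsto(i-1)\ell+p$ is precisely $\varpi_{\ell,m}$. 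Hence $S(\varpi_{\ell,m},:)=R\odot\Vanderm_{\ell,m}^T$, and equivalently $S=\Pi(R\odot\Vanderm_{\ell,m}^T)$ since $\Pi$ is the permutation matrix associated with $\varpi_{\ell,m}$.

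Parts (ii) and (iii) are then short consequences. For (ii) I would invoke the Gram identity $(A\odot B)^*(A\odot B)=(B\odot A)^*(B\odot A)=(A^*A)\circ(B^*B)$ from Proposition \ref{PR:KR:properties} with $A=R$, $B=\Vanderm_{\ell,m}^T$, together with $(\Vanderm_{\ell,m}^T)^*\Vanderm_{\ell,m}^T=\overline{\Vanderm_{\ell,m}\Vanderm_{\ell,m}^*}$; the order-independence of this Gram matrix, combined with part (i) and the unitarity of $\Pi$, identifies all four expressions, the last of which is $S^*S$. For (iii) I would apply the mixed-product rule once more, now with $A=\WW$, $C=\Id_{\ell}$, $B=\Vanderm_{\ell,m}^T$, $D=R$, reading the identity block conformably as $\Id_{\ell}$ so the product matches the $m\ell$ rows of $S$, to obtain $(\WW\otimes\Id_{\ell})S=(\WW\otimes\Id_{\ell})(\Vanderm_{\ell,m}^T\odot R)=(\WW\Vanderm_{\ell,m}^T)\odot R$.

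The only genuinely delicate step is the index bookkeeping that certifies $\varpi_{\ell,m}$ as the perfect-shuffle permutation exchanging the two tensor factors; everything else is a mechanical application of Proposition \ref{PR:KR:properties}. I therefore expect the main care to go into matching the column-major and row-major orderings cleanly and into fixing the conformable size of the identity block in (iii).
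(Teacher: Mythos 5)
Your proof is correct. The paper itself offers no argument beyond the single word ``Brute--force,'' so there is nothing substantive to compare against; what you have written is a clean and fully valid way to carry out that verification. The one genuinely structural idea you add --- recognizing the stacked block of diagonal powers as $\Vanderm_{\ell,m}^T\odot\Id_{\ell}$ and then invoking the mixed-product rule $(AB)\odot(CD)=(A\otimes C)(B\odot D)$ from Proposition \ref{PR:KR:properties} --- replaces most of the entrywise checking with two lines of algebra, leaving only the index bookkeeping for $\varpi_{\ell,m}$ (which you do correctly: the row $(p-1)m+i$ of $R\odot\Vanderm_{\ell,m}^T$ matches row $(i-1)\ell+p$ of $\Vanderm_{\ell,m}^T\odot R$, which is exactly the perfect shuffle \eqref{eq:varpi-ell,m}). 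You also rightly read the $\Id_m$ in part (iii) as $\Id_{\ell}$; as stated, $(\WW\otimes\Id_m)S$ is not conformable unless $\ell=m$, and the weighting in \eqref{zd:eq:g-S*alpha} confirms $\Id_{\ell}$ is intended. No gaps.
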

\begin{proof}
	Brute--force.
\end{proof}
Since $\Pi$ is orthogonal, the problem reduces to computing the QR factorization of $R \odot \Vanderm_{\ell,m}^T$, and the objective (\ref{zd:eq:g-S*alpha}) can be written as 
\begin{equation}
\| \vec{\g} -
S \vec{\bfalpha}\|_2 \equiv \| \vec{\g} -
\Pi (R \odot \Vanderm_{\ell,m}^T ) \vec{\bfalpha}\|_2 \equiv  \| \Pi^T \vec{\g} -
(R \odot \Vanderm_{\ell,m}^T) \vec{\bfalpha}\|_2\longrightarrow\min .
\end{equation}
Note that $\vec{\g}=\mathrm{vec}(G)$ and that $\Pi^T\vec{\g}=\mathrm{vec}(G^T)$, where $G$ is from (\ref{eq:G}), and $\mathrm{vec}(\cdot)$ denotes the operator that reshapes a matrix by stacking its columns in a tall vector.
Normal equations are attractive in this setting because of avoiding the row dimension $m\ell$ of the Khatri--Rao product.

\subsubsection{Application in antenna array processing}\label{SSS=Antena-Array}
The matrix least squares approximation with the Khatri-Rao structure also appears in other applications. An example is multistatic antenna array processing, where one determines the scattering coefficients $\alpha_i$ by solving 
$$
\| H - G_{rec} \mathrm{diag}(\alpha_i)_{i=1}^{\ell} G_{tr}^T \|_F \longrightarrow \min_{\alpha_i}.
$$
Here $H$ stands for the multistatic data matrix, and the columns  $G_{rec}(:,i)$ and $G_{tr}(:,i)$ are the steering vectors associated with wave propagation between the receiving and transmitting array, respectively, and the $i$th scatterer. We refer the reader to 
 \cite{lev-ari2005}, where the unweighted (i.e. $\WW=\Id$) version of  (\ref{eq:alpha:normal:eq}) is derived as normal equations solution based on the properties of the Khatri--Rao product. In fact, Theorem \ref{TM:weighted_LS} provides a generalization of \cite{lev-ari2005} to weighted least squares, and the material of this section supplies the supporting numerical analysis. 
(The formulas in Theorem \ref{TM:weighted_LS} do not use the fact that $\Vanderm_{\ell,m}$ is Vandermonde matrix.)

\section{An improvement of the sparsity promoting DMDSP}\label{SS=DMDSP-improved}
In the notation of \cite{jovschnicPOF14}, with $\ell=k$ (i.e. using $k$ modes, where $k$ is the dimension of the POD basis), we can write $\Omega^2(\vec{\bfalpha})$ in the unweighted case ($\WW=\Id_m$) as
\begin{equation}
\Omega^2(\vec{\bfalpha}) = \vec{\bfalpha}^* \mathbb{P}\vec{\bfalpha} - \mathbf{q}^*\vec{\bfalpha} - \vec{\bfalpha}^* \mathbf{q} + \|\Sigma_k\|_F^2,\;\;\mbox{where}\;\; \mathbb{P}= (Z_{k}^*Z_{k}) \circ (\overline{\Vanderm_{k,m} \Vanderm_{k,m}^*}),
\end{equation}
and
\begin{equation}
\mathbf{q} = \mathrm{diag}(\overline{\Vanderm_{k,m}V_k\Sigma_k B_k}),
\end{equation}
where $Z_k=U_k B_k$ and $B_k$ is the matrix of the Ritz vectors computed in Line 5 of Algorithm \ref{zd:ALG:DMD}. (If the refined Ritz vectors are used, then $B_k$ is replaced with $W_k$ that is computed in Line 12 of Algorithm \ref{zd:ALG:DMD:RRR}.)
\begin{remark}
{\em
Let us compare this $\mathbf{q}$ and the corresponding 
$\mathbf{q_0} =	[ (\overline{\Vanderm_{k,m}}\circ (Z_{k}^* \X_m))\eb ]$ in relation (\ref{eq:alpha:normal:eq:orig-data}) (with $\ell=k$ and $W=\Id_m$). 
Since $Z_{k}^* \X_m=B_k^*\Sigma_k V_k^*$,  by \cite[Lemma 5.1.3]{hor-joh-91}, we have
$$
\mathbf{q_0}=[ (\overline{\Vanderm_{k,m}}\circ (B_{k}^* \Sigma_k V_k^*))\eb ] = \mathrm{diag}(\overline{\Vanderm_{k,m}} (B_{k}^* \Sigma_k V_k^*)^T = 
\mathrm{diag}(\overline{\Vanderm_{k,m}} \overline{V_k}\Sigma_k \overline{B_{k}}) = \mathbf{q}.
$$
}	
\end{remark}
The constrained problem (\ref{eq:DMDSP:problem}) ($\Omega^2(\vec{\bfalpha})\longrightarrow\min$, $E^T\vec{\bfalpha}=\0$, where the $k\times s$ matrix $E$ is described in \S \ref{SS=DMDSP}) is in the polishing phase of  DMDSP solved via the variation of the Lagrangian (see \cite[Appendix C]{jovschnicPOF14}), which yields the augmented $(k+s)\times(k+s)$ system of equations
\begin{equation}\label{eq:saddle}
\begin{pmatrix}
\mathbb{P} & E \cr E^T & \0
\end{pmatrix} \begin{pmatrix} \vec{\bfalpha} \cr \bfnu \end{pmatrix} = \begin{pmatrix} \mathbf{q} \cr \0\end{pmatrix}
\end{equation}
for the amplitudes $\vec{\bfalpha}$ and the vector of Lagrange multipliers $\bfnu$.
The amplitudes are then explicitly expressed as (see \cite[Appendix C]{jovschnicPOF14} and \cite{dmdsp-matlab})
\begin{equation}\label{eq:DMDSP:alpha}
\vec{\bfalpha} = \begin{pmatrix} \Id_k & \0 \end{pmatrix} \left[\begin{pmatrix}
\mathbb{P} & E \cr E^T & \0 \end{pmatrix}^{-1} \begin{pmatrix} \mathbf{q} \cr \0\end{pmatrix}\right] .
\end{equation}
More efficient procedure is to write $\vec{\bfalpha}$ as $\vec{\bfalpha}=E_{\perp}\vec{\bfbeta}$, where $E_{\perp}\in\C^{k\times (k-s)}$ contains the remaining columns of the identity (complementary to the columns of $E$), and $\vec{\bfbeta}\in\C^{k-s}$ is a new (unconstrained) variable.
The objective function now reads
\begin{equation}
\Omega^2(\vec{\bfalpha})_{E^T\vec{\bfalpha}=\0} \equiv \Omega^2(\vec{\bfbeta}) =
\vec{\bfbeta}^* (E_{\perp}^*\mathbb{P}E_{\perp}) \vec{\bfbeta} - (E_{\perp}^* \mathbf{q})^*\vec{\bfbeta} - \vec{\bfbeta}^* (E_{\perp}^*\mathbf{q}) + \|\Sigma_k\|_F^2 ,
\end{equation}
where $E_{\perp}^*\mathbb{P}E_{\perp}$ is a $(k-s)\times (k-s)$ main submatrix of $\mathbb{P}$, and the explicit solution is
\begin{equation}\label{eq:DMDSP:alpha:new}
\vec{\bfalpha} = E_{\perp} \left[ (E_{\perp}^*\mathbb{P}E_{\perp})^{-1} (E_{\perp}^*\mathbf{q}) \right].
\end{equation}
Schematically, the coefficient matrices of the linear systems in (\ref{eq:DMDSP:alpha}) and (\ref{eq:DMDSP:alpha:new}) can be illustrated as follows:
\begin{equation}
\begin{pmatrix}
\mathbb{P} & E \cr E^T & \0
\end{pmatrix} = \left(\begin{smallmatrix}
* & * & * & * & * & * & * & * & \mathbf{1} & 0 & 0 & 0 & 0 & 0 \cr 
* & \boldsymbol{\circledast} & * & * & \boldsymbol{\circledast} & * & * & * & 0 & 0 & 0 & 0 & 0 & 0 \cr
* & * & * & * & * & * & * & * & 0 & \mathbf{1} & 0 & 0 & 0 & 0 \cr
* & * & * & * & * & * & * & * & 0 & 0 & \mathbf{1} & 0 & 0 & 0 \cr
* & \boldsymbol{\circledast} & * & * & \boldsymbol{\circledast} & * & * & * & 0 & 0 & 0 & 0 & 0 & 0 \cr
* & * & * & * & * & * & * & * & 0 & 0 & 0 & \mathbf{1} & 0 & 0 \cr
* & * & * & * & * & * & * & * & 0 & 0 & 0 & 0 & \mathbf{1} & 0 \cr
* & * & * & * & * & * & * & * & 0 & 0 & 0 & 0 & 0 & \mathbf{1} \cr
\mathbf{1} & 0 & 0 & 0 & 0 & 0 & 0 & 0 & \mathbf{0} & \mathbf{0} & \mathbf{0} & \mathbf{0} & \mathbf{0} & \mathbf{0} \cr
0 & 0 & \mathbf{1} & 0 & 0 & 0 & 0 & 0 & \mathbf{0} & \mathbf{0} & \mathbf{0} & \mathbf{0} & \mathbf{0} & \mathbf{0} \cr
0 & 0 & 0 & \mathbf{1} & 0 & 0 & 0 & 0 & \mathbf{0} & \mathbf{0} & \mathbf{0} & \mathbf{0} & \mathbf{0} & \mathbf{0} \cr
0 & 0 & 0 & 0 & 0 & \mathbf{1} & 0 & 0 & \mathbf{0} & \mathbf{0} & \mathbf{0} & \mathbf{0} & \mathbf{0} & \mathbf{0} \cr
0 & 0 & 0 & 0 & 0 & 0 & \mathbf{1} & 0 & \mathbf{0} & \mathbf{0} & \mathbf{0} & \mathbf{0} & \mathbf{0} & \mathbf{0} \cr
0 & 0 & 0 & 0 & 0 & 0 & 0 & \mathbf{1} & \mathbf{0} & \mathbf{0} & \mathbf{0} & \mathbf{0} & \mathbf{0} & \mathbf{0} \cr
\end{smallmatrix}\right),\;\;
E_{\perp}^*\mathbb{P}E_{\perp} = \left( \begin{smallmatrix} \boldsymbol{\circledast} &  \boldsymbol{\circledast} \cr \boldsymbol{\circledast} & \boldsymbol{\circledast} \end{smallmatrix}\right),\;\;
E_{\perp} = \left( \begin{smallmatrix} 
0 & 0 \cr
\mathbf{1} & 0 \cr
0 & 0 \cr
0 & 0 \cr
0 & \mathbf{1} \cr
0 & 0 \cr
0 & 0 \cr
0 & 0
\end{smallmatrix}\right).
\end{equation}
\begin{remark}
	{\em 
		To appreciate the difference between (\ref{eq:DMDSP:alpha}) and (\ref{eq:DMDSP:alpha:new}), consider for example  $k=m=1200$, and  only $\ell=30$ modes. The augmented system in (\ref{eq:DMDSP:alpha})  is of dimension $1200+1170=2370$, while the same result is obtained from the system (\ref{eq:DMDSP:alpha:new}) of dimension $30$. Given the cubic complexity of the solution of linear systems, the speedup of (\ref{eq:DMDSP:alpha:new}) over (\ref{eq:DMDSP:alpha}) is considerable. 
		It should be noted that (\ref{eq:saddle}) is a particular case of the saddle point problem that is solved by specially tailored methods; for an overview we refer to \cite{benzi_golub_liesen_2005}; see also e.g. \cite{Gould-Proj_Krylov-Saddle-simax-2014}. However, our proposed alternative (\ref{eq:DMDSP:alpha:new}) is simpler and much more efficient. 
	}
\end{remark}

\begin{remark}
	{\em
		The same scheme applies in the more general case with arbitrary full column rank matrix $E$. The only technical difference is in constructing the complement $E_{\perp}$. Let 
		$
		E = Q\left(\begin{smallmatrix} R \cr \0\end{smallmatrix}\right) \equiv \begin{pmatrix} Q_1 & Q_2 \end{pmatrix} \left(\begin{smallmatrix} R \cr \0\end{smallmatrix}\right)
		$	
		be the full QR factorization. It is then clear that we can take $E_{\perp}=Q_2$, and the above procedure applies verbatim.
	}
\end{remark}

\section{Sensitivity analysis of normal equations method}\label{S=Normal-Sensitivity}
Although elegant and efficient, the formula (\ref{eq:alpha:normal:eq}) has a drawback, typical for normal equation based LS solution -- it squares the condition number ($\kappa_2(R^* R)=\kappa_2(R)^2$ and $\kappa_2(\overline{\Vanderm_{\ell,m}\WW^2 \Vanderm_{\ell,m}^*})=\kappa_2(\Vanderm_{\ell,m}\WW)^2$) and uses a Cholesky factorization based solver with the coefficient matrix $C\equiv (R^* R) \circ (\overline{\Vanderm_{\ell,m}\WW^2 \Vanderm_{\ell,m}^*})$. 
In theory, by a Schur theorem \cite[Theorem 5.2.1]{hor-joh-91}, the Hadamard product of a positive definite matrix and a positive semidefinite matrix with positive diagonal is positive definite and thus possesses the Cholesky factor.

In extremely ill-conditioned cases, however, it can happen that both computed and stored matrices $R^*R$ and $\Vanderm_{\ell,m} \WW^2 \Vanderm_{\ell,m}^*$ are exactly singular (or even indefinite) so that the Cholesky factorization of $C$ might fail. 
Recall that, in finite precision computation, the Cholesky factorization may fail even if the matrix stored in the machine memory is exactly positive definite. Moreover, the factorization may even succeed with an indefinite matrix on input. We refer to \cite{demmel-89-Cholesky} for details on computing the Cholesky factorization in floating point arithmetic. 


Hence, the formula (\ref{eq:alpha:normal:eq}) should be deployed with great care, and its sensitivity must be well understood as it is used in a variety of applications. 

\noindent The following example is contrived, to illustrate the above discussion. 
\begin{example}\label{EX:bad-normal-1}
	{\em 
	(All computations done in Matlab R2015.a, with the double precision roundoff unit $\roff=\texttt{eps=2.220446049250313e-16}$.)
Let $\ell=3$, $m=4$, $\xi=\sqrt{\roff}$, $\lambda_1 = \xi$, $\lambda_2=2\xi$, $\lambda_3=0.2$, so that the Vandermonde section $\Vanderm_{\ell,m}$ equals
{
$$
\Vanderm_{\ell,m} = \left(\begin{smallmatrix}
1.000000000000000e+00  &   1.490116119384766e-08   &  2.220446049250313e-16  &   3.308722450212111e-24 \cr 
1.000000000000000e+00  &   2.980232238769531e-08   &  8.881784197001252e-16   &  2.646977960169689e-23 \cr 
1.000000000000000e+00  &   2.000000000000000e-01   &  4.000000000000001e-02   &  8.000000000000002e-03
 \end{smallmatrix}\right) ,
$$  
}
and let the triangular factor $R$ be
$$
R = \begin{pmatrix} 1 & 1 & 1 \cr 0 & \xi/2 & \xi \cr 0 & 0 & \xi	
	\end{pmatrix} = \left( \begin{smallmatrix}
	1.000000000000000e+00  &   1.000000000000000e+00  &   1.000000000000000e+00\cr
	0   &  7.450580596923828e-09  &   1.490116119384766e-08 \cr
	0                &         0   &  1.490116119384766e-08
	\end{smallmatrix}\right) .
	$$
Set $\WW=\Id_m$. From the singular values of $\Vanderm_{\ell,m}$ and $R$, computed as
{\small 
$$ 
\begin{array}{c|c|c}
  &   \sigma_i(\Vanderm_{\ell,m}) & \sigma_i(R) \cr\hline
 1&    1.736092504099537e+00 &    1.732050807568878e+00 \cr 
 2&    1.662733207986230e-01  &   1.555891180151553e-08 \cr
 3&    2.105723035894610e-09   &  4.119745457168918e-09 \cr
     \end{array},
$$
we see that their condition numbers are of the order of  $1/\sqrt{\roff}$, which leaves the possibility of computation (involving $R$ and $\Vanderm_{\ell,m}$) with an $O(\sqrt{\roff})$ accuracy -- in the IEEE 64 bit arithmetic this means seven to eight decimal places. Moreover, both $\Vanderm_{\ell,m}$ and $R$ are of full rank and the closest rank deficient matrices are at distances $\sigma_3(\Vanderm_{\ell,m}) > 10^{-9}$ and $\sigma_3(R) >10^{-9}$, respectively.
}
Further, since both matrices are entry-wise nonnegative, both $\Vanderm_{\ell,m}\Vanderm_{\ell,m}^*$ and $R^* R$ are computed to nearly full machine precision\footnote{In this example, there are no proper subtractions and each entry in both matrices is approximated with the corresponding floating point number up to an $O(\roff)$ relative error -- practically the best one can hope for.}; the same holds for $C= (R^* R) \circ (\overline{\Vanderm_{\ell,m} \Vanderm_{\ell,m}^*})$. Although all these three matrices  are by design positive definite and computed in each entry to full machine precision, none of them is numerically positive definite -- the Cholesky factorization fails on each of them: \\

\begin{minipage}[t]{0.5\textwidth}
\begin{verbatim}
>> chol(Vlm*Vlm')
Error using chol
Matrix must be positive definite.

>> chol(R'*R)
Error using chol
Matrix must be positive definite.
\end{verbatim}
\end{minipage}
\begin{minipage}[t]{0.5\textwidth}
\begin{verbatim}
>> chol((R'*R).*(Vlm*Vlm'))
Error using chol
Matrix must be positive definite.
\end{verbatim}
\end{minipage}
\ \\
Hence, the normal equation solver (which assumes positive definite linear system) might fail; and even if it succeeded the result might be unacceptably  inaccurate.
}
\end{example}

On the other hand, the unweighted form ($\WW=\Id$) of the formula (\ref{eq:alpha:normal:eq}) has been successfully used in the computational DMD framework, despite the fact that it is based on normal equation (an ill-advised approach) that involves potentially ill-conditioned matrices. We offer an explanation and provide a way to estimate a priori whether this method can produce sufficiently accurate result. 

\subsection{Condition number estimates}\label{SSS=Cond-est}
Based on \cite{demmel-89-Cholesky}, we know that floating point Cholesky factorization $C=LL^*$ ($L$ lower triangular with positive diagonal) of $C$ is feasible if\footnote{Actually, if no additional structure is assumed, here we have an "if and only if". This means that, in absence of additional properties such as sparsity or sign distribution of matrix entries, the failure of the Cholesky decomposition means that the matrix cannot be considered numerically positive definite.} the matrix $C_s= (c_{ij}/\sqrt{c_{ii}c_{jj}})_{i,j=1}^{\ell}$ is well conditioned. 
Further, if $\widetilde{L}$ is the computed Cholesky factor, then $\widetilde{L}\widetilde{L}^* = C + \delta C$, where the backward error $\delta C = (\delta c_{ij})_{i,j=1}^\ell$ of the computed factorization is such that 
\begin{equation}\label{eq:dC/D}
\max_{i,j} \frac{|\delta c_{ij}|}{\sqrt{c_{ii}c_{jj}}} \leq O(\ell) \roff .
\end{equation}
If we set $D_C=\mathrm{diag}(\sqrt{c_{ii}})_{i=1}^\ell$, then the relation above can be written as 
\begin{equation}\label{eq:Chol-backward}
\widetilde{L}\widetilde{L}^* = C + \delta C = D_C ( C_s + D_C^{-1} \delta C D_C^{-1} ) D_C, \;\;
C_s = D_C^{-1} C D_C^{-1},
\end{equation}
where the entries of $D_C^{-1} \delta C D_C^{-1}$ are estimated by (\ref{eq:dC/D}). Note that $C_s$ has unit diagonal and that all its off-diagonal entries are in absolute value below one. 

By \cite[Theorem 3.1]{drm-oml-ves-94}, we can write $\widetilde{L} = L (\Id+\Gamma)$, where $\Id+\Gamma$ is lower triangular with positive diagonal and the size of the multiplicative error (note: $\delta L \equiv \widetilde{L}-L = L \Gamma$) can be bounded by 
\begin{equation}\label{eq:Cholesky:Gamma}
\|\Gamma\|_2 \leq O(\ell\log_2 \ell)\roff \| C_s^{-1}\|_2 \leq O(\ell\log_2 \ell) \roff \kappa_2(C_s).
\end{equation}
Further, if we solve the linear system $ C x = b\neq \0$ using the Cholesky factor in the forward and backward substitutions, then the computed solution $\widetilde{x}$ satisfies (see \cite[Theorem 2.1]{demmel-89-Cholesky})
\begin{equation}
\frac{\| D_C ( \widetilde{x} - C^{-1}b ) \|_2}{\| D_C \widetilde{x}\|_2} \leq g(\ell) \roff \kappa_2(C_s),
\end{equation}
where $g(\ell)$ is modest function of the dimension. Note that this implies component-wise error bound for each $\widetilde{x}_i \neq 0$:
\begin{equation}
\frac{|\widetilde{x}_i - (C^{-1}b)_i|}{|\widetilde{x}_i|} \leq 
\underbrace{\left[ \frac{\| D_C \widetilde{x}\|_2}{\sqrt{c_{ii}}|\widetilde{x}_i|} \right]}_{\geq 1} g(\ell) \roff \kappa_2(C_s).
\end{equation}
For further discussion we refer to \cite{demmel-89-Cholesky}.

Hence, it is the scaled condition number $\kappa_2(C_s)$ that determines the sensitivity to perturbations and the accuracy of the computed amplitudes, and not $\kappa_2(C)$. 
This is important because of the following theorem by Van der Sluis \cite{slu-69}:
\begin{theorem}\label{TM:VanDerSluis}
	Let $H\in\C^{n\times n}$ be positive definite Hermitian matrix, $D_H=\mathrm{diag}(\sqrt{H_{ii}})$ and $H_s = D_H^{-1} H D_H^{-1}$.
	Then
	$
	\kappa_2(H_s)\leq n \min_{D=diag}\kappa_2(D H D).
	$
\end{theorem}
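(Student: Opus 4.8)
The plan is to reduce the statement to the unit-diagonal case and then combine a trace bound with a single Rayleigh-quotient comparison. First I would note that equilibration does not alter the family of diagonal scalings. Since $H = D_H H_s D_H$, for any positive diagonal $D$ the commutativity of diagonal matrices gives $DHD = (DD_H)H_s(DD_H)$; setting $E = DD_H$, the correspondence $D\mapsto E$ is a bijection of the positive diagonal matrices, so $\min_{D}\kappa_2(DHD) = \min_{E}\kappa_2(EH_sE)$. Hence it suffices to prove $\kappa_2(H_s)\le n\,\kappa_2(EH_sE)$ for every positive diagonal $E$, where $H_s$ is Hermitian positive definite with $(H_s)_{ii}=1$.

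The heart of the argument is the single bound $\kappa_2(EH_sE)\ge 1/\lambda_{\min}(H_s)$. For the largest eigenvalue I would test the Rayleigh quotient at a standard basis vector $e_j$ whose scaling is maximal, $e_j=\max_i e_i=:e_{\max}$, which gives $\lambda_{\max}(EH_sE)\ge (EH_sE)_{jj}=e_{\max}^2$. For the smallest eigenvalue I would use that $EH_sE$ shares its spectrum with $H_s^{1/2}E^2H_s^{1/2}$ together with the Loewner inequality $E^2\preceq e_{\max}^2\Id$, which yields $H_s^{1/2}E^2H_s^{1/2}\preceq e_{\max}^2 H_s$ and, by monotonicity of $\lambda_{\min}$, $\lambda_{\min}(EH_sE)\le e_{\max}^2\lambda_{\min}(H_s)$. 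Dividing, the factor $e_{\max}^2$ cancels and leaves $\kappa_2(EH_sE)\ge 1/\lambda_{\min}(H_s)$.

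Finally I would bring in the trace bound. Because $H_s$ has unit diagonal, $\sum_i\lambda_i(H_s)=\operatorname{tr}(H_s)=n$ with all eigenvalues positive, so $\lambda_{\max}(H_s)\le n$. Rewriting $1/\lambda_{\min}(H_s)=\kappa_2(H_s)/\lambda_{\max}(H_s)\ge \kappa_2(H_s)/n$ and chaining with the previous step gives $\kappa_2(EH_sE)\ge \kappa_2(H_s)/n$ for every $E$, that is $\kappa_2(H_s)\le n\min_E\kappa_2(EH_sE)=n\min_D\kappa_2(DHD)$, as claimed.

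The step I expect to require the most care is the lower bound on $\lambda_{\min}(EH_sE)$ across the non-commuting product. The clean route is the similarity to $H_s^{1/2}E^2H_s^{1/2}$ together with Loewner monotonicity of $\lambda_{\min}$; the tempting alternative of bounding $\lambda_{\min}(EH_sE)\le\min_i e_i^2$ directly from the diagonal is also correct but pairs the wrong scaling factor with the numerator and only delivers the far weaker quadratic estimate $\kappa_2(H_s)\le \kappa_2(EH_sE)^2$. Securing the linear factor $n$ hinges precisely on matching $e_{\max}^2$ in the numerator against $e_{\max}^2\lambda_{\min}(H_s)$ in the denominator, after which the trace bound $\lambda_{\max}(H_s)\le n$ supplies the constant.
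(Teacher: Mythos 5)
The paper states this result as a citation to Van der Sluis \cite{slu-69} and gives no proof of its own, so there is nothing internal to compare against. Your argument is correct and complete: the reduction $DHD=(DD_H)H_s(DD_H)$ is a genuine bijection of positive diagonal scalings, the bound $\lambda_{\max}(EH_sE)\geq e_{\max}^2$ from the diagonal entry, the bound $\lambda_{\min}(EH_sE)\leq e_{\max}^2\lambda_{\min}(H_s)$ via the spectrum of $H_s^{1/2}E^2H_s^{1/2}$ and Loewner monotonicity, and the trace bound $\lambda_{\max}(H_s)\leq\operatorname{tr}(H_s)=n$ fit together exactly as you describe, and this is essentially the classical proof (the same $\lambda_{\min}$ bound is often obtained by testing the Rayleigh quotient of $EH_sE$ at $E^{-1}v$ with $v$ the minimal eigenvector of $H_s$, which is an equivalent route). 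Your closing remark is also on point: pairing $e_{\max}^2$ in both the numerator and denominator is precisely what secures the linear factor $n$ rather than a squared condition number.
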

\begin{remark}\label{REM:Scond}
{\em 
Note that $\kappa_2(C_s)$ can be at most $\ell$ times larger, and that it can be much smaller than $\kappa_2(C)$. 
}
\end{remark}
In our case, $C = A\circ B$, with $A=R^* R$, $B=\overline{\Vanderm_{\ell,m}\WW^2 \Vanderm_{\ell,m}^*}$, and it is important to understand how $\kappa_2(C_s)$ depends on $A$ and $B$.

\begin{theorem}\label{TM:Cond:A*B}
Let $A$ ad $B$ be Hermitian positive semidefinite matrices with positive diagonal entries, and let $C=A\circ B$. If $A_s=(a_{ij}/\sqrt{a_{ii}a_{jj}})$, 	$B_s=(b_{ij}/\sqrt{b_{ii}b_{jj}})$, $C_s= (c_{ij}/\sqrt{c_{ii}c_{jj}})$, then
\begin{equation}
\max( \lambda_{\min}(A_s), \lambda_{\min}(B_s)) \leq \lambda_i(C_s) \leq
\min( \lambda_{\max}(A_s), \lambda_{\max}(B_s)).
\end{equation}
In particular, $\|C_s^{-1}\|_2 \leq \min( \|A_s^{-1}\|_2, \|B_s^{-1}\|_2)$ and  $\kappa_2(C_s)\leq \min( \kappa_2(A_s), \kappa_2(B_s) )$. If $A$ or $B$ is diagonal, all inequalities in this theorem become equalities.
\end{theorem}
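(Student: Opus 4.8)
The plan is to reduce the entire statement to one structural identity and then lean on the Schur product theorem. First I would observe that the entrywise normalization distributes over the Hadamard product: since $c_{ij}=a_{ij}b_{ij}$, and in particular $c_{ii}=a_{ii}b_{ii}$, we have
\[
\frac{c_{ij}}{\sqrt{c_{ii}c_{jj}}} = \frac{a_{ij}}{\sqrt{a_{ii}a_{jj}}}\cdot\frac{b_{ij}}{\sqrt{b_{ii}b_{jj}}},
\]
so that $C_s=A_s\circ B_s$. Moreover $A_s = D_A^{-1} A D_A^{-1}$ with $D_A=\diag(\sqrt{a_{ii}})$ is a congruence transform of $A$, hence Hermitian positive semidefinite, and by construction it has unit diagonal; the same holds for $B_s$. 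Thus the theorem is really a statement about the spectrum of a Hadamard product of two positive semidefinite matrices with unit diagonal, and it is this unit-diagonal normalization that will drive the argument.

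For the eigenvalue bounds I would show $\lambda_i(C_s)\le\lambda_{\max}(A_s)$ for every $i$ as follows. The matrix $\lambda_{\max}(A_s)\Id-A_s$ is positive semidefinite, and $B_s$ is positive semidefinite, so by the Schur product theorem \cite[Theorem 5.2.1]{hor-joh-91} their Hadamard product is positive semidefinite:
\[
\bigl(\lambda_{\max}(A_s)\Id-A_s\bigr)\circ B_s \succeq 0.
\]
Expanding by bilinearity and using that $\Id\circ B_s=\diag((B_s)_{ii})=\Id$, because $B_s$ has unit diagonal, this reads $\lambda_{\max}(A_s)\Id - C_s\succeq 0$, i.e.\ $C_s\preceq\lambda_{\max}(A_s)\Id$. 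The lower bound $\lambda_i(C_s)\ge\lambda_{\min}(A_s)$ follows identically from $A_s-\lambda_{\min}(A_s)\Id\succeq 0$. Interchanging the roles of $A_s$ and $B_s$ gives the same two bounds with $B_s$, and taking the tighter of each pair yields the claimed double inequality.

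The corollaries are then immediate. Since $C_s$ is Hermitian positive definite, $\|C_s^{-1}\|_2=1/\lambda_{\min}(C_s)$; combining $\lambda_{\min}(C_s)\ge\max(\lambda_{\min}(A_s),\lambda_{\min}(B_s))$ with the elementary identity $1/\max(x,y)=\min(1/x,1/y)$ gives $\|C_s^{-1}\|_2\le\min(\|A_s^{-1}\|_2,\|B_s^{-1}\|_2)$. For the condition number I would pair $\lambda_{\max}(C_s)\le\lambda_{\max}(A_s)$ with $\lambda_{\min}(C_s)\ge\lambda_{\min}(A_s)$ to get $\kappa_2(C_s)\le\lambda_{\max}(A_s)/\lambda_{\min}(A_s)=\kappa_2(A_s)$, and symmetrically $\kappa_2(C_s)\le\kappa_2(B_s)$, hence $\kappa_2(C_s)\le\min(\kappa_2(A_s),\kappa_2(B_s))$. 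Finally, if, say, $B$ is diagonal, then every off-diagonal entry of $B_s$ vanishes and $B_s=\Id$, so $C_s=A_s\circ\Id=\Id$; since $A_s$ has unit diagonal its trace is $\ell$, forcing $\lambda_{\max}(A_s)\ge 1\ge\lambda_{\min}(A_s)$, which collapses every inequality above to an equality.

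I expect the only genuinely delicate point to be the very first step: confirming that the diagonal normalization commutes with the Hadamard product so that $C_s=A_s\circ B_s$ holds \emph{exactly}, and then being careful that the cancellation $\Id\circ B_s=\Id$ rests precisely on the unit-diagonal normalization, not on any property of $A$ or $B$ themselves. Everything downstream is a routine application of the Schur product theorem together with monotonicity of eigenvalues under the Loewner order.
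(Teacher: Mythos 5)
Your proof is correct and follows essentially the same route as the paper: the key step in both is the identity $C_s=A_s\circ B_s$ together with the unit-diagonal normalization, after which the paper simply cites the Hadamard eigenvalue bound \cite[Theorem 5.3.4]{hor-joh-91} while you re-derive that bound from the Schur product theorem via $(\lambda_{\max}(A_s)\Id-A_s)\circ B_s\succeq 0$ and $\Id\circ B_s=\Id$. Your explicit trace argument for the equality case when $A$ or $B$ is diagonal is a welcome detail the paper leaves unstated.
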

\begin{proof}
The key observation is that $C_s=A_s\circ B_s$, and the proof completes by invoking \cite[Theorem 5.3.4]{hor-joh-91}, which states the following general property of the Hadamard product
$$
\lambda_{\min}(A_s)\lambda_{\min}(B_s)\leq \min_{i} (A_s)_{ii} \lambda_{\min}(B_s) \leq \lambda_i(C_s) \leq \max_{i} (A_s)_{ii} \lambda_{\max}(B_s) \leq \lambda_{\max}(A_s)\lambda_{\max}(B_s) ,
$$	
in which we can also swap the roles of $A_s$ and $B_s$. Finally, note that $(A_s)_{ii}=1$ for all $i$.
\end{proof}

\begin{remark}\label{REM:k(C_s)-improvement}
	{\em
		It should be intuitively clear that the Hadamard product $C=A\circ B$ of a positive definite and a positive semidefinite matrix with nonzero diagonal should not worsen the scaled condition number, i.e. that $\kappa_2(C_s)$ is expected to be better than both $\kappa_2(A_s)$ and $\kappa_2(B_s)$.
		Indeed, $C_s$ has unit diagonal and the off-diagonal entries are 
		$$
		(C_s)_{ij} = \frac{c_{ij}}{\sqrt{c_{ii}c_{jj}}} = \frac{a_{ij}}{\sqrt{a_{ii}a_{jj}}}
		\frac{b_{ij}}{\sqrt{b_{ii}b_{jj}}} = (A_s)_{ij} (B_s)_{ij} ,
		$$
		where by the (semi)definiteness of $A$ and $B$ both factors on the right hand side are in modulus below one. That is, the Hadamard products increases the dominance of the diagonal of $C_s$ over any off-diagonal position, as compared to the dominance of the corresponding diagonal entries in $A_s$ and $B_s$. Hence, it is possible that $\kappa_2(C_s)\ll \min( \kappa_2(A_s), \kappa_2(B_s) )$.
	}	
\end{remark}
In the following example we illustrate such a situation, where $R^* R$ and $\overline{\Vanderm_{\ell,m}\Vanderm_{\ell,m}^*}$ are so close to the boundary of the cone of positive definite matrices that they numerically appear as indefinite, but their  Hadamard product posseses numerical Cholesky factor.
\begin{example}\label{EX:bad-normal-2}
	{\em
		With the notation of Example \ref{EX:bad-normal-1}, we use the same $\Vanderm_{\ell,m}$ but change the definition of $R$ to 
		$$
R = \begin{pmatrix} 1 & 1 & 1 \cr 0 & \xi & \xi \cr 0 & 0 & \xi/2	
\end{pmatrix} = \left( \begin{smallmatrix}	
1.000000000000000e+00  &   1.000000000000000e+00   &   1.000000000000000e+00 \cr
0  &   1.490116119384766e-08  &   1.490116119384766e-08 \cr
0           &              0   &  7.450580596923828e-09
\end{smallmatrix}\right) .
$$
If we repeat the experiment with the Cholesky factorizations, we obtain
 \\

\begin{minipage}[t]{0.5\textwidth}
\begin{verbatim}
>> chol(Vlm*Vlm')
Error using chol
Matrix must be positive definite.
\end{verbatim}
\end{minipage}
\begin{minipage}[t]{0.5\textwidth}
\begin{verbatim}
>> chol(R'*R)
Error using chol
Matrix must be positive definite.
\end{verbatim}
\end{minipage}

\begin{verbatim}
>> T0 = chol((R'*R).*(Vlm*Vlm'))
T0 =
1.000000000000000e+00     1.000000000000000e+00     1.000000002980232e+00
                    0     1.490116119384765e-08     1.999999880790710e-01
                    0                         0     4.079214149695062e-02
\end{verbatim}
The condition number of $\texttt{T0}$ is estimated to $8.2\cdot 10^8$, and its inverse is used in backward and forward substitutions when solving the normal equations, see (\ref{eq:alpha:normal:eq}).	

Note, however, that in this case $\kappa_2(C_s) \roff > 2$, and perturbation theory \cite{demmel-89-Cholesky}, \cite{drm-oml-ves-94} provides no guarantee for the accuracy of the computed Cholesky factor. (In fact, in Example \ref{EX:bad-normal-3} below, we argue that $\texttt{T0}$ is a pretty bad approximation.)
}
\end{example}

\begin{corollary}\label{COR:TM:Cond:A*B}
Let $C\equiv (R^* R) \circ (\overline{\Vanderm_{\ell,m} \WW^2 \Vanderm_{\ell,m}^*})$, 
$C_s= (c_{ij}/\sqrt{c_{ii}c_{jj}})$. Further, let $R = R_{c} \Delta_r$ and $\Vanderm_{\ell m}\WW = \Delta_v (\Vanderm_{\ell m}\WW)_{r}$ with diagonal scaling matrices $\Delta_r$ and $\Delta_v$ such that $R_{c}$ has unit columns and 
$(\Vanderm_{\ell m}\WW)_{r}$ has unit rows (in Euclidean norm). Then
$$
\kappa_2(C_s) \leq \min ( \kappa_2(R_c)^2, \kappa_2((\Vanderm_{\ell, m}\WW)_{r})^2).
$$	
\end{corollary}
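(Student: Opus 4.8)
The plan is to derive the bound on $\kappa_2(C_s)$ by recognizing that $C_s$ is the scaled version of the Hadamard product $C = A \circ B$ with $A = R^*R$ and $B = \overline{\Vanderm_{\ell,m}\WW^2\Vanderm_{\ell,m}^*}$, and then applying Theorem \ref{TM:Cond:A*B} together with an interpretation of the column/row scalings in terms of the given factorizations $R = R_c \Delta_r$ and $\Vanderm_{\ell,m}\WW = \Delta_v (\Vanderm_{\ell,m}\WW)_r$. The key observation is that the scaled matrices $A_s$ and $B_s$ appearing in Theorem \ref{TM:Cond:A*B} are exactly the Gram matrices of the column/row-normalized factors.

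First I would make explicit the connection between the diagonal scalings. Since $A = R^*R$, its diagonal entries satisfy $a_{ii} = \|R(:,i)\|_2^2$, so the normalizing diagonal $D_A = \diag(\sqrt{a_{ii}})$ coincides with $\Delta_r$ (the diagonal that extracts the column norms of $R$). Hence
\begin{equation}
A_s = D_A^{-1} A D_A^{-1} = \Delta_r^{-1} R^* R \Delta_r^{-1} = (R\Delta_r^{-1})^*(R\Delta_r^{-1}) = R_c^* R_c,
\end{equation}
where $R_c = R\Delta_r^{-1}$ has unit columns. Similarly, since $B = \overline{\Vanderm_{\ell,m}\WW^2\Vanderm_{\ell,m}^*} = \overline{(\Vanderm_{\ell,m}\WW)(\Vanderm_{\ell,m}\WW)^*}$, its diagonal entries are the squared row norms of $\Vanderm_{\ell,m}\WW$, so the normalizer is $\Delta_v$ and
\begin{equation}
B_s = \Delta_v^{-1} B \Delta_v^{-1} = \overline{(\Vanderm_{\ell,m}\WW)_r (\Vanderm_{\ell,m}\WW)_r^*},
\end{equation}
with $(\Vanderm_{\ell,m}\WW)_r = \Delta_v^{-1}\Vanderm_{\ell,m}\WW$ having unit rows. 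Complex conjugation leaves singular values unchanged, so $\kappa_2(B_s) = \kappa_2((\Vanderm_{\ell,m}\WW)_r)^2$, and likewise $\kappa_2(A_s) = \kappa_2(R_c)^2$ since $A_s$ is the Gram matrix of $R_c$.

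With these identifications in hand, I would simply invoke Theorem \ref{TM:Cond:A*B}, which gives $\kappa_2(C_s) \leq \min(\kappa_2(A_s), \kappa_2(B_s))$. Substituting the two Gram-matrix computations yields
\begin{equation}
\kappa_2(C_s) \leq \min(\kappa_2(R_c)^2, \kappa_2((\Vanderm_{\ell,m}\WW)_r)^2),
\end{equation}
which is the claimed bound. The only point requiring a little care is verifying that the scaled matrix $C_s$ of the Hadamard product $C = A\circ B$ is genuinely $A_s \circ B_s$ with the same diagonal normalizers — but this is precisely the identity $(C_s)_{ij} = (A_s)_{ij}(B_s)_{ij}$ established in Remark \ref{REM:k(C_s)-improvement}, so Theorem \ref{TM:Cond:A*B} applies verbatim. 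I expect no genuine obstacle here; the corollary is essentially a dictionary translation of Theorem \ref{TM:Cond:A*B} into the language of the factored data $R$ and $\Vanderm_{\ell,m}\WW$, and the main thing to get right is matching the Van der Sluis-style diagonal scalings $D_A, D_B$ to the column/row normalizations $\Delta_r, \Delta_v$ supplied in the hypothesis.
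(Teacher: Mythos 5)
Your proposal is correct and follows essentially the same route as the paper: identify $A=R^*R$, $B=\overline{\Vanderm_{\ell,m}\WW^2\Vanderm_{\ell,m}^*}$, observe that $A_s=R_c^*R_c$ and $B_s=\overline{(\Vanderm_{\ell,m}\WW)_r(\Vanderm_{\ell,m}\WW)_r^*}$, and invoke Theorem \ref{TM:Cond:A*B}. You are in fact slightly more explicit than the paper in matching the normalizers $\Delta_r,\Delta_v$ to the Van der Sluis scalings and in noting that the conjugation does not affect the condition number.
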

\begin{proof}
Note that, with the notation $A=R^* R$, $B=\overline{\Vanderm_{\ell,m}\WW^2 \Vanderm_{\ell,m}^*}$,	we can apply Theorem \ref{TM:Cond:A*B}, where we have
$A_s = R_c^* R_c$, $B_s=(\Vanderm_{\ell m}\WW)_{r}(\Vanderm_{\ell, m}\WW)_{r}^*$.
\end{proof}
\begin{example}\label{EX:kappa(C_s)}
	{\em
According to Theorem \ref{TM:VanDerSluis} and Remark \ref{REM:Scond}, $\kappa_2(C_s)$ is potentially much smaller than $\kappa_2(C)$, and, by the discussion in \S \ref{SSS=Cond-est}, the results may be much better than predicted by the classical perturbation theory based on $\kappa_2(C)$. And, we can very precisely determine whether the normal equation solution will succeed  by computing/estimating $\kappa_2(C_s)$. 

This can be illustrated e.g. with the simulation data of a 2D model obtained by depth averaging the Navier--Stokes equations for a shear flow in a thin layer of electrolyte suspended on a thin lubricating layer of a dielectric fluid; see \cite{Tithof-2017JFM-Bif-Q2D-Kolmogorov}, \cite{Suri-2014PhFl-Velocity-Kolmogorov-flow} for more detailed description of the experimental setup and numerical simulations.\footnote{We thank Michael Schatz, Balachandra Suri, Roman Grigoriev and Logan Kageorge from the Georgia Institute of Technology for providing us with the data.}		
		
The  data\footnote{We used this dataset in \cite{drmac-mezic-mohr-DFT-arxiv-2018}	for testing a new algorithm for representing the snapshots using thr Koopman modes.	} consists of $n_t=1201$ snapshots of dimensions $n_x\times n_y$ ($n_x=n_y=128$), representing the (scalar) vorticity field.  The $n_x\times n_y \times n_t$ tensor is unfolded into $n_x\cdot n_y \times n_t$ matrix $(\f_1,\ldots, \f_{n_t})$, and $\X_m$ is of dimensions $16384\times 1200$.

After performing a DMD analysis, we have computed the values of the condition numbers of $C\equiv (R^* R) \circ (\overline{\Vanderm_{\ell,m} \Vanderm_{\ell,m}^*})$ and $C_s$ as follows:
$$
\kappa_2(C) > 10^{87} \gg \kappa_2(C_s) \approx \texttt{8.504071414461372e+01}.
$$	
Using (\ref{eq:Cholesky:Gamma}), we can conclude that the Cholesky factor of $C$ can be computed to high accuracy, despite the fact that $\kappa_2(C) \gg 1/\roff$. A closer look reveals that the high condition number is due to grading, i.e. it is in the diagonal $D_C$. More precisely, as a result of the Ritz values being from both sides of the unit circle, the diagonal entries of $C$ vary over several orders of magnitude, see Figure \ref{zd:fig:diag_of_C}.
\begin{figure}[H]
	\begin{minipage}[c]{0.70\textwidth}
		\includegraphics[width=\textwidth, height=0.50\textwidth]{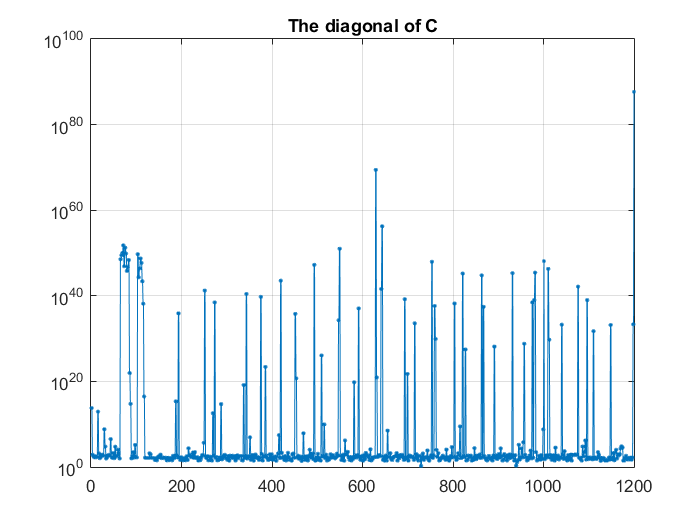}
	\end{minipage}\hfill
	\begin{minipage}[c]{0.30\textwidth}
		\caption{\label{zd:fig:diag_of_C} The diagonal entries of $C$ vary from 
			$\texttt{1.682014829394577e+00}$ to $\texttt{4.574179501782303e+87}$. The absolute values of the computed Ritz values go (roughly) from  $0.63$ to $1.08$.
		} 
	\end{minipage}
\end{figure}
Following  the discussion in Remark \ref{REM:k(C_s)-improvement} and Examples \ref{EX:bad-normal-1} and \ref{EX:bad-normal-2}, the bound of Corollary \ref{COR:TM:Cond:A*B} is in this example (luckily) an extreme overestimate because 
	$\kappa_2(R_c)^2 =\texttt{3.052938825507679e+13}$, $\kappa_2((\Vanderm_{\ell, m})_{r})^2 = \texttt{9.620903374928321e+14}$.
	However, Example \ref{EX:bad-normal-1} illustrates the danger of potential failure of the formula (\ref{eq:alpha:normal:eq}). 
	}
	\end{example}
	
\begin{remark}
{\em
In the DMDSP, the augmented Lagrangian method works with  $C+\rho I$, with suitable parameter $\rho>0$. The Cholesky factor of $C+\rho I$ is used to repeatedly solve linear systems of equations. With any moderate value of $\rho$, the diagonal of $C$	in Example \ref{EX:kappa(C_s)} will behave similarly as shown in Figure \ref{zd:fig:diag_of_C}.
}	
\end{remark}
The condition number $\kappa_2(C_s)$ from Corollary \ref{COR:TM:Cond:A*B} will determine the accuracy if we can compute $C$ so that the error $\delta_0 C$ in computing $C$ explicitly is such that $|\delta_0 c_{ij}|/\sqrt{c_{ii}c_{jj}}$ is small for all $i$, $j$.

\begin{proposition}
Let $A=X^* X$, $B=Y^* Y$ with $X\in\C^{m_x\times\ell}$, $Y\in\C^{m_y\times\ell}$, and let $C=A\circ B$, and $\widetilde{C}=C+\delta_0 C = computed(computed(X^*X)\circ computed(Y^* Y))$. Then, for all $i$, $j$, 
$$
|\delta_0 c_{ij}| \leq (O(m_x\roff)+O(m_y\roff)+O(m_x m_y\roff^2)) \sqrt{c_{ii}c_{jj}} .
$$
\end{proposition}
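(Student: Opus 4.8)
The plan is to carry out a standard componentwise floating-point error analysis, splitting the computation of $\widetilde{C}$ into the three rounding stages implied by the formula $\widetilde{C}=\mathrm{fl}(\mathrm{fl}(X^*X)\circ \mathrm{fl}(Y^*Y))$: first the Gram matrix $\widetilde{A}=\mathrm{fl}(X^*X)$, then $\widetilde{B}=\mathrm{fl}(Y^*Y)$, and finally the entrywise product $\widetilde{c}_{ij}=\mathrm{fl}(\widetilde{a}_{ij}\widetilde{b}_{ij})$. The two tools that do all the work are the classical rounding bound for a length-$m$ inner product together with the Cauchy--Schwarz inequality, and the semidefiniteness bounds $|a_{ij}|\leq\sqrt{a_{ii}a_{jj}}$, $|b_{ij}|\leq\sqrt{b_{ii}b_{jj}}$, which hold because $A$ and $B$ are Gram matrices. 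Note that $c_{ii}=a_{ii}b_{ii}$, so $\sqrt{c_{ii}c_{jj}}=\sqrt{a_{ii}a_{jj}}\,\sqrt{b_{ii}b_{jj}}$; this factorization is what lets the $A$- and $B$-errors be attached to the correct $m_x$- and $m_y$-dependent terms.

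First I would handle the Gram stage. Each entry $a_{ij}=\sum_{k=1}^{m_x}\overline{x_{ki}}x_{kj}$ is an inner product of length $m_x$, and the standard analysis gives $\widetilde{a}_{ij}=a_{ij}+\delta a_{ij}$ with $|\delta a_{ij}|\leq \gamma_{m_x}\sum_{k}|x_{ki}||x_{kj}|$, where $\gamma_{m_x}=m_x\roff/(1-m_x\roff)=O(m_x\roff)$. Applying Cauchy--Schwarz to the sum yields $\sum_k|x_{ki}||x_{kj}|\leq (\sum_k|x_{ki}|^2)^{1/2}(\sum_k|x_{kj}|^2)^{1/2}=\sqrt{a_{ii}a_{jj}}$, where the diagonal entries are sums of nonnegative terms and hence coincide with the exact $a_{ii}$ up to a benign relative $O(m_x\roff)$. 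This delivers $|\delta a_{ij}|\leq O(m_x\roff)\sqrt{a_{ii}a_{jj}}$, and the identical argument for $B$ gives $|\delta b_{ij}|\leq O(m_y\roff)\sqrt{b_{ii}b_{jj}}$.

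Next I would combine the stages. Writing $\widetilde{a}_{ij}=a_{ij}+\delta a_{ij}$, $\widetilde{b}_{ij}=b_{ij}+\delta b_{ij}$, and $\widetilde{c}_{ij}=(1+\eta_{ij})\widetilde{a}_{ij}\widetilde{b}_{ij}$ with $|\eta_{ij}|\leq\roff$, I expand the product so that $\delta_0 c_{ij}=\widetilde{c}_{ij}-a_{ij}b_{ij}$ becomes a sum of four contributions: the two cross terms $a_{ij}\delta b_{ij}$ and $b_{ij}\delta a_{ij}$, the second-order term $\delta a_{ij}\delta b_{ij}$, and the final-rounding term $\eta_{ij}\widetilde{a}_{ij}\widetilde{b}_{ij}$. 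Using $|a_{ij}|\leq\sqrt{a_{ii}a_{jj}}$ and $|b_{ij}|\leq\sqrt{b_{ii}b_{jj}}$ together with the two bounds from the previous paragraph, the cross terms are bounded by $O(m_y\roff)\sqrt{c_{ii}c_{jj}}$ and $O(m_x\roff)\sqrt{c_{ii}c_{jj}}$ respectively, and $|\delta a_{ij}\delta b_{ij}|\leq O(m_xm_y\roff^2)\sqrt{c_{ii}c_{jj}}$. For the last term, $|\widetilde{a}_{ij}\widetilde{b}_{ij}|\leq(1+O(m_x\roff))(1+O(m_y\roff))\sqrt{c_{ii}c_{jj}}$, so after multiplying by $|\eta_{ij}|\leq\roff$ it is $O(\roff)\sqrt{c_{ii}c_{jj}}$ plus strictly higher-order pieces, and since $m_x,m_y\geq 1$ this is absorbed into the $O(m_x\roff)+O(m_y\roff)$ terms. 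Summing the four bounds gives exactly the claimed estimate.

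I do not expect a genuine conceptual obstacle here; the result is a routine but careful accounting. The only points demanding attention are bookkeeping ones: ensuring the error is expressed in terms of the \emph{exact} diagonal $c_{ii}=a_{ii}b_{ii}$ (which the Cauchy--Schwarz step supplies automatically, since the diagonals of $\widetilde A$, $\widetilde B$ are accurate sums of nonnegative quantities), and folding the final-multiply rounding and all second-order $\roff^2$ clutter cleanly into the three advertised terms without double counting. All constants are hidden inside the $O(\cdot)$ symbols, so no explicit constant tracking is required.
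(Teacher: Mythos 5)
Your proposal is correct and follows essentially the same route as the paper's proof: a standard inner-product rounding bound plus Cauchy--Schwarz to get $|\delta a_{ij}|\leq O(m_x\roff)\sqrt{a_{ii}a_{jj}}$ (and likewise for $B$), then expansion of $(a_{ij}+\delta a_{ij})(b_{ij}+\delta b_{ij})(1+\epsilon_{ij})$ and term-by-term bounding using $c_{ii}=a_{ii}b_{ii}$. Your treatment is if anything slightly more explicit about absorbing the final-multiply rounding term, which the paper folds into the $(1+\epsilon_{ij})$ factor without comment.
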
	
\begin{proof}
We follow the standard steps of floating point error analysis:
\begin{eqnarray*}
computed(X^* X) = A + \delta A,\;\;|\delta A|\leq O(m_x\roff) |X^*||X| ;\;\;
|\delta a_{ii}| \leq O(m_x\roff) a_{ii},
\end{eqnarray*}	
and,  by the Cauchy-Schwarz inequality, $|\delta a_{ij}| \leq O(m_x\roff) \sqrt{a_{ii}a_{jj}}$.  An analogous claim holds for $computed(Y^* Y)=B+\delta B$. Altogether, 
$$
\widetilde{c}_{ij}=(a_{ij}+\delta a_{ij})(b_{ij}+\delta b_{ij})(1+\epsilon_{ij})
= (c_{ij} + \delta a_{ij}b_{ij} + a_{ij}\delta b_{ij} + \delta a_{ij}\delta b_{ij})(1+\epsilon_{ij})
$$
where (since $c_{ii}=a_{ii}b_{ii}$ and $|b_{ij}|\leq\sqrt{b_{ii}b_{jj}}$)
$$
|\delta a_{ij}b_{ij}|\leq O(m_x\roff)\sqrt{a_{ii}a_{jj}}\sqrt{b_{ii}b_{jj}}=O(m_x\roff)\sqrt{c_{ii}c_{jj}}
$$
The remaining error terms are bounded in the same way.
\end{proof}
Hence, in a concrete finite precision computation, we will work with $\widetilde{C}=C+\delta_0 C$, so that the backward error (\ref{eq:dC/D}, \ref{eq:Chol-backward}) applies to $\widetilde{C}$. The scaled condition number $\kappa_2(\widetilde{C}_s)$ ($\widetilde{C}_s=(\widetilde{c}_{ij}/\sqrt{\widetilde{c}_{ii}\widetilde{c}_{jj}})$) will be moderate if $\kappa_2(C_s)$ is moderate, so all conclusions apply to $\widetilde{C}$ as well. Since both $\delta_0 C$ and $\delta C$ are of the same type, the overall perturbation in $C$ and its Cholesky factor is bounded in the same way. We omit the details for the sake of brevity.

\subsubsection{A decision tree}
Equipped with the results from \S \ref{SSS=Cond-est}, we can devise a strategy that chooses the most efficient procedure to deliver the output to the accuracy warranted by the data on the input. Our goal is to develop a reliable software tool that is capable of computing to reasonable accuracy even in the most extreme cases. 

Since the scaled condition number $\kappa_2(C_s)$ has been identified as the key parameter, we can safely proceed with solving the normal equations if we know a priori that it is moderate.
One way to establish that is to use the the upper triangular factor $R$ in the QR factorization $Z_{\ell}=Q\left(\begin{smallmatrix} R \cr 0\end{smallmatrix}\right)$ of the approximate eigenvectors (e.g. Ritz vectors or the refined Ritz vectors) that are usually computed as normalized, i.e. $\|Z_{\ell}(:,i)\|_2=1$ and $R_c\equiv R$. Independent of that normalization, by Corollary \ref{COR:TM:Cond:A*B}, the condition number that matters is $\kappa_2(R_c)\equiv \kappa_2((Z_{\ell})_c)$, where $(Z_{\ell})_c$ denotes the matrix $Z_{\ell}$ after normalizing its columns. 

Recall that the condition number of an $\ell\times \ell$ triangular matrix can be efficiently estimated at an $O(\ell^2)$ cost; see for instance the subroutine \texttt{XPOCON()} in \texttt{LAPACK}.
Moreover, since $Z_{\ell}=U_k \widetilde{B}_{\ell}$ (see Remark \ref{REM:POD-reconstruct}) and since $R$ can be computed directly from the $O(k\ell^2)$ QR factorization of the $k\times\ell$ matrix $\widetilde{B}_{\ell}$, we can  estimate $\kappa_2(R_c)$ at the cost of $O(k\ell^2)$. If $R$ is already available (computed for some other use), then the cost of estimating $\kappa_2(R_c)$ is only $O(\ell^2)$ and in that case we consider the estimate available as well. 
Hence, if one estimates that $\kappa_2(R_c)^2 \ll 1/\roff$, then one can safely use the normal equation solution (\ref{eq:alpha:normal:eq}). 

If an estimate for $\kappa_2(R_c)^2$ is not available (e.g. $R$ not computed) or if one concludes that $\kappa_2(R_c)^2\gtrapprox 1/\roff$, then one goes on and computes $C$ and an estimate for $\kappa_2(C_s)$, which is then tested against the threshold $1/\roff$.
We can organize this in a decision tree as in Algorithm \ref{zd:ALG:Decision-Tree}. It is designed for the case when the dimensions are sufficiently large and the efficiency of the formula (\ref{eq:alpha:normal:eq}) is desirable, but not at any price -- it is deployed only if it can warrants certain level of accuracy.

\begin{algorithm}[H]
	\caption{Decision tree for selecting a solution method for (\ref{zd:eq:g-S*alpha})}
	\label{zd:ALG:Decision-Tree}
	\begin{algorithmic}[1]
		\REQUIRE \  \\		
		\begin{itemize} 
			\item $R$, $\Lambda$, $\ell$, $m$
			\item Tolerance level $tol \in (1,1/\roff)$ for acceptable condition number estimate.		
		\end{itemize}
		%
\IF{ $R$  available and $\kappa_2(R_c)^2 \leq tol$ }
\STATE Solve (\ref{zd:eq:g-S*alpha}) by normal equations, $\vec{\bfalpha} = [ (R^* R) \circ (\overline{\Vanderm_{\ell,m} \WW\Vanderm_{\ell,m}^*}) ]^{-1} [ (\overline{\Vanderm_{\ell,m}\WW}\circ (Z_{\ell}^* \X_m\WW))\eb ]$
\ELSE
\STATE Compute $C= (Z_{\ell}^*Z_{\ell}) \circ (\overline{\Vanderm_{\ell,m}\WW^2 \Vanderm_{\ell,m}^*})$ and estimate $\kappa_2(C_s)$
\IF{$\kappa_2(C_s)\leq tol$}
\STATE Solve (\ref{zd:eq:g-S*alpha}) by normal equations, $\vec{\bfalpha} = [ (Z_{\ell}^*Z_{\ell}) \circ (\overline{\Vanderm_{\ell,m}\WW^2 \Vanderm_{\ell,m}^*}) ]^{-1} [ (\overline{\Vanderm_{\ell,m}\WW}\circ (Z_{\ell}^* \X_m\WW))\eb ]$
\ELSE
\STATE \framebox{Solve (\ref{zd:eq:g-S*alpha}) using QR factorization based solver, without squaring the condition number.}
\ENDIF
\ENDIF
	\end{algorithmic}
\end{algorithm}
\noindent In the next section, we discuss \framebox{Line 8} in Algorithm \ref{zd:ALG:Decision-Tree}.

\begin{example}\label{EX:bad-normal-3}
	{\em
(Continuation of Example \ref{EX:bad-normal-2})
		From Proposition \ref{PROP:S=KR}, the (upper triangular) Cholesky factor of $(R^* R) \circ (\overline{\Vanderm_{\ell,m} \Vanderm_{\ell,m}^*})$ can be equivalently obtained from the QR factorization of $S=\Vanderm_{\ell,m}^T\odot R$. If we compute $S$ explicitly and compute its QR factorization, normalized to have positive diagonal, then
	\begin{verbatim}
	>> [Q,T] = qr(S,0) ; T = diag(sign(diag(T)))*T
	T =
	1.000000000000000e+00     1.000000000000000e+00     1.000000002980232e+00
	                    0     2.107342425544703e-08     1.414213575017149e-01
	                    0                         0     1.471869344809795e-01	
	\end{verbatim}
	Note that the difference between (theoretically identical matrices) $\texttt{T}$ and $\texttt{T0}$, computed in Example \ref{EX:bad-normal-2},
	 is significant. Since the condition number of $S$ is $\kappa_2(S)\approx 1.6 \cdot 10^8$, we know that $\texttt{T}$ is provably more accurate than $\texttt{T0}$.  
	
	The consequence of this difference in applications of the computed triangular factor is easily illustrated by the following simple computation (that is used for solving (\ref{eq:alpha:normal:eq}))
	\begin{verbatim}
	>> [T0\(T0'\ones(3,1)) T\(T'\ones(3,1))]
	ans =
	 2.703847231205138e+01     2.000000063280420e+00
	-2.603847410305737e+01    -1.000000063280429e+00
	 1.791005994071227e-06     9.429518722743198e-15
	\end{verbatim} 
}
\end{example}

\noindent Although small dimensional and entirely synthetic, Examples  \ref{EX:bad-normal-2} and \ref{EX:bad-normal-3} should be convincing enough to call for the development of an algorithm that solves the LS problem without the explicitly computed $(R^* R) \circ (\overline{\Vanderm_{\ell,m} \Vanderm_{\ell,m}^*})$. We tackle this problem in the next section. 
For the sake of brevity, we will consider the case $\WW=\Id_m$. The more involved general weighted case is left for the future work.

\section{QR factorization based solvers}\label{SS=QR-LS-solver}
\noindent A numerically more robust procedure for solving the LS problem (\ref{zd:eq:g-S*alpha}) needs the QR factorization of the $\ell m \times \ell$ matrix $S$, with the complexity of $O(m\ell^3)$ if an off-the-shelf procedure is deployed, e.g. \texttt{qr} from Matlab or \texttt{xGEQRF}, \texttt{xGEQP3} from LAPACK. This $O(m\ell^3)$ factor will then dominate the overall cost of the LS solution, even if one decides to solve (\ref{zd:eq:g-S*alpha}) using the SVD of $S$. (Since $S$ is tall and skinny, the SVD of $S$ is computed more efficiently if the computation  starts with the QR factorization and proceeds with the SVD of the triangular factor.)  

Squaring the condition number of $S$ in the explicit formula 
(\ref{eq:alpha:normal:eq}) representing (\ref{eq:S-dagger}) is avoided if we use the tall QR factorization $S=Q_S R_S$; then\footnote{In this section we use the notation from \S \ref{SS=Normal_Eq_solution}.} 
\begin{equation}\label{eq:RQg}
\vec{\bfalpha}=R_S^{-1}(Q_S^*\vec{\g}). 
\end{equation}
Note that for computing $Q_S^*\vec{\g}$ we do not need the explicitly formed factor $Q_S$. Indeed, since $Q_S^*$ is built from a sequence of elementary unitary matrices (Householder reflectors or Givens rotations) so that $Q_S^* S = R_S$, it suffices to apply the same matrices to $\vec{\g}$ as it was the $(\ell+1)$st column of $S$. 
\subsection{Corrected semi-normal approach}\label{SS=Corrected-Semi-Normal}
\noindent If we want to avoid the additional cost for computing $Q_S^* \vec{\g}$, in particular in view of the  efficient formula (\ref{eq:G}) for $S^*\vec{\g}$, then we can settle for using the QR factorization of $S$ only for implicit computing of the Cholesky factor of $(R^* R) \circ (\overline{\Vanderm_{\ell,m} \Vanderm_{\ell,m}^*})$ in (\ref{eq:alpha:normal:eq}), to avoid the problems illustrated in Examples \ref{EX:bad-normal-1} and \ref{EX:bad-normal-2}. Then, the seminormal solution \cite{BJORCK198731} is
\begin{equation}\label{eq:alpha:new}
\vec{\bfalpha}=R_S^{-1}(R_S^{-*}(S^*\vec{\g})).
\end{equation}
This method only partially alleviates the problem of ill-conditioning. It computes more accurate triangular factor than  the pure normal equations approach (using the Cholesky factor of $S^*S$, which may even fail), but in general it is not much better than the normal equations solver. 

However, if supplemented by a correction step, the seminormal solution becomes nearly as good as (sometimes even better than) the QR factorization based solver. 
The correction procedure, done in the same working precision, first computes the residual
\begin{equation}
r = \vec{\g} - S \vec{\bfalpha}
\end{equation} 
and then computes the correction $\delta \vec{\bfalpha}$  as 
\begin{equation}\label{eq:alpha:correction}
\delta \vec{\bfalpha}=R_S^{-1}(R_S^{-*}(S^* r))
\end{equation}
and the corrected solution $\vec{\bfalpha}_* = \vec{\bfalpha} + \delta \vec{\bfalpha}$. This process can be repeated. For an error analysis in favor of this scheme see \cite{BJORCK198731}. 

In an efficient  software implementation, we use the structure of $S$ and organize the data to increase locality, i.e. prefer matrix multiplications. 
A prototype of the computational scheme is given in Algorithm \ref{zd:ALG:Semi-Normal-LS}. (In lines 2. and 3. we give hints to an implementer, regarding high performance implementation based on the libraries \texttt{BLAS} and \texttt{LAPACK}.)
\begin{algorithm}[H]
	\caption{ Corrected semi-normal solution of  (\ref{zd:eq:g-S*alpha}) }
	\label{zd:ALG:Semi-Normal-LS}
	\begin{algorithmic}[1]
		\REQUIRE $R$, $\Lambda$, $G$, $S$ (Here applies the notation from \S \ref{SS=Normal_Eq_solution}.)
		\ENSURE Corrected solution $\vec{\bfalpha}_*$
		\STATE Compute the triangular factor $R_S$ in the QR factorization of $S$.
		\STATE $S^*\vec{\g} = [ (\overline{\Vanderm_{\ell,m}}\circ (R^* G))\eb ]$	\COMMENT{Use \texttt{xTRMM} from \texttt{BLAS 3}.}
        \STATE $\vec{\bfalpha}=R_S^{-1}(R_S^{-*}(S^*\vec{\g}))$\COMMENT{Use  \texttt{xTRSM} or \texttt{xTRTRS} or \texttt{xTRSV} from \texttt{LAPACK}.}
        \STATE $r_{\Box} = G - R \begin{pmatrix} \vec{\bfalpha} & \Lambda \vec{\bfalpha} & \Lambda^2 \vec{\bfalpha} & \ldots & \Lambda^{m-1}\vec{\bfalpha}\end{pmatrix} \equiv G - R \mathrm{diag}(\vec{\bfalpha}) \Vanderm_{\ell,m}$ 
        \STATE $S^* r = [ (\overline{\Vanderm_{\ell,m}}\circ (R^* r_{\Box}))\eb ]$ \COMMENT{Use \texttt{xTRMM} from \texttt{BLAS 3}.}
        \STATE $\delta \vec{\bfalpha}=R_S^{-1}(R_S^{-*}(S^* r))$ \COMMENT{Use  \texttt{xTRSM} or \texttt{xTRTRS} or \texttt{xTRSV} from \texttt{LAPACK}.}
        \STATE $\vec{\bfalpha}_* = \vec{\bfalpha} + \delta \vec{\bfalpha}$
	\end{algorithmic}
\end{algorithm}

It is certainly desirable to have an efficient QR factorization based LS solver that can exploit the particular structure of $S$ and thus lower the $O(m\ell^3)$ cost of a structure oblivious straightforward factorization. With such a factorization, one can solve the LS problem using (\ref{eq:RQg}) or the corrected semi-normal approach as in Algorithm \ref{zd:ALG:Semi-Normal-LS}.
 
The  recursive structure of the block rows of $S$ can indeed be exploited to compute its QR factorization at the cost of $O(\ell^3\log_2 m)$. In the next section we provide the details.

\subsection{QR factorization of $S$}\label{SS=QRF(S)}
\noindent We now provide the details of the new algorithm for computing the QR factorization of $S$.
The recursive structure of the algorithm is first described in \S \ref{SS=ALG-m=2p} for the simplest case of $m=2^p$, with the details of the kernel routine given in \S \ref{SSS-QR(A;B)}.  Section \ref{SS=ALG-general-m} provides the scheme with an arbitrary number of snapshots $m$. 

\subsubsection{The case $m=2^p$.}\label{SS=ALG-m=2p}
The basic idea is illustrated in (\ref{q:recursive:QR:S}) for $m=16$, and it is, in a sense, analogous to the FFT divide and conquer scheme.

\begin{algorithm}[H]
	\caption{Recursive QR factorization of $S$ in (\ref{zd:eq:g-S*alpha}) for $m=2^p$}
	\label{ALG:qr(S)-global}
	\begin{algorithmic}
	\REQUIRE Upper triangular $R\in\C^{\ell\times\ell}$; diagonal $\bfLambda\in\C^{\ell\times\ell}$; number of snapshots $m=2^p$
	\ENSURE Upper triangular QR  factor $R_S=T_p$ of $S\in\C^{2^p\ell \times \ell}$ in (\ref{zd:eq:g-S*alpha})
	\STATE
\begin{equation}\label{q:recursive:QR:S}
\boxed{\boxed{\begin{array}{c|r|r|r||r}
\boxed{\boxed{T_4}} & \longleftarrow \boxed{T_3} & \longleftarrow \boxed{T_2}    &\longleftarrow \boxed{T_1}          &\longleftarrow R\bfLambda^0 \\
\0  &\0 & \0       & \0           &\longleftarrow R\bfLambda^1 \\\hline
\0  & \0 & \0     & \longleftarrow T_1\bfLambda^2 & R\bfLambda^2 \\
\0 & \0 & \0           & \0           & R\bfLambda^3 \\\hline\hline
\0 & \0 & \longleftarrow T_2\bfLambda^4 & T_1\bfLambda^4 & R\bfLambda^4 \\
\0 & \0 & \0           & \0           & R\bfLambda^5 \\\hline
\0 & \0 & \0           & T_1\bfLambda^6 & R\bfLambda^6 \\
\0 & \0 & \0           & \0           & R\bfLambda^7 \\\hline\hline\hline
\0 & \longleftarrow T_3\bfLambda^8 & T_2\bfLambda^8 & T_1\bfLambda^8 & R\bfLambda^8 \\
\0 & \0 & \0             & \0           & R\bfLambda^9 \\\hline
\0 & \0 & \0             & T_1\bfLambda^{10} & R\bfLambda^{10} \\
\0 & \0 & \0             & \0              & R\bfLambda^{11} \\\hline\hline
\0 & \0 & T_2\bfLambda^{12}& T_1\bfLambda^{12} & R\bfLambda^{12} \\
\0 & \0 & \0             & \0              & R\bfLambda^{13} \\\hline
\0 & \0 & \0             & T_1\bfLambda^{14} & R\bfLambda^{14} \\
\0 & \0 & \0             & \0              & R\bfLambda^{15}
\end{array}}},\;\;\;
\boxed{
\begin{array}{ll}
\small{1}: & T_0 = R \cr
\small{2}: & \textbf{for}\;\; i=1:p \;\;\textbf{do}\cr 
3: & \begin{pmatrix} \boxed{T_i} \cr \0\end{pmatrix} = \texttt{qr}( \begin{pmatrix} \boxed{T_{i-1}} \cr T_{i-1}\bfLambda^{2^{i-1}} \end{pmatrix})\cr 
4: & \textbf{end for}
\end{array}
}
\end{equation}
\COMMENT{For a simple Matlab implementation see \S \ref{APP:QR-Khatri-RAO-VTR}.}
\end{algorithmic}
\end{algorithm}

Note that the core operation in the $i$th step of the above scheme is the QR factorization of the  $2\ell \times \ell$ structured matrix
\begin{equation}\label{eq:QR-structured}
\begin{pmatrix} 
T_{i-1} \cr T_{i-1}\bfLambda^{2^{i-1}} 
\end{pmatrix} = \left( \begin{smallmatrix} * & * & * & * \cr
0 & * & * & * \cr
0 & 0 & * & * \cr
0 & 0 & 0 & * \cr
\star & \star & \star & \star \cr
0 & \star & \star & \star \cr
0 & 0 & \star & \star \cr
0 & 0 & 0 & \star \cr
\end{smallmatrix}
\right),\;\;\mbox{i.e. implicitly of}\;\;\begin{pmatrix} 
T_{i-1} \cr \0_{(2^{i-1}-1)\ell,\ell} \cr T_{i-1}\bfLambda^{2^{i-1}} \cr \0_{(2^{i-1}-1)\ell,\ell} \end{pmatrix}
\end{equation}
and, thus, it can be computed more efficiently than in the general case of a $2\ell \times \ell$ dense matrix. 

\begin{remark}
{\em
	If we want to solve the LS problem using (\ref{eq:RQg}), then in Algorithm \ref{ALG:qr(S)-global} we need to include updating of $\vec{\g}$ for each $i=1,\ldots, p$. The algorithm then reads as follows: 
\begin{algorithmic}
\STATE $T_0=R$
\FOR {$i = 1 : p $}
\STATE $
\begin{pmatrix} \boxed{T_{i-1}} \cr T_{i-1}\bfLambda^{2^{i-1}} \end{pmatrix} = Q_i \boxed{T_i}
$
\COMMENT{Thin QR factorization. $Q_i$ is $2l\times l$.}
\FOR {$j = 0 : 2^i : 2^p - 2^i$}
\STATE $\vec{\g}(j\ell + 1 : (j+1)\ell) = Q_i^* \begin{pmatrix} \vec{\g}(j\ell + 1 : (j+1)\ell) \cr \vec{\g}((j+2^{i-1})\ell+1 : (j+2^{i-1}+1)\ell)\end{pmatrix}$
\ENDFOR
\ENDFOR
\end{algorithmic}
Note that after this sequence of updates, the array $\vec{\g}(1:\ell)$ contains the vector $Q_S^*\vec{\g}$ from (\ref{eq:RQg}); 
the least squares solution is then computed as the solution of the triangular system $T_p \vec{\bfalpha} = \vec{\g}(1:\ell)$.
}	
\end{remark}
\subsubsection{Details of the QR factorization of (\ref{eq:QR-structured})}\label{SSS-QR(A;B)}
The seemingly simple task to compute the QR factorization of two stacked triangular matrices (\ref{eq:QR-structured}) is an excellent case study for turning an algorithm into an efficient software implementation. 

Algorithm \ref{zd:ALG:Givens-QR} illustrates a straightforward scheme  to annihilate the $\ell(\ell+1)/2$ entries in the lower block in (\ref{eq:QR-structured}); it works for the general case of two independent upper triangular matrices stacked on top of each other and its cost is $\ell^3+O(\ell^2)$ \emph{flops}.

\begin{algorithm}[H]
	\caption{Givens QR factorization of the type (\ref{eq:QR-structured});}
	\label{zd:ALG:Givens-QR}
	\begin{algorithmic}[1]
		\REQUIRE Upper triangular matrices $A, B \in\C^{\ell\times \ell}$
\ENSURE The upper triangular factor in the QR factorization of $\begin{pmatrix} A\cr B\end{pmatrix} = \left( \begin{smallmatrix} * & * & * & * \cr
0 & * & * & * \cr
0 & 0 & * & * \cr
0 & 0 & 0 & * \cr
\star & \star & \star & \star \cr
0 & \star & \star & \star \cr
0 & 0 & \star & \star \cr
0 & 0 & 0 & \star \cr
\end{smallmatrix}\right)$
		\STATE $Q=\Id_{2\ell}$	
        \FOR {$j=1:\ell$}
        \FOR {$i=1:j$}
		\STATE Compute $2\times 2$ Givens rotation $\mathcal{G}$ such that 	$\mathcal{G}\begin{pmatrix} A(j,j)\cr B(i,j)\end{pmatrix}=\begin{pmatrix} \sqrt{|A(j,j)|^2+|B(i,j)|^2} \cr 0\end{pmatrix}$
		\STATE $\begin{pmatrix} A(j,j:\ell)\cr B(i,j:\ell)\end{pmatrix} := \mathcal{G} \begin{pmatrix} A(j,j:\ell)\cr B(i,j:\ell)\end{pmatrix} $; $( Q(:,i), Q(:,j) ) := ( Q(:,i), Q(:,j) )\mathcal{G}^* $
		\ENDFOR	
\ENDFOR
	\end{algorithmic}
\end{algorithm}
\noindent The nested loops in Algorithm \ref{zd:ALG:Givens-QR} access the matrix $B$ column-wise. Changing the loops into 
$$
\mathbf{for}\;\;i=1:\ell\;\; \{ \mathbf{for}\;\;j=i:\ell\;\; \{ 4:\; \ldots ; 5:\; \ldots \mathbf{end\; for}\}\;\mathbf{end\; for}\;\}
$$ 
will change the access to row-wise. The proper choice of loop ordering depends on the layout of the data matrices in the computer memory.

To enhance data locality, the annihilation strategy in Algorithm \ref{zd:ALG:Givens-QR} can be modified so that the communication between the arrays $A$ and $B$ is reduced to once per column -- it suffices to concentrate the mass of a column of $B$ into a single entry (e.g. using a single Householder reflector) and then move it up into the corresponding diagonal entry of $A$ using a single Givens rotation. 
This yields Algorithm \ref{zd:ALG:Householder-Givens-QR}.
\begin{algorithm}[H]
	\caption{Householder+Givens QR factorization of the type (\ref{eq:QR-structured});}
	\label{zd:ALG:Householder-Givens-QR}
	\begin{algorithmic}[1]
		\REQUIRE Upper triangular matrices $A, B \in\C^{\ell\times \ell}$
		\ENSURE The upper triangular factor in the QR factorization of $\begin{pmatrix} A\cr B\end{pmatrix} = \left( \begin{smallmatrix} * & * & * & * \cr
		0 & * & * & * \cr
		0 & 0 & * & * \cr
		0 & 0 & 0 & * \cr
		\star & \star & \star & \star \cr
		0 & \star & \star & \star \cr
		0 & 0 & \star & \star \cr
		0 & 0 & 0 & \star \cr
		\end{smallmatrix}\right)$
		%
		\FOR {$j=1:\ell$}
		\IF {$j>1$} 
		\STATE Compute Householder reflector $\mathcal{H} = I - \beta ww^*$ such that 
		$\mathcal{H} B(1:j,j) = \pm \| B(1:j,j)\|_2 e_1$
		\STATE $B(1,j) = \pm \| B(1:j,j)\|_2$
		\IF {$j<\ell$}
		\STATE Update $B$: $B(1:j,j+1:\ell) = B(1:j,j+1:\ell) - \beta w (w^* B(1:j,j+1:\ell))$
		\ENDIF
		\ENDIF
		\STATE Compute $2\times 2$ Givens rotation $\mathcal{G}$ such that 	$\mathcal{G}\begin{pmatrix} A(j,j)\cr B(1,j)\end{pmatrix}=\begin{pmatrix} \sqrt{|A(j,j)|^2+|B(1,j)|^2} \cr 0\end{pmatrix}$
		\STATE $\begin{pmatrix} A(j,j:\ell)\cr B(1,j:\ell)\end{pmatrix} := \mathcal{G} \begin{pmatrix} A(j,j:\ell)\cr B(1,j:\ell)\end{pmatrix} $; 
		\ENDFOR
	\end{algorithmic}
\end{algorithm}
The operations in Algorithm \ref{zd:ALG:Householder-Givens-QR} are illustrated in the scheme (\ref{eq:HG-QR-scheme}). 
\begin{eqnarray}
	&&
\left( \begin{smallmatrix} * & * & * & * \cr
0 & * & * & * \cr
0 & 0 & * & * \cr
0 & 0 & 0 & * \cr \hline \hline
{\star} & \star & \star & \star \cr
0 & \star & \star & \star \cr
0 & 0 & \star & \star \cr
0 & 0 & 0 & \star \cr
\end{smallmatrix}\right) \stackrel{1,1}{\longrightarrow}
\left( \begin{smallmatrix} 
\bullet & * & * & * \cr
0 & * & * & * \cr
0 & 0 & * & * \cr
0 & 0 & 0 & * \cr\hline \hline
\mathbf{0} & \star & \star & \star \cr
0 & \star & \star & \star \cr
0 & 0 & \star & \star \cr
0 & 0 & 0 & \star \cr
\end{smallmatrix}\right) \stackrel{1:2,2}{\Longrightarrow}
\left( \begin{smallmatrix} 
\bullet & * & * & * \cr
0 & * & * & * \cr
0 & 0 & * & * \cr
0 & 0 & 0 & * \cr\hline \hline
0 & \divideontimes & \star & \star \cr
0 & 0 & \star & \star \cr
0 & 0 & \star & \star \cr
0 & 0 & 0 & \star \cr
\end{smallmatrix}\right) \stackrel{1,2}{\longrightarrow}
\left( \begin{smallmatrix} 
\bullet & * & * & * \cr
0 & \bullet & * & * \cr
0 & 0 & * & * \cr
0 & 0 & 0 & * \cr\hline \hline
0 & 0 & \star & \star \cr
0 & 0 & \star & \star \cr
0 & 0 & \star & \star \cr
0 & 0 & 0 & \star \cr
\end{smallmatrix}\right) \stackrel{1:3,3}{\Longrightarrow}
\left( \begin{smallmatrix} 
\bullet & * & * & * \cr
0 & \bullet & * & * \cr
0 & 0 & * & * \cr
0 & 0 & 0 & * \cr\hline \hline
0 & 0 & \divideontimes & \star \cr
0 & 0 & 0 & \star \cr
0 & 0 & 0 & \star \cr
0 & 0 & 0 & \star \cr
\end{smallmatrix}\right)  \stackrel{1,3}{\longrightarrow}
\left( \begin{smallmatrix} 
\bullet & * & * & * \cr
0 & \bullet & * & * \cr
0 & 0 & \bullet & * \cr
0 & 0 & 0 & * \cr\hline \hline
0 & 0 & 0 & \star \cr
0 & 0 & 0 & \star \cr
0 & 0 & 0 & \star \cr
0 & 0 & 0 & \star \cr
\end{smallmatrix}\right) \stackrel{1:4,4}{\Longrightarrow} \nonumber \\
&& 
\left( \begin{smallmatrix} 
	\bullet & * & * & * \cr
	0 & \bullet & * & * \cr
	0 & 0 & \bullet & * \cr
	0 & 0 & 0 & * \cr\hline \hline
	0 & 0 & 0 & \divideontimes \cr
	0 & 0 & 0 & 0 \cr
	0 & 0 & 0 & 0 \cr
	0 & 0 & 0 & 0 \cr
\end{smallmatrix}\right)  \stackrel{1,4}{\longrightarrow}
\left( \begin{smallmatrix} 
	\bullet & * & * & * \cr
	0 & \bullet & * & * \cr
	0 & 0 & \bullet & * \cr
	0 & 0 & 0 & \bullet \cr\hline \hline
	0 & 0 & 0 & 0 \cr
	0 & 0 & 0 & 0 \cr
	0 & 0 & 0 & 0 \cr
	0 & 0 & 0 & 0 \cr
\end{smallmatrix}\right). \;\;\;\; 
\boxed{\mbox{Legend}:\ \begin{array}{l} 
 \stackrel{1,j}{\longrightarrow} \mbox{Givens rotation; Line 9.} \cr
 \stackrel{1:j,j}{\Longrightarrow} \mbox{Householder reflector; Lines 3--8.}
 \end{array}} \label{eq:HG-QR-scheme}
\end{eqnarray}
The above scheme is now a starting point for a block-oriented algorithm that delivers true high performance computation. Suppose our matrices are block partitioned with block size $b$ so that the total number of blocks is $\wp = \lceil \ell/b \rceil$; the leading $\wp-1$ diagonal blocks are $b\times b$, and the size of the last block is $(\ell - (\wp -1) b) \times (\ell - (\wp -1) b)$. Then we can simply imagine that e.g. in (\ref{eq:HG-QR-scheme}) each $*, \star, \bullet, \divideontimes, 0$ represents a $b\times b$ matrix (instead of being a scalar); the blocks in the last row and column may have one or both dimensions $(\ell - (\wp -1) b)$. For such a block partition, we use the notation $A[i,j]$, $B[i,j]$ to denote the submatrices (blocks) at the position $(i,j)$, and $A[i_1:i_2,j_1:j_2]$ is defined  analogously to the scalar case.

\begin{algorithm}[H]
	\caption{Block-oriented Householder+Givens QR factorization of the type (\ref{eq:QR-structured});}
	\label{zd:ALG:Householder-Givens-QR-blocks}
	\begin{algorithmic}[1]
		\REQUIRE Upper triangular matrices $A, B \in\C^{\ell\times \ell}$ and the block parameter $b$.
		\ENSURE The upper triangular factor in the QR factorization of $\begin{pmatrix} A\cr B\end{pmatrix} = \left( \begin{smallmatrix} * & * & * & * \cr
		0 & * & * & * \cr
		0 & 0 & * & * \cr
		0 & 0 & 0 & * \cr
		\star & \star & \star & \star \cr
		0 & \star & \star & \star \cr
		0 & 0 & \star & \star \cr
		0 & 0 & 0 & \star \cr
		\end{smallmatrix}\right)$
		%
		\STATE $\wp = \lceil \ell/b \rceil$; $b' = \ell - (\wp -1) b$. Introduce block partitions in $A$ and $B$.
		\FOR {$j=1:\wp$}
		\IF {$j>1$} 
		\STATE Compute the QR factorization $B[1:j,j]=\mathcal{H} \left( \begin{smallmatrix} \mathcal{R}\cr 0\end{smallmatrix}\right)$  of $B[1:j,j]$ as follows:
		\vspace{-2mm}
		\begin{itemize}
		\item[4.1:] The upper triangular factor $\mathcal{R}$ overwrites the leading submatrix of $B[1,j]$.
		\vspace{-2mm}	
		\item[4.2:] Write the accumulated product of  Householder reflectors $\mathcal{H}$ in the compact form $\mathcal{H}=\Id - \mathcal{W} \mathcal{T}\mathcal{W}^*$. (See Remark \ref{REM:block-reflector}.)
		\end{itemize}
		\vspace{-2mm}
		\IF {$j<\wp$}
		\STATE Update $B$: $B[1:j,j+1:\wp] = B[1:j,j+1:\wp] - \mathcal{W} \mathcal{T}^* (\mathcal{W}^* B[1:j,j+1:\wp])$
		\ENDIF
		\ENDIF
		\STATE Compute the QR factorization $\begin{pmatrix} A[j,j]\cr B[1,j]\end{pmatrix}= \mathcal{Q} \begin{pmatrix} \widehat{\mathcal{R}} \cr \0\end{pmatrix}$ so that the upper triangular factor $\widehat{\mathcal{R}}$ overwrites  $A[j,j]$. Use e.g. Algorithm \ref{zd:ALG:Householder-Givens-QR}.
        \IF {$j<\wp$}
		\STATE Update $A$ and $B$: $\begin{pmatrix} A[j,j+1:\wp]\cr B[1,j+1:\wp]\end{pmatrix} := \mathcal{Q}^* \begin{pmatrix} A[j,j+1:\wp]\cr B[1,j+1:\wp]\end{pmatrix} $; 
		\ENDIF
		\ENDFOR
	\end{algorithmic}
\end{algorithm}

\begin{remark}\label{REM:block-reflector}
{\em
In high performance libraries such as LAPACK, Householder reflectors are aggregated so that several of them can be applied more efficiently, with a better \emph{flop}-to-memory-reference ratio. If in Line 4 the matrix $B[1:j,j]$ has $b$ columns, then the QR factorization is achieved by a sequence of left multiplications $\mathcal{H}_b \cdots \mathcal{H}_2 \mathcal{H}_1 B[1:j,j]$, where $\mathcal{H}_i = \Id - \beta_i w_i w_i^*$ with $w_i(1:i-1)=\0$, $w_i(i)=1$. The compact form of $\mathcal{H} = \mathcal{H}_1 \mathcal{H}_2\cdots \mathcal{H}_b$ is then computed recursively as: $\mathcal{W}_1 = ( w_1 )$, $\mathcal{T}_1 = \beta_1$; $\mathcal{W}_j = ( \mathcal{W}_{j-1} \; w_j)$,
$$
\mathcal{T}_j = \begin{pmatrix} \mathcal{T}_{j-1} & -\beta_j \mathcal{T}_{j-1}\mathcal{W}_{j-1}^* w_j \cr \0 & \beta_j \end{pmatrix},\;\; j=2,\ldots, b .
$$	
For more details we refer to \cite{Schreiber-Van-Loan-WY-1989} and for a guidelines for an efficient implementation we suggest studying the structure of the subroutine \texttt{xGEQRF} in \texttt{LAPACK} -- essentially, the computation in Line 4 and Line 6 is already contained as a part of \texttt{xGEQRF}.
}	
\end{remark}
\subsubsection{The case of general $m$}\label{SS=ALG-general-m}
We now generalize the recursive scheme of  the Algorithm \ref{ALG:qr(S)-global}  to general dimension $m\neq 2^p$. 
First, introduce simple notation: $S$ will be considered as block-row partitioned with the $i$th block $S_{[i]}=R\Lambda^{i-1}$. The submatrix of $S$ consisting of consecutive blocks from the $i_1$th to the $i_2$th will be denoted by $S_{[i_1:i_2]}$. Note that 
$S_{[i_1:i_2]}=S_{[1:i_2-i_1+1]}\Lambda^{i_1-1}$.

As a motivation, note that the QR factorization of $S_{[1:16]}$ in the scheme (\ref{q:recursive:QR:S}) contains among its intermediate results the QR factorization also of e.g. $S_{[1:2]}$ (the factor is $T_1$), of $S_{[1:4]}$ (the factor is $T_2$), of $S_{[1:8]}$ (the factor is $T_3$), of $S_{[9:12]}$ (the factor is $T_2\Lambda^8$ as $S_{[9:12]}=S_{[1:4]}\Lambda^8$). Also, $T_i$ is the triangular factor of the leading $2^i$ block rows of $S$.

To exploit this, write $m$ as binary number $m\equiv \mathfrak{b}=(\mathfrak{b}_{\lfloor \log_2 m \rfloor},\ldots,\mathfrak{b}_1,\mathfrak{b}_0)_2$, i.e. 
\begin{equation}\label{eq:m-binary}
m  = \sum_{i=0}^{\lfloor \log_2 m \rfloor}\mathfrak{b}_i 2^i \equiv \sum_{j=1}^{j^*} 2^{i_j},\;\;\lfloor \log_2 m \rfloor=i_{j^*} > i_{j^*-1} > \cdots > i_2 > i_1 \geq 0 ,
\end{equation}
and introduce the block partition of $S$ as follows:
\begin{equation}\label{eq:S:block-partition}
S^T = \begin{pmatrix} S_{[1:2^{i_{j^*}}]}^T &  S_{[2^{i_{j^*}}+1:2^{i_{j^*}}+2^{i_{j^*-1}}]}^T & \ldots & 
S_{[2^{i_{j^*}}+\cdots+2^{i_2}+1 : 2^{i_{j^*}}+\cdots+2^{i_1}]}^T\end{pmatrix} .
\end{equation}
The QR factorization of the largest block $S_{[1:2^{i_{j^*}}]}$ can be computed by the $O(\lfloor \log_2 m \rfloor \ell^3)$ Algorithm \ref{ALG:qr(S)-global}, and the triangular factor of each of the subsequent blocks in (\ref{eq:S:block-partition}) is, up to column scaling by an appropriate power of $\Lambda$, available among the intermediate results, as discussed above. These are designated as \emph{local triangular factors}. This consists the first reduction step, that results in at most 
$\lfloor \log_2 m \rfloor +1$  local $\ell\times \ell$ upper triangular factors. 
In the second step, these are reduced, by building \emph{global triangular factors}, to a single triangular matrix at the cost of at most $O(\lfloor \log_2 m \rfloor \ell^3)$.

For an implementation of this procedure, it will be convenient to process the powers $2^{i_j}$ in an increasing order, i.e. to scan the binary representation $\mathfrak{b}$ from the right to the left. The local triangular factors of the blocks
\begin{equation}
S_{[1:2^{i_1}]}, S_{[2^{i_1}+1 : 2^{i_1}+2^{i_2}]}, S_{[2^{i_1}+2^{i_2}+1:2^{i_1}+2^{i_2}+2^{i_3}]}, \ldots, S_{[2^{i_1}+2^{i_2}+\cdots + 2^{i_{j^*-1}} + 1:2^{i_1}+2^{i_2}+2^{i_3}+\cdots + 2^{i_{j^*}}]}
\end{equation}
are computed by scaling the computed triangular factors of, respectively, 
\begin{equation}
S_{[1:2^{i_1}]}, S_{[1: 2^{i_2}]},  S_{[1: 2^{i_3}]}, \ldots , S_{[1: 2^{i_{j^*}}]}
\end{equation}
with, respectively,
\begin{equation}
\Lambda^0, \Lambda^{2^{i_1}}, \Lambda^{2^{i_1}+2^{i_2}}, \ldots , \Lambda^{2^{i_1}+2^{i_2}+\cdots + 2^{i_{j^*-1}}}
\end{equation}
and are built in into the global triangular factor by a sequence of updates. 
The procedure is summarized in Algorithm \ref{ALG:qr(S)-global-general-m}.

\begin{algorithm}[H]
	\caption{Recursive QR factorization of $S$ in (\ref{zd:eq:g-S*alpha})}
	\label{ALG:qr(S)-global-general-m}
	\begin{algorithmic}[1]
		\REQUIRE Upper triangular $R\in\C^{\ell\times\ell}$; diagonal $\bfLambda\in\C^{\ell\times\ell}$; number of snapshots $m$
		\ENSURE Upper triangular QR  factor $R_S=\mathbb{T}_{j-1}$ of $S$ in (\ref{zd:eq:g-S*alpha})
		\STATE Compute the binary representation (\ref{eq:m-binary}) of $m$: $m\equiv \mathfrak{b}=(\mathfrak{b}_{\lfloor \log_2 m \rfloor},\ldots,\mathfrak{b}_1,\mathfrak{b}_0)_2$ 
		\STATE Let $\lfloor \log_2 m \rfloor=i_{j^*} > i_{j^*-1} > \cdots > i_2 > i_1 \geq 0$ be as in (\ref{eq:m-binary})
		\STATE $T_0 = R$ 
		\IF{$i_1 = 0$}
		\STATE $\mathbb{T}_1 = T_0$; $j=2$; $\wp =1$
		\ELSE
		\STATE $\mathbb{T}_0=[]$; $j=1$; $\wp=0$
		\ENDIF
		\FOR {$k=1:i_{j^*}$} 
		\STATE $\begin{pmatrix} \boxed{T_k} \cr \0\end{pmatrix} = \texttt{qr}( \begin{pmatrix} \boxed{T_{k-1}} \cr T_{k-1}\bfLambda^{2^{k-1}} \end{pmatrix})$ \COMMENT{Local triangular factor. Use algorithms from \S \ref{SSS-QR(A;B)}.}
		\IF {$k=i_{j}$}
		\IF {$\mathbb{T}_{j-1} \neq []$}
		\STATE $\begin{pmatrix} \boxed{\mathbb{T}_j} \cr \0\end{pmatrix} = \texttt{qr}( \begin{pmatrix} \boxed{\mathbb{T}_{j-1}} \cr T_{k}\bfLambda^{\wp} \end{pmatrix})$ \COMMENT{Global triangular factor. Use algorithms from \S \ref{SSS-QR(A;B)}.}
		\ELSE 
		\STATE $\boxed{\mathbb{T}_j} = \boxed{T_k}$
		\ENDIF
		\STATE $j:= j+1$; $\wp := \wp + 2^k$
		\ENDIF
		\ENDFOR
		\STATE\COMMENT{For a simple Matlab implementation see \S \ref{APP:QR-Khatri-RAO-VTR}.}
	\end{algorithmic}
\end{algorithm}
\begin{remark}
	{\em
The above scheme can be easily adapted to work e.g. in base $3$, i.e. $m=3^p$ and, in general, using the representation of $m$ in base $3$. Such details/variations  become important for a custom made implementations on a  particular hardware.
}
\end{remark}
\subsection{Numerical stability}

For an efficient implementation, the computational scheme e.g. in Algorithm \ref{zd:ALG:Givens-QR} will be modified to enhance spatial and temporal locality of data, e.g. using tiling or blocking techniques as in Algorithm \ref{zd:ALG:Householder-Givens-QR-blocks}, or parallelized for multicore hardware.
It can be shown that with any such modification, the above computation is backward stable in the sense that the computed triangular factor is an exact factor of $S+\delta S$, where the backward error $\delta S$ is column-wise small, i.e. 
\begin{equation}\label{eq:dS(:,j)}
\|\delta S(:,j)\|_2 \leq \eta  \|S(:,j)\|_2, \;\;j=1,\ldots, \ell; \;\;\eta\leq f(\ell,m)\roff,
\end{equation}  
where $f(\ell,m)$ is modest polynomial that depends on the details of a particular implementation.\footnote{Note that (\ref{eq:dS(:,j)}) is much stronger statement than the usually used backward error bound $\|\delta S\|_F \leq g(\ell,m) \roff \|S\|_F$.}

This follows from the simple fact that in Algorithms \ref{ALG:qr(S)-global} and \ref{ALG:qr(S)-global-general-m} (using Algorithm \ref{zd:ALG:Givens-QR} with any ordering of Givens rotations, or Algorithms \ref{zd:ALG:Householder-Givens-QR} and \ref{zd:ALG:Householder-Givens-QR-blocks} as a kernel computational routine) we actually multiply the initial $S$ from the left by a sequence of elementary unitary matrices -- this is nothing else but independent unitary transformations (backward stable) of the columns of $S$. Hence, each column of $S$ has backward error that is small relative to that same column; in this way even the tiniest columns are preserved, independent of the remaining possibly much larger ones.

Due to (\ref{eq:dS(:,j)}), the corresponding  condition number that determines the accuracy of the decomposition is $\kappa_2(S_c)$, where $S_c$ is obtained from $S$ by scaling its columns so that $\|S_c(:,j)\|_2=1$ for all $j$. 
More precisely, if $\widetilde{R}_S = R_S + \delta R_S$ is the computed factor, then $\delta R_S=\Gamma R_S$ (i.e. $\delta R_S(:,j)=\Gamma(:,1:j) R_S(:,j)$, for all $j$) and, following \cite[\S 6.]{drmac-bujanovic-2008},
$$
\|\Gamma\|_F \leq \frac{\sqrt{ 8\ell}\eta}{1-\eta} \|S_c^{\dagger}\|_2 + O((\eta \|S_c^{\dagger}\|_2)^2) \leq \sqrt{ 8\ell}\eta \kappa_2(S_c) + O((\eta \|S_c^{\dagger}\|_2)^2).
$$
In terms of the initial data, the condition number of column-equilibrated $S$ is estimated as follows.
\begin{corollary}\label{COR=kappa(Sc)}
	 With the notation of Corollary \ref{COR:TM:Cond:A*B}, it holds that 
\begin{equation}\label{eq:kappa(Sc)}
	\kappa_2(S_c) = \sqrt{\kappa_2(C_s)}\leq \min ( \kappa_2(R_c), \kappa_2((\Vanderm_{\ell, m})_{r})) \leq \sqrt{\ell} \min ( \min_{D=\diag}\kappa_2(RD), \min_{D=\diag}\kappa_2(D \Vanderm_{\ell, m})).
\end{equation}
	\end{corollary}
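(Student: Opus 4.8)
The plan is to chain together three observations: that the column-equilibration of $S$ is exactly the symmetric diagonal scaling turning $C=S^*S$ into $C_s$; the Hadamard-product bound of Corollary \ref{COR:TM:Cond:A*B}; and the Van der Sluis estimate of Theorem \ref{TM:VanDerSluis} applied separately to $R^*R$ and to $\Vanderm_{\ell,m}\Vanderm_{\ell,m}^*$.

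First I would establish the identity $\kappa_2(S_c)=\sqrt{\kappa_2(C_s)}$. Writing $D_S=\diag(\|S(:,j)\|_2)_{j=1}^{\ell}$, we have $S_c=S D_S^{-1}$ and hence $S_c^* S_c = D_S^{-1}(S^*S)D_S^{-1}=D_S^{-1}C D_S^{-1}$. The crucial point is that $\|S(:,j)\|_2^2=(S^*S)_{jj}=c_{jj}$, so $D_S=D_C=\diag(\sqrt{c_{jj}})$ and therefore $S_c^* S_c=D_C^{-1}C D_C^{-1}=C_s$. Since $S_c$ has full column rank, its squared singular values are the eigenvalues of $C_s$, giving $\kappa_2(S_c)=\sqrt{\kappa_2(S_c^*S_c)}=\sqrt{\kappa_2(C_s)}$, which is the stated equality.

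For the middle inequality, specialising Corollary \ref{COR:TM:Cond:A*B} to $\WW=\Id_m$ (so that $A_s=R_c^*R_c$ and $B_s=(\Vanderm_{\ell,m})_r(\Vanderm_{\ell,m})_r^*$) yields $\kappa_2(C_s)\leq \min(\kappa_2(R_c)^2,\kappa_2((\Vanderm_{\ell,m})_r)^2)$, where I use that for a full-rank factor the Gram matrix squares the condition number, e.g. $\kappa_2(R_c^*R_c)=\kappa_2(R_c)^2$. Taking the monotone square root through the minimum gives $\sqrt{\kappa_2(C_s)}\leq \min(\kappa_2(R_c),\kappa_2((\Vanderm_{\ell,m})_r))$.

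Finally, for the outer inequality I would invoke Van der Sluis separately on each factor. Since $R_c^*R_c=A_s$ is the unit-diagonal scaling of the $\ell\times\ell$ positive definite matrix $R^*R$, Theorem \ref{TM:VanDerSluis} gives $\kappa_2(R_c)^2=\kappa_2(A_s)\leq \ell\,\min_{D=\diag}\kappa_2(D R^*R D)=\ell\,\min_{D=\diag}\kappa_2(RD)^2$, the last equality using $D R^*R D=(RD)^*(RD)$ for real diagonal $D$; a square root gives $\kappa_2(R_c)\leq\sqrt{\ell}\,\min_{D}\kappa_2(RD)$. The identical argument applied to $B=\Vanderm_{\ell,m}\Vanderm_{\ell,m}^*$, whose unit-diagonal scaling is $B_s=(\Vanderm_{\ell,m})_r(\Vanderm_{\ell,m})_r^*$ and for which the element-wise conjugation is immaterial to all condition numbers, gives $\kappa_2((\Vanderm_{\ell,m})_r)\leq\sqrt{\ell}\,\min_{D}\kappa_2(D\Vanderm_{\ell,m})$. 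Combining the three displays produces \eqref{eq:kappa(Sc)}. The only point requiring care is the bookkeeping that these distinct normalizations coincide and that the diagonal scalings are placed on the correct side of the Gram products, so that $\kappa_2$ and $\sqrt{\kappa_2}$ line up and the identities $D R^*R D=(RD)^*(RD)$ and $D\Vanderm_{\ell,m}\Vanderm_{\ell,m}^* D=(D\Vanderm_{\ell,m})(D\Vanderm_{\ell,m})^*$ hold; no analytic difficulty arises beyond this verification.
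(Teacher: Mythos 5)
Your proof is correct and follows essentially the same route as the paper: the identity $C_s=S_c^*S_c$ (hence $\kappa_2(S_c)=\sqrt{\kappa_2(C_s)}$) from the Khatri--Rao structure, the middle inequality from Corollary \ref{COR:TM:Cond:A*B}, and the outer inequality from Van der Sluis' theorem applied to each Gram factor. You merely fill in the bookkeeping (matching of the diagonal scalings, placement of $D$, irrelevance of the entrywise conjugation) that the paper's two-sentence proof leaves implicit.
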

\begin{proof}
It follows from Proposition \ref{PROP:S=KR} that $C_s = S_c^* S_c$; hence 
$\kappa_2(S_c)=\sqrt{\kappa_2(C_s)}$, and the first inequality follows from Corollary \ref{COR:TM:Cond:A*B}. The second inequality follows from the classical result \cite{slu-69}.	
	\end{proof}	
This  can be used to estimate the accuracy of Line 8 in Algorithm \ref{zd:ALG:Decision-Tree}: if $\min ( \kappa_2(R_c), \kappa_2((\Vanderm_{\ell, m})_{r}))$ in (\ref{eq:kappa(Sc)}) is below an appropriate threshold, certain level of accuracy can be guaranteed a priori.

\subsubsection{Importance of pivoting}\label{SSS=Pivoting-important}

For better accuracy of the solutions of triangular equations (forward and backward substitutions in (\ref{eq:alpha:new})), it would be advantageous to have column pivoted (rank revealing) QR factorization of $S$, i.e. that $R_S$ has strong diagonal dominance. Further, if $S$ is ill-conditioned, then the rank revealing QR factorization can be used to determine the numerical rank of $S$ and, by truncating $R_S$, to compute an approximate LS solution with certain level of sparsity.

Let $\mathrm{rank}(S)=r_S < \ell$, so that in the column pivoted QR factorization
\begin{equation}\label{eq:QRCP(S)}
S P = Q_S R_S = \begin{pmatrix} R_{[11]} & R_{[12]}\cr
\0 & \0\end{pmatrix} = Q_{S,r}\begin{pmatrix} R_{[11]} & R_{[12]}\end{pmatrix} , \;\;Q_{S,r} = Q_S(:,1:r).
\end{equation}	
Then the least squares problem $\|S\vec{\bfalpha}-\vec{\g}\|_2\rightarrow\min$ can be written as
\begin{eqnarray*}
	\|S \vec{\bfalpha}-\vec{\g}\|_2^2 &\!=\!& \| Q_{S,r} \begin{pmatrix} R_{[11]} & R_{[12]}\end{pmatrix}P^T \vec{\bfalpha} - Q_{S,r} Q_{S,r}^* \vec{\g} - (\Id -Q_{S,r} Q_{S,r}^*)\vec{\g}\|_2^2 \\
	&\!=\!& \| \begin{pmatrix} R_{[11]} & R_{[12]}\end{pmatrix} P^T \vec{\bfalpha}
	- Q_{S,r}^* \vec{\g}\|_2^2 + \| (\Id -Q_{S,r} Q_{S,r}^*)\vec{\g}\|_2^2\rightarrow\min,
\end{eqnarray*}
and the problem reduces to
\begin{equation}\label{eq:LS:reduced}
\| \begin{pmatrix} R_{[11]} & R_{[12]}\end{pmatrix} P^T \vec{\bfalpha}
- Q_{S,r}^* \vec{\g}\|_2 \longrightarrow \min_{\vec{\bfalpha}}.
\end{equation}
One particular vector in the solution manifold is 
\begin{equation}\label{eq:alpha:sparse}
\vec{\bfalpha}_{\diamond} = P \begin{pmatrix}R_{[11]}^{-1} Q_{S,r}^* \vec{\g} \cr \0\end{pmatrix}.
\end{equation} 
Note that $\vec{\bfalpha}_{\diamond}$ has at least $\ell-r_S$ zero entries, and that it is different from the shortest solution $\vec{\bfalpha}_{\ast}=S^{\dagger}\vec{\g}$. Hence, if the additional criterion is sparsity, the rank deficient least squares problem is best solved by (\ref{eq:QRCP(S)}, \ref{eq:alpha:sparse}). In the full rank case $\vec{\bfalpha}_{\diamond}=\vec{\bfalpha}_{\ast}$.
\begin{remark}
{\em
	Note that (\ref{eq:LS:reduced}) is under-determined and that we can add sparsity constraint analogously to (\ref{eq:LS-sparse-constraint}), which is accordance with Remark \ref{REM:DMDSP-Compr-Sens}. The sparsity of the explicit solution (\ref{eq:alpha:sparse}) is a good starting point for a quest for sparse solution.
}	
\end{remark}
\begin{remark}
	{\em
The Matlab backslash operator (e.g. \texttt{alpha=S\textbackslash g}) solves least squares problems using (\ref{eq:QRCP(S)}, \ref{eq:alpha:sparse}); the same procedure is used in \texttt{xGELSX} and \texttt{xGELSY} least squares solvers in \textsf{LAPACK}.
The pivoted QR factorization is computed by the \textsf{LAPACK} subroutine \texttt{xGEQP3}	(or \texttt{xGEQPF}). If sparsity is a desirable property of the solution in rank deficient case, then (\ref{eq:QRCP(S)}, \ref{eq:alpha:sparse}) should be preferred over using the Moore--Penrose pseudoinverse (e.g. \texttt{alpha=pinv(S)*g}, in Matlab).
	}
\end{remark}

The numerical rank $\widetilde{r}_S$ of $S$ is detected as follows: The column pivoted QR factorization computes
$SP=Q_S R_S$
and finds the smallest index $\widetilde{r}_S$ such that $|(R_S)_{\widetilde{r}_S+1,\widetilde{r}_S+1}|\leq \xi |(R_S)_{\widetilde{r}_S,\widetilde{r}_S}|$
where the threshold $\xi$ is usually $O(\ell\roff)$. Then, in the block partition 
\begin{equation}\label{eq:QRCP(S)-1}
S P = Q_S R_S = \begin{pmatrix} R_{[11]} & R_{[12]}\cr
\0 & R_{[22]}\end{pmatrix},\;\;R_{[11]}\in \C^{\widetilde{r}_S \times \widetilde{r}_S}
\end{equation}
it holds that
$\|R_{[22]}\|_F \leq \sqrt{\ell-\widetilde{r}_S} \xi |R_{\widetilde{r}_S,\widetilde{r}_S}|$, and $R_{[22]}$ it is set to zero, thus yielding a rank revealing decomposition of the form (\ref{eq:QRCP(S)}). The truncation of $R_{[22]}$ is justified by a backward perturbation of $S$. The choice of the threshold $\xi$ can be determined by taking into account the noise level on input, or it can be used to aggressively enforce low numerical rank (by allowing larger backward error) to obtain faster solver, or sparser least squares solution.

Indubitably, a QR factorization last squares algorithm should use pivoting, and it remains to see how to mount the pivoting device in Algorithms \ref{ALG:qr(S)-global} and \ref{ALG:qr(S)-global-general-m}.
The simplest way is to take the final upper triangular matrix $R_S$ on exit,  and recompute its QR factorization wit the Businger-Golub \cite{bus-gol-65} column pivoting; the more elegant one is to build the pivoting in the algorithm. It can be easily seen than the pivoting can be turned on at any (or every) stage in both algorithms; the permutations are accumulated/composed and pushed backward in the original matrix $S$. If the overhead due to pivoting at all stages is not acceptable, then we can settle with only the last step, when computing the last upper triangular matrix. 

\subsection{On real data and closedness under complex conjugacy}\label{SSS=Real-data-intro} 
If the data $\f_i$ in (\ref{eq:reconstruct-ell}) and the operator $\A$ are real, it is desirable that the reconstruction ansatz is a priori structurally real, even if the selected Ritz pairs $(z_j,\lambda_j)$ are in general complex. 
Further, in the real case, it is desirable to do the entire computation in real arithmetic, which would substantially improve the performances of the software. 

Keeping real arithmetic for real input (e.g. real matrices) and complex output that is a priori known to have complex conjugacy symmetry (eigenvalues and eigenvectors of real matrices) has important benefits with respect to numerical robustness (structure preserving that is  important from the point of view of the perturbation theory) and computational efficiency (real data structures and real arithmetic). These are exploited in the state of the art software for matrix computations.
So, for instance, in the \textsf{LAPACK} library, the driver routines for computing eigenvalues and eigenvectors $\texttt{xGEEV}$, $\texttt{xGEEVX}$ ($\texttt{x}\in\{ \texttt{S}, \texttt{D}\}$ for single and double precision real matrices) use only real arithmetic, and complex eigenvalues and eigenvectors are returned as ordered pairs of their real and imaginary parts. 

Hence, in a high performance \textsf{LAPACK}-based implementation of a minimizer for (\ref{eq:Omega:f-ZDa}), the computation with $(\lambda_j,z_j)$ should be in terms of $(\Re(\lambda_j),\Im(\lambda_j); \Re(z_j),\Im(z_j))$. Here $\Re(\cdot)$ and $\Im(\cdot)$ denote, respectively, the real and the imaginary part of a scalar, vector or matrix.

\subsubsection{Real reconstruction scheme of real data}
The following technical proposition provides all details needed for obtaining in real arithmetic a LS solution closed under complex conjugation, and, consequently, real approximants to the snapshots $\f_i$. 
\begin{proposition}
	Let all $\f_i$'s be real, i.e. $\f_i\in\R^n$, $\A\in\R^{n\times n}$.	
	If the wizard has selected the Ritz pairs so that with each complex pair $(z_j,\lambda_j)$ the sum on the right-hand side of (\ref{eq:reconstruct-ell}) contains also the contribution of its conjugate $(\overline{z_j},\overline{\lambda_j})$, then the corresponding coefficients are $\alpha_j$ and $\overline{\alpha_j}$, respectively. If $(z_j,\lambda_j)$ is real, then $\alpha_j$ is real as well. As a result, the computed approximation is real.
\end{proposition}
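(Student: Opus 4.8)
The plan is to exploit the uniqueness of the minimizer guaranteed by Theorem~\ref{TM:weighted_LS} together with a conjugation symmetry of the objective \eqref{eq:rec-error-min}. First I would encode the conjugate pairing of the selected modes by an involution $P=P^{-1}$ on $\{1,\dots,\ell\}$: since $\A$ is real, the selected Ritz pairs come in conjugate pairs, so there is such a $P$ with $\lambda_{P(j)}=\overline{\lambda_j}$ and $z_{P(j)}=\overline{z_j}$, while real modes are left fixed ($P(j)=j$, with $\lambda_j,z_j$ real). Here I rely on the standard convention (as in the \textsf{LAPACK} routines discussed in \S\ref{SSS=Real-data-intro}) that eigenvectors of a real matrix are returned in exact complex-conjugate pairs.

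Next I would introduce the conjugate-permute operation on coefficient vectors, $\alpha_j' := \overline{\alpha_{P(j)}}$, and show that it acts on the reconstruction as entrywise complex conjugation. Re-indexing by $k=P(j)$ and using that $P$ is an involution,
\begin{equation*}
\sum_{j=1}^\ell z_j \alpha_j' \lambda_j^{i-1}
= \sum_{k=1}^\ell z_{P(k)} \overline{\alpha_k} \lambda_{P(k)}^{i-1}
= \sum_{k=1}^\ell \overline{z_k}\,\overline{\alpha_k}\,\overline{\lambda_k}^{\,i-1}
= \overline{\sum_{k=1}^\ell z_k \alpha_k \lambda_k^{i-1}} .
\end{equation*}
Since each $\f_i$ is real and the weights $\ww_i$ are real, and the Euclidean norm is invariant under entrywise conjugation, it follows that $\Omega^2(\vec{\bfalpha}')=\Omega^2(\vec{\bfalpha})$ for every $\vec{\bfalpha}$.

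With this symmetry in hand the argument closes quickly. Applying it to the unique minimizer $\vec{\bfalpha}$ of Theorem~\ref{TM:weighted_LS} shows that $\vec{\bfalpha}'$ is also a minimizer, so uniqueness forces $\vec{\bfalpha}'=\vec{\bfalpha}$, i.e. $\alpha_{P(j)}=\overline{\alpha_j}$ for all $j$. For a genuinely complex pair $(j,P(j))$ this gives $\alpha_{P(j)}=\overline{\alpha_j}$ as claimed, and for a real mode $P(j)=j$ it gives $\alpha_j=\overline{\alpha_j}$, i.e. $\alpha_j\in\R$. Finally, the displayed identity with $\vec{\bfalpha}'=\vec{\bfalpha}$ reads $\overline{\sum_j z_j\alpha_j\lambda_j^{i-1}}=\sum_j z_j\alpha_j\lambda_j^{i-1}$, so each reconstructed snapshot equals its own conjugate and is therefore real.

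I expect the only genuine obstacle to be the well-definedness of the pairing $P$ --- specifically, verifying that the selected Ritz vectors really satisfy $z_{P(j)}=\overline{z_j}$ exactly, rather than only up to a unimodular phase. Under the stated hypothesis that the wizard selects modes closed under conjugation, together with the conjugate-pair normalization of eigenvectors of real matrices, this holds; the remainder is then the routine conjugation-and-uniqueness bookkeeping sketched above.
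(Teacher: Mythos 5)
Your proof is correct, but it follows a genuinely different route from the paper's. You establish the conjugate symmetry of the minimizer abstractly: the objective \eqref{eq:rec-error-min} is invariant under the antilinear involution $\alpha_j\mapsto\overline{\alpha_{P(j)}}$ (because the reconstruction transforms by entrywise conjugation and the $\f_i$, $\ww_i$ are real), and uniqueness of the least squares minimizer then forces the minimizer to be a fixed point of that involution. The paper instead argues constructively: it builds the explicit real change of basis $\Phi_\lambda$ (block-diagonal with $2\times 2$ blocks $\Phi=\frac12\left(\begin{smallmatrix}1&-\ii\cr 1&\ii\end{smallmatrix}\right)$ for each conjugate pair), which turns $Z_\ell$ into the real matrix \eqref{eq:Zl:real} and the diagonal powers $\bfLambda^{i-1}$ into the real block-diagonal matrices $\widehat{\bfLambda}^{i-1}$ of \eqref{eq:Lambda-i-real}; the LS problem in the new variable $\vec{\bfrho}=\Phi_\lambda^{-1}\vec{\bfalpha}$ is then entirely real, so its (unique) solution is real, and the conjugate structure \eqref{eq:cc-alpha} of $\vec{\bfalpha}=\Phi_\lambda\vec{\bfrho}$ is read off directly. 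Both arguments rest on the same uniqueness hypothesis (full column rank of the LS matrix, as in Theorem~\ref{TM:weighted_LS}); your version is shorter and more elementary, while the paper's version buys the explicit real parametrization that the rest of \S\ref{SSS=Real-data-intro} needs in order to carry out the whole computation in real arithmetic --- the proposition there is really a by-product of setting up that real scheme. Your closing caveat about the Ritz vectors being exact conjugates rather than conjugates up to a unimodular phase is well taken and is implicitly absorbed into the paper's hypothesis as well.
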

\begin{proof}
	Assume that the eigenvalues are so ordered that $\lambda_1,\ldots,\lambda_{\ell_1}$  are
	purely real, and the remaining complex eigenvalues are listed in groups of complex conjugate pairs $\lambda_j,
	\lambda_{j+1}=\overline{\lambda_j}$, with $\mathrm{Im}(\lambda_j)>0$. Let $\ell_2$ be the number of complex conjugate
	pairs. Hence $\ell=\ell_1 + 2\cdot \ell_2$, $\ell_1, \ell_2\geq 0$.
	
	Consider now a complex conjugate pair $(\lambda_j,z_j)$, $(\lambda_{j+1},z_{j+1})\equiv (\overline{\lambda_j},\overline{z_j})$. Since ${z}_j$ and $\overline{{z}_j}$ are linearly
	independent, the contribution of the directions of ${z}_j$ and
	${z}_{j+1}$ to  can be replaced by the span of the two purely
	real vectors $\Re({z}_j)$ and
	$\Im({z}_j)$. Let
	$$
	\Phi = \frac{1}{2}\begin{pmatrix}1 &  -\ii \cr 1 &
	\ii \end{pmatrix},\;\;\mbox{with}\;\; \Phi^{-1} = \begin{pmatrix} 1
	& 1 \cr \ii & -\ii \end{pmatrix}.
	$$
	Then $\begin{pmatrix} \Re({z}_j)\!\! &\!\!
	\Im({z}_j)\end{pmatrix} =
	\begin{pmatrix} {z}_j\!\! &\!\! {z}_{j+1} \end{pmatrix}\Phi$.
	Note that $\sqrt{2}\Phi$ is unitary and that for $j=\ell_1+1,\ell_1+3, \ldots, \ell-1$
	$$
	\Phi^{-1} \begin{pmatrix} \lambda_j & 0 \cr 0 & \overline{\lambda_j}\end{pmatrix} \Phi
	= \begin{pmatrix} \Re(\lambda_j) & \Im(\lambda_j) \cr
	-\Im(\lambda_j) & \Re(\lambda_j) \end{pmatrix} \equiv \hat{\Lambda}_j .
	\;\;\mbox{(Here $\Im(\lambda_j)>0$.)}
	$$
	Define
	block--diagonal matrix $\Phi_\lambda$ with unit diagonal $1\times 1$
	blocks $(1)$ for each real Ritz value $\lambda_j$ and $2\times 2$ matrix
	$\Phi$ for each complex conjugate pair $\lambda_j,\lambda_{j+1}$.
	Note that $Z_{\ell}\Phi_\lambda$ is now real matrix, 
	\begin{equation}\label{eq:Zl:real}
	Z_{\ell}\Phi_\lambda = \begin{pmatrix} z_1, &\ldots ,& z_{\ell_1}, & \Re(z_{\ell_1+1}), & \Im(z_{\ell_1+1}), & \ldots ,& \Re(z_{\ell-1}), & \Im(z_{\ell-1})\end{pmatrix} 
	\end{equation}
	and the real versions of the powers of $\bfLambda$ contain $2\times 2$ blocks for each pair $\lambda_j, \lambda_{j+1}=\overline{\lambda_j}$,
	\begin{equation}\label{eq:Lambda-i-real}
	\Phi_\lambda^{-1}\bfLambda^{i-1}\Phi_{\lambda} = \left[\bigoplus_{\mathrm{Im}(\lambda_j)=0}(\lambda_j^{i-1})\right] \bigoplus \left[\bigoplus_{\mathrm{Im}(\lambda_j)>0}\hat\Lambda_j^{i-1}\right] =
	\left(\begin{smallmatrix}
	\bullet &         &         &              &   &  &  \cr
	& \bullet &         &              &   &  &  \cr
	&         & \bullet &              &   &  &  \cr 
	&         &         & \bullet      & \bullet  &  &  \cr
	&         &         &  \bullet            & \bullet  &  &  \cr
	&         &         &              &   &  \bullet & \bullet \cr
	&         &         &              &   & \bullet  &    \bullet              
	\end{smallmatrix}\right)
	%
	%
	\equiv \widehat{\bfLambda}^{i-1}\in\R^{\ell\times\ell} .
	\end{equation}
	When used in the objective function, these transformations yield an equivalent formula
	\begin{equation}\label{eq:LS:real:1}
	\| (\WW\otimes \Id_n) \left[\begin{pmatrix} \f_1 \cr \vdots \cr \f_m\end{pmatrix} - \begin{pmatrix} Z_{\ell}\Delta_{\Lambda_1} \cr \vdots \cr Z_{\ell}\Delta_{\Lambda_m}\end{pmatrix} \vec{\bfalpha}\right]\|_2 = 
	\| \begin{pmatrix} \ww_1\f_1 \cr \vdots \cr \ww_m\f_m\end{pmatrix} - \begin{pmatrix} \ww_1 (Z_{\ell}\Phi_\lambda)(\Phi_\lambda^{-1}\bfLambda^0\Phi_\lambda) \cr \vdots \cr \ww_m (Z_{\ell}\Phi_\lambda)(\Phi_\lambda^{-1}\bfLambda^{m-1}\Phi_\lambda)\end{pmatrix} (\underbrace{\Phi_\lambda^{-1}\vec{\bfalpha}}_{\vec{\bfrho}})\|_2 .
	\end{equation}
	If we introduce a change of variables in (\ref{eq:LS:real:1}) by letting 
	$\vec{\bfrho} = \Phi_\lambda^{-1}\vec{\bfalpha}$, then the optimal $\vec{\bfrho}$ in (\ref{eq:LS:real:1}) must be real, and then the optimal $\vec{\bfalpha}=\Phi_{\lambda}\vec{\bfrho}$ reads
	\begin{equation}\label{eq:cc-alpha}
	\vec{\bfalpha} = \begin{pmatrix} \rho_1 & \ldots & \rho_{\ell_1}, & \rho_{\ell_1+1}+\ii\rho_{\ell_1+2}, & \rho_{\ell_1+1}-\ii\rho_{\ell_1+2}, & \ldots ,& \rho_{\ell-1}+\ii\rho_{\ell}, & \rho_{\ell-1}-\ii\rho_{\ell}\end{pmatrix}^T .
	\end{equation}
\end{proof}
The difference between (\ref{eq:LS:real:1}) and the original formulation (\ref{eq:Omega:f-ZDa}) is in replacing the diagonal scaling matrices $\bfLambda^{i-1}$ with block diagonal matrices $$
\widehat{\bfLambda}^{i-1}=\Phi_\lambda^{-1}\bfLambda^{i-1}\Phi_\lambda \equiv (\Phi_\lambda^{-1}\bfLambda\Phi_\lambda)^{i-1}
$$ 
containing $1\times 1$ or $2\times 2$ diagonal blocks. Similar holds for (\ref{eq:Omega:g-RDa}), where now we have real QR factorization $Z_{\ell}\Phi_\lambda = Q\left(\begin{smallmatrix} R \cr 0\end{smallmatrix}\right)$.
Note that $Z_{\ell}\Phi_\lambda$ is of same dimensions as $Z_{\ell}$, but it is a real matrix, thus occupying half the storage needed for $Z_{\ell}$. In a software implementation based e.g. on \textsf{LAPACK},  $Z_{\ell}\Phi_\lambda$ will be actually computed in the form (\ref{eq:Zl:real}), so that this switching to real arithmetic is simple.

Needles to say, computing the QR factorization in real arithmetic is (estimated, but can be easily demonstrated in numerical experiments with sufficiently large dimensions) at least twice faster than in complex. Moreover, as discussed above, in a high performance computing environment, we initially have $Z_{\ell}\Phi_\lambda$, and not its complexification $Z_{\ell}$. After the real QR factorization of $Z_{\ell}\Phi_{\lambda}$, the steps (\ref{eq:Omega:g-RDa}), (\ref{eq:Omega(alpha)}) are performed in real arithmetic.

And, finally, the fact that the solution is guaranteed to have complex conjugacy structure (\ref{eq:cc-alpha}) analogous to the one of the selected eigenpairs, thus yielding real approximation, makes the final argument in favor of this formulation. 
	
\subsubsection{Algorithms of \S \ref{SS=QRF(S)} for real data}
After rewriting the steps (\ref{eq:Omega:f-ZDa} -- \ref{zd:eq:g-S*alpha})
over $\R$ using (\ref{eq:Lambda-i-real}),  (\ref{eq:LS:real:1}),
it remains to adapt the algorithms described in \S \ref{SS=QRF(S)}  to the case of real $R$ and with the block--diagonal matrices $\widehat{\bfLambda}^{i-1}$ instead of the diagonal $\bfLambda^{i-1}$. 

Clearly, the global structure of Algorithm \ref{ALG:qr(S)-global} and Algorithm \ref{ALG:qr(S)-global-general-m} remains unchanged. The difference is in the structure (\ref{eq:QR-structured}), which is changed as illustrated below (three real eigenvalues ($\bullet$) and two complex conjugate pairs ($\blacklozenge$)): 
$$
T_{i-1}\bfLambda^{2^{i-1}} = \left(\begin{smallmatrix}
\ast &  \ast   &  \ast   &  \ast &  \ast & \ast & \ast \cr
     &  \ast   &  \ast   &  \ast &  \ast & \ast & \ast \cr
     &         &  \ast   &  \ast &  \ast & \ast & \ast \cr 
     &         &         &  \ast &  \ast & \ast & \ast \cr
     &         &         &       &  \ast & \ast & \ast \cr
     &         &         &       &       & \ast & \ast \cr
     &         &         &       &        &     & \ast
\end{smallmatrix}\right)
\left(\begin{smallmatrix}
\bullet &         &         &              &   &  &  \cr
        & \bullet &         &              &   &  &  \cr
        &         & \bullet &              &   &  &  \cr 
        &         &         & \blacklozenge     & \blacklozenge  &  &  \cr
        &         &         &  \blacklozenge    & \blacklozenge  &  &  \cr
        &         &         &              &   &  \blacklozenge & \blacklozenge \cr
        &         &         &              &   & \blacklozenge  &  \blacklozenge           
\end{smallmatrix}\right) = 
\left(\begin{smallmatrix}
\ast &  \ast   &  \ast   &  \ast &  \ast & \ast & \ast \cr
&  \ast   &  \ast   &  \ast &  \ast & \ast & \ast \cr
&         &  \ast   &  \ast &  \ast & \ast & \ast \cr 
&         &         &  \ast &  \ast & \ast & \ast \cr
&         &         &  \bigstar     &  \ast & \ast & \ast \cr
&         &         &       &       & \ast & \ast \cr
&         &         &       &        & \bigstar  & \ast
\end{smallmatrix}\right),\;\;\bullet, \ast, \blacklozenge, \bigstar \in\R.
$$
To put it simply, each complex conjugate pair of Ritz values creates a bulge ($\bigstar$).
Hence, before running algorithms described in \S \ref{SSS-QR(A;B)}, it suffices to apply Givens rotations to first annihilate the bulges ($\bigstar$), whose number equals the total number of complex conjugate pairs. It is easily seen that the rotations can be applied independently; there can be at most $\ell/2$ rotations, so the total cost of this correction is $O(\ell^2)$).
Such correction is needed only in the $(2,1)$ block, designated as $B$ in \S \ref{SSS-QR(A;B)}.

%

\section{Concluding Remarks}
In this paper we have provided a new firm numerical linear algebra framework for solving structured least squares problem that arises in applications in e.g. computational fluid dynamic and multistatic antenna array processing. The numerical analysis explains the accuracy and the limitations of the normal equations based solution. New algorithm, based on a structure exploiting QR factorization has been presented with detailed numerical analysis and implementation details that should be the starting point for a high performance implementations on modern computing platforms. In the Appendix \S \ref{S=Matlab-sources}, we list few sample codes in Matlab; a \textsf{LAPACK} based implementation of all presented algorithms is under development.
\section{Matlab source codes}\label{S=Matlab-sources}
For the reader's convenience, in this section we provide Matlab implementations of some of the algorithms described in the paper.
\subsection{QR\_Khatri\_Rao\_VTR}\label{APP:QR-Khatri-RAO-VTR}
\begin{algorithm}[H]
	\caption{QR factorization of the Khatri-Rao product $\Vanderm_{\ell,m}^T \odot R$}
	\begin{lstlisting}
function T = QR_Khatri_Rao_VTR_2p( R, Lambda, p )
% QR_Khatri_Rao_VTR_2p computes the upper triangular factor in the QR factorization 
% of the Khatri-Rao product S=Khatri_Rao(Vlm.',R), where R is an <ell x ell> 
% upper triangular matrix, and Vlm is an  <ell x m> Vandermonde matrix V, whose 
% columns are defined as V(:,i) = Lambda.^(i-1), i = 1,...,m, and m=2^p.
% This code is written only to illustrate the global recursive structure of the
% algorithm. It has not been optimized for run-time efficiency.
%...............................................................................
% Coded by Zlatko Drmac, Department of Mathematics, University of Zagreb.
% drmac@math.hr
% April 2018
%...............................................................................
% Input:
% R      < ell x ell > (upper triangular) real or complex matrix
% Lambda < ell x   1 >  real or complex vector, defines Vlm = Vandermonde marix
% p      integer >=0    defines m = 2^p
% Output:
% T      < ell x ell > upper triangular QR fator of Khatri_Rao(Vlm.',R)
% ..............................................................................
%
T = R ; D = Lambda ;
%
for i = 1 : p
    [~, T] = qr( [ T ; T*diag(D)], 0 ) ; 
    D = D.^2 ; 
end
end	
\end{lstlisting}
\end{algorithm}

\begin{algorithm}[H]
	\caption{QR factorization of the Khatri-Rao product $\Vanderm_{\ell,m}^T \odot R$}
\begin{lstlisting}
function [ T, b1 ] = QR_Khatri_Rao_VTR_2p_b( R, Lambda, p, b )
% QR_Khatri_Rao_VTR computes the upper triangular factor in the QR factorization 
% of the Khatri-Rao product S=Khatri_Rao(Vlm.',R), where R is an <ell x ell> 
% upper triangular matrix, and Vlm is an  <ell x m> Vandermonde matrix V, whose 
% columns are defined as V(:,i) = Lambda.^(i-1), i = 1,...,m, and m=2^p.
% This code is written only to illustrate the global recursive structure of the
% algorithm. It has not been optimized for run-time efficiency.
%...............................................................................
% Coded by Zlatko Drmac, Department of Mathematics, University of Zagreb.
% drmac@math.hr
% April 2018
%...............................................................................
% Input:
% R      < ell x ell > (upper triangular) real or complex matrix
% Lambda < ell x 1 >    real or complex vector, defines Vlm = Vandermonde marix
% p      integer >=0  defines m = 2^p
% Output:
% T      < ell x ell > upper triangular QR fator of Khatri_Rao(Vlm.',R)
% ..............................................................................
%
T = R ; D = Lambda ; ell = length(Lambda) ; b1 = b ; 
%
for i = 1 : p
[Q, T] = qr( [ T ; T*diag(D)], 0 ) ; D = D.^2 ; s = 2^(i-1) ; 
for j = 0 : 2^i : 2^p - 2^i     
k = j + s ; 
b1( j*ell+1 : (j+1)*ell ) = Q' * [ b1( j*ell+1 : (j+1)*ell ) ; ...
b1( k*ell+1 : (k+1)*ell ) ] ;
end
end
end	
\end{lstlisting}
\end{algorithm}

\begin{algorithm}[H]
	\caption{QR factorization of the Khatri-Rao product $\Vanderm_{\ell,m}^T \odot R$}
	\begin{lstlisting}
function TT = QR_Khatri_Rao_VTR( R, Lambda, m )
% QR_Khatri_Rao_VTR computes the upper triangular factor in the QR
% factorization of the Khatri-Rao product S=Khatri_Rao(Vlm.',R), where
% R is an <ell x ell> upper triangular matrix, and Vlm is an  <ell x m> 
% Vandermonde matrix V, whose columns are defined as V(:,i) = Lambda.^i, 
% i = 0,...,m-1.
% This code has been written to illustrate the global recursive structure of the
% computation. It has not been optimized for speed. 
%...............................................................................
% Coded by Zlatko Drmac, Department of Mathematics, University of Zagreb.
% drmac@math.hr
% June 2018
%...............................................................................
% Input:
% R      <ell x ell> (upper triangular) matrix
% Lambda <ell x 1  >  vector, defines Vlm = Vandermonde marix
% m      <  1 x 1  >  integer > 0  
%...............................................................................
% Output:
% T      <ell x ell> upper triangular QR fator of Khatri_Rao(Vlm.',R)
%...............................................................................
%
J = floor(log2(m))+1 ; B = zeros(1,J);
for k = J : -1 : 1
    B(k) = mod(m,2) ; m = (m-B(k))/2 ; 
end
% B = (m)_2 is the binary representation of m
B  = fliplr(B)   ; % reverse the string and process it later from the left on
BB = find(B) - 1 ; % these are the increasingly ordered powers of two that 
                   % correspond to nonzero binary digits in (m)_2
lBB = length(BB) ; % this is the number of nonzero binary digits in (m)_2
% initialize the local and the global triangular factors
T  = R  ; D  = Lambda ;  D0 = D ;
if BB(1) == 0
   TT = T  ; k0 = 2  ; pow_shift = 1 ; 
else
   TT = [] ; k0 = 1  ; pow_shift = 0 ; 
end
% run over the powers of two and assemble the global factor
for k = 1 : BB(lBB)  % the main loop runs up to the highest power
    [~, T] = qr( [ T ; T*diag(D)], 0 ) ; % this is a local triangular factor
                                         % built for each binary digit in (m)_2
    if ( k == BB(k0) )   
       % interrupt for an update of the global triangular factor 
       if ~isempty(TT) 
           [~, TT] = qr( [ TT ; T*diag(D0.^(pow_shift)) ], 0 ) ;
       else % avoid trivial computation 
           TT = T ;
       end
      k0 = k0 + 1 ; pow_shift = pow_shift + 2^k ;      
    end 
    D = D.^2 ; 
end
%
end
	\end{lstlisting}
\end{algorithm}

\subsection{SLS\_Khatri\_Rao\_NE}\label{APP:SLS_Khatri_Rao_NE}
\begin{algorithm}[H]
	\caption{Normal equations solution to the LS problem $\|(\Vanderm_{\ell,m}^T \odot R) x - \vec{\g} \|_2\longrightarrow\min$}
	\begin{lstlisting}
function x = SLS_Khatri_Rao_NE( R, Lambda, G )
% SLS_Khatri_Rao_NE uses the normal equations to solve structured least squares 
% problem || S * x - vec(G) ||_2 --> min, where S is the Khatri-Rao product,
% S=Khatri_Rao(Vlm.',R), R is an <ell x ell> upper triangular matrix, and 
% Vlm is an  <ell x m> Vandermonde matrix, whose columns are defined as 
% Vlm(:,i) = Lambda.^i, i = 0,...,m-1. Lambda is complex vector.
% The code is writen for clarity, not optimality.
%...............................................................................
% Coded by Zlatko Drmac, Department of Mathematics, University of Zagreb.
% drmac@math.hr
%...............................................................................
% Input:
% R      <ell x ell> (upper triangular) matrix
% Lambda <ell x 1>    vector, defines Vlm = Vandermonde marix
% G      <ell x m>    real or complex data
% Output:
% x      <ell x 1>   the solution of the LS problem.
%
[ell,m] = size(G) ; 
Vlm  = ones( ell, m ) ; 
for j = 2 : m 
Vlm(:,j) = Lambda.*Vlm(:,j-1) ; 
end
K = (R'*R).*conj(Vlm*Vlm')        ; 
b = (conj(Vlm).*(R'*G))*ones(m,1) ;
x = K \ b                         ; 
end
	\end{lstlisting}
\end{algorithm}

\subsection{SLS\_W\_Khatri\_Rao\_NE}\label{APP:SLS_Khatri_Rao_NE}
\begin{algorithm}[H]
	\caption{ Normal equations solution to the LS problem\\ 
		$\|(W\otimes I_{\ell})\left[(\Vanderm_{\ell,m}^T \odot R) x - \vec{\g}\right] \|_2\longrightarrow\min$ }
	\begin{lstlisting}
function x = SLS_W_Khatri_Rao_NE( R, Lambda, G, W )
% SLS_Khatri_Rao_NE uses the normal equations to solve structured least squares 
% problem || kron(diag(W),eye(ell))(S * x - vec(G)) ||_2 --> min, where S is 
% the Khatri-Rao product, S=Khatri_Rao(Vlm.',R), R is an <ell x ell> upper 
% triangular matrix, and Vlm is an  <ell x m> Vandermonde matrix, whose columns
% are defined as Vlm(:,i) = Lambda.^i, i = 0,...,m-1. Lambda is complex vector.
% The code is writen for clarity, not optimality.
%...............................................................................
% Coded by Zlatko Drmac, Department of Mathematics, University of Zagreb.
% drmac@math.hr
%...............................................................................
% Input:
% R      <ell x ell> (upper triangular) matrix
% Lambda <ell x 1>    vector, defines Vlm = Vandermonde marix
% G      <ell x m>    real or complex data
% W      <m x 1>      real vector of positive weights
% Output:
% x      <ell x 1>   the solution of the LS problem.
%
[ell,m] = size(G) ; 
Vlm  = ones( ell, m ) ; 
for j = 2 : m 
Vlm(:,j) = Lambda.*Vlm(:,j-1) ; 
end
K = (R'*R).*conj(Vlm*diag(W.^2)*Vlm')             ; 
b = (conj(Vlm*diag(W)).*(R'*G*diag(W)))*ones(m,1) ;
x = K \ b                                         ; 
end
\end{lstlisting}
\end{algorithm}

\subsection{SLS\_Spectral\_Reconstruct\_W\_NE}

\begin{algorithm}[H]
\caption{Normal equations solution to the LS problem\\ 
	$\|\left[
	(\f_1, \f_2, \ldots, \f_m ) -		
	 Z_{\ell} \Delta_{\bfalpha} 
	 ( \Lambda_1,  \Lambda_2, \ldots, \Lambda_{m})
  \right] \WW\|_F \longrightarrow\min$}
\begin{lstlisting}
function x = SLS_Spectral_Reconstruct_W_NE( Z, Lambda, F, W )
% SLS_Spectral_Reconstruct_W_NE uses the normal equations to solve structured 
% least squares problem 
% ||( F-Z*diag(x)*(Lambda^0,Lambda^1,...,Lambda^(m-1) )*diag(W)||_F --> min, 
% where Lambda and x are complex vectors, W is real vector with positive 
% entries, and F and Z are real or complex matrices.
% The code is writen for clarity, not optimality.
%...............................................................................
% Coded by Zlatko Drmac, Department of Mathematics, University of Zagreb.
% drmac@math.hr
% April 2018
%...............................................................................
% Input:
% Z      <n x ell>    real or complex matrix
% Lambda <ell x 1>    real or complex vector, defines Vlm = Vandermonde marix
% F      <ell x m>    real or complex data
% W      <m x 1>      real vector of positive weights 
% Output:
% x      <ell x 1>   the solution of the LS problem.
%
[Q,R] = qr(Z,0) ;
G     = Q'*F    ;
[ell,m] = size(G)     ; 
Vlm  = ones( ell, m ) ; 
for j = 2 : m 
   Vlm(:,j) = Lambda.*Vlm(:,j-1) ; 
end
if nargin == 3 
   K = (R'*R).*conj(Vlm*Vlm')        ; 
   b = (conj(Vlm).*(R'*G))*ones(m,1) ;   
else
   K = (R'*R).*conj(Vlm*diag(W.^2)*Vlm')             ; 
   b = (conj(Vlm*diag(W)).*(R'*G*diag(W)))*ones(m,1) ;
end
x = K \ b ; 
end
\end{lstlisting}
\end{algorithm}

\subsection{Software license}
The following license applies to the software in this section.
\begin{verbatim}
%===============================================================================
% Copyright (c) 2018 AIMdyn Inc.  
% All right reserved. 
% 
% 3-Clause BSD License
% 
% Additional copyrights may follow.
% 
% 
% Redistribution and use in source and binary forms, with or without  
% modification, are permitted provided that the following conditions are met:
% 
% 1. Redistributions of source code must retain the above copyright notice,  
%    this list of conditions and the following disclaimer.
% 2. Redistributions in binary form must reproduce the above copyright notice, 
%    this list of conditions and the following disclaimer in the documentation 
%    and/or other materials provided with the distribution.
% 3. Neither the name of the copyright holder nor the names of its contributors 
%    may be used to endorse or promote products derived from this software  
%    without specific prior written permission.
% 
% The copyright holder provides no reassurances that the source code
% provided does not infringe any patent, copyright, or any other
% intellectual property rights of third parties.  The copyright holder
% disclaims any liability to any recipient for claims brought against
% recipient by any third party for infringement of that parties
% intellectual property rights.
% 
% THIS SOFTWARE IS PROVIDED BY THE COPYRIGHT HOLDER AND CONTRIBUTORS "AS IS" AND 
% ANY EXPRESS OR IMPLIED WARRANTIES, INCLUDING, BUT NOT LIMITED TO, THE IMPLIED 
% WARRANTIES OF MERCHANTABILITY AND FITNESS FOR A PARTICULAR PURPOSE ARE 
% DISCLAIMED. IN NO EVENT SHALL THE COPYRIGHT HOLDER OR CONTRIBUTORS BE LIABLE 
% FOR ANY DIRECT, INDIRECT, INCIDENTAL, SPECIAL, EXEMPLARY, OR CONSEQUENTIAL 
% DAMAGES (INCLUDING, BUT NOT LIMITED TO, PROCUREMENT OF SUBSTITUTE GOODS OR 
% SERVICES; LOSS OF USE, DATA, OR PROFITS; OR BUSINESS INTERRUPTION) HOWEVER 
% CAUSED AND ON ANY THEORY OF LIABILITY, WHETHER IN CONTRACT, STRICT LIABILITY, 
% OR TORT (INCLUDING NEGLIGENCE OR OTHERWISE) ARISING IN ANY WAY OUT OF THE USE 
% OF THIS SOFTWARE, EVEN IF ADVISED OF THE POSSIBILITY OF SUCH DAMAGE.
% 
%==============================================================================
\end{verbatim}

\bibliography{SOURCES2/Reconstruction_references1}
\bibliographystyle{plain}

\end{document}